\documentclass[11pt]{amsart}

\usepackage{amsmath,amsfonts,amsthm,amssymb,color}
\usepackage[latin1]{inputenc}
\usepackage[all,cmtip]{xy}
\usepackage{fullpage}


\numberwithin{equation}{section}

\newtheorem{theorem}[equation]{Theorem}
\newtheorem{prop}[equation]{Proposition}
\newtheorem{lemma}[equation]{Lemma}
\newtheorem{cor}[equation]{Corollary}

\newtheorem{defn}[equation]{Definition}
\theoremstyle{remark}
\newtheorem{remark}[equation]{Remark}

\newcommand{\cN}{{\mathcal{N}}}
\newcommand{\cP}{{\mathcal{P}}}

\newcommand{\cO}{{\mathcal{O}}}

\newcommand{\N}{\mathbb{N}}
\newcommand{\bbH}{\mathbb{H}}
\newcommand{\bbS}{\mathbb{S}} 
\newcommand{\R}{\mathbb{R}}
\newcommand{\C}{\mathbb{C}}

\newcommand{\bbQ}{\mathbb{Q}}

\newcommand{\cL}{\mathcal{L}}

\newcommand{\reals}{{\bf R}}

\newcommand{\pa}{\partial}

\newcommand{\li}{{\rm li}}

\newcommand{\calO}{\mathcal{O}}

\newcommand{\gL}{\mathcal{L}}
\newcommand{\pgL}{\mathcal{L}_p} 

\def\abs#1{\lvert #1\rvert}

\newcommand{\norm}[1]{|\!|#1|\!|}
\def\BIgl({\mathopen{\hbox{\smaller${\biggl(}$}}}
\def\BIgr){\mathclose{\hbox{\smaller${\biggr)}$}}}
\def\BIglv|{\mathopen{\hbox{\smaller${\biggl|}$}}}
\def\BIgrv|{\mathclose{\hbox{\smaller${\biggr|}$}}}
\def\bil({\mathopen{\raise.25pt\hbox{\smaller${\bigl(}$}}}
\def\bir){\mathclose{\raise.25pt\hbox{\smaller${\bigr)}$}}}
\def\bilv|{\mathopen{\raise.25pt\hbox{\smaller${\bigl|}$}}}
\def\birv|{\mathclose{\raise.25pt\hbox{\smaller${\bigr|}$}}}


\renewcommand{\Re}{\mathop{\rm Re}}

\def\udstrutpt#1#2{\noindent\vbox to #1pt{}\noindent\lower#2pt\vbox{}\ignorespaces}

\def\zeroint_#1{\mathchoice
{\sideset{ ^{\hskip5pt 0\hskip-5pt}}{}\int\nolimits_{\hskip-5pt #1}}
{^{\hskip4pt 0\hskip-4pt}\int_{#1}} 
{\sideset{ ^{\hskip5pt 0\hskip-5pt}}{}\int}
{\sideset{ ^{\hskip5pt 0\hskip-5pt}}{}\int}
}

\def\smint{\raise.5pt\hbox{\smaller$\dsty \int$}}
{\makeatletter
\gdef\citemt[#1]#2{[\hyper@@link[cite]{}{cite.#2}{#1}]}
\gdef\citelcs[#1]#2{(loc.\ cit.)}
\gdef\citelc[#1]#2{(\hyper@@link[cite]{}{cite.#2}{loc.\ cit.})}
\gdef\citeib[#1]#2{[\hyper@@link[cite]{}{cite.#2}{ibidem}, #1]}
\gdef\citeibna#1{(\hyper@@link[cite]{}{cite.#1}{ibidem})}
}

\def\udstrutpt#1#2{\noindent\vbox to #1pt{}\noindent\lower#2pt\vbox{}\ignorespaces}

%
%
\renewcommand{\qed}{\hspace*{\fill} \setlength{\unitlength}{1mm}
\begin{picture}(2.5,2.5)
  \put(0,0){\framebox(2.5,2.5){}}
 \end{picture}
\setlength{\unitlength}{1pt}}

  \definecolor{pink}{rgb}{1,0,1}

\begin{document}
\title{Dynamics and zeta functions on conformally compact manifolds}

\author[Julie Rowlett]{Julie Rowlett}
\address{Max Planck Institut f\"ur Mathematik \\ Vivatgasse 7 \\ D-53111 Bonn} 
\email{rowlett@mpim-bonn.mpg.de}

\author[Pablo Su\'arez-Serrato]{Pablo Su\'arez-Serrato}
\address{  Current:  Departament de Matem\`atica Aplicada 1 \\ 
Universitat Polit\`ecnica de Catalunya\\ 
Barcelona \\ Permantent:   Instituto de Matem\'aticas,
              Universidad Nacional Auton\'oma de M\'exico\\
              Ciudad Universitaria, Coyoac\'an,
              04510. M\'exico, D. F.  } \email{p.suarez-serrato@matem.unam.mx}

\author[Samuel Tapie]{Samuel Tapie}
\address{Laboratoire Jean Leray\\ Universit de Nantes\\
2, rue de la Houssini\`ere - BP 92208\\ F-44322 Nantes Cedex 3, France} 
\email{samuel.tapie@univ-nantes.fr}

\keywords{convex co-compact, conformally compact, negative curvature, geodesic length spectrum, topological entropy, dynamics, geodesic flow, prime orbit theorem, Laplacian, pure point spectrum. MSC 37D40, 58J50, 53C22.}

\begin{abstract}
In this note, we study the dynamics and associated zeta functions of conformally compact manifolds with variable negative sectional curvatures. We begin with a discussion of a larger class of manifolds known as \em convex co-compact manifolds with variable negative curvature. \em Applying results from dynamics on these spaces, we obtain optimal meromorphic extensions of weighted dynamical zeta functions and asymptotic counting estimates for the number of weighted closed geodesics. A 
meromorphic extension of the standard dynamical zeta function and the \emph{prime orbit theorem} follow as corollaries. Finally, we investigate interactions between the dynamics and spectral theory of these spaces. 
\end{abstract}
\maketitle

\section{Introduction}
\emph{Conformally compact} manifolds are a class of non-compact manifolds with variable curvature, introduced by C. Fefferman and C. R. Graham \cite{fg} to study conformal invariants. A conformally compact manifold is an open manifold with compact boundary whose Riemannian metric is conformally compact. These metrics generalize the Poincar\'e model of hyperbolic space, so conformally compact metrics are also known as Poincar\'e metrics. 
This paper focuses on the dynamics of those conformally compact manifolds with variable curvature, in the purpose of further applications to their spectral theory. 

In 2001, Perry \cite{plength} demonstrated a \em prime orbit theorem \em for the geodesic flow on convex co-compact hyperbolic manifolds which predicts the asymptotic behavior of the number of prime orbits of the associated flow in the spirit of the prime number theorem. Perry's proof relies on a detailed study of the associated Selberg zeta function. However, another proof of this result was already known using purely dynamical methods. Indeed, the prime orbit theorem of \cite{plength} follows almost immediately from Parry and Pollicott's prime orbit theorem for Axiom A flows restricted to a basic set \cite{pp83}. One goal of this work is to increase communication between dynamics and geometric analysis. 

Convex co-compact hyperbolic manifolds are complete hyperbolic manifolds whose closed geodesics are contained in a compact set. One may also look at \em convex co-compact with variable (negative) curvature, i.e. complete manifold with negative sectional curvatures whose closed geodesics are contained in a compact set. \em In \S 2, we present in detail the notions of conformally compact and convex co-compact. We provide a complete proof of the following result because, even though it may well be known by experts, we could not locate a published proof. 

\begin{theorem}\label{th:ConfCvxCpct}
Let $(M,g)$ be a conformally compact manifold with negative sectional curvatures. Then $(M,g)$ is convex co-compact. The converse is true if $M$ is a surface or if $g$ has constant sectional curvatures.
\end{theorem}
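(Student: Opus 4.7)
For the forward direction, I exploit the asymptotic hyperbolicity of $g$ near the conformal boundary $\partial \bar M$ to force every closed geodesic into a compact subset of $M$. First I fix a geodesic boundary defining function $\rho$ so that in a collar neighborhood $\cN$ of $\partial \bar M$ the metric takes the normal form
\[
g = \frac{d\rho^2 + h_\rho}{\rho^2},
\]
where $h_\rho$ is a smooth family of metrics on $\partial \bar M$. Setting $t = -\log\rho$, the coordinate $t$ is a proper exhaustion of $\cN$, and the slices $\Sigma_c = \{t = c\}$ play the role of horospheres; the convexity of these slices in $(M,g)$ will do the work.

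The central computation shows that the integral curves of $\partial_t$ are unit-speed $g$-geodesics, so $\mathrm{Hess}^g(t)(\partial_t, \partial_t) = 0$, while the second fundamental form of $\Sigma_c$ with respect to the outward unit normal equals the induced metric plus an $O(e^{-c})$ error. Equivalently, for a $g$-unit vector $v = \cos\theta\,\partial_t + \sin\theta\,X$ at a point of $\Sigma_c$ with $X$ a $g$-unit slice-tangent vector,
\[
\mathrm{Hess}^g(t)(v, v) = \sin^2(\theta)\bigl(1 + O(e^{-c})\bigr).
\]
I choose $c_0$ so large that $\mathrm{Hess}^g(t) \geq 0$ on $\{t \geq c_0\}$, with strict positivity whenever $\sin\theta \neq 0$. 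Now suppose $\gamma$ is a closed geodesic meeting $\{t > c_0\}$; then $t\circ\gamma$ is convex on every interval during which $\gamma$ remains in that region. If $\gamma$ is contained entirely in $\{t > c_0\}$, then $t\circ\gamma$ is a convex periodic function, hence constant, forcing $\dot\gamma$ to be slice-tangent and $\mathrm{Hess}^g(t)(\dot\gamma,\dot\gamma) > 0$, a contradiction. Otherwise $\gamma$ enters $\{t > c_0\}$ at some $s_0$ with $(t\circ\gamma)'(s_0) > 0$, and convexity forces $(t\circ\gamma)'$ to stay nondecreasing, so $\gamma$ can never return, again contradicting periodicity. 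Thus every closed geodesic lies in the compact set $\{t \leq c_0\}$.

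For the converse in constant curvature, $(M,g) = \Gamma\backslash\bbH^n$ for a convex co-compact Kleinian group $\Gamma$, and the classical Kleinian compactification $\bar M = (\bbH^n \cup \Omega(\Gamma))/\Gamma$ obtained from the domain of discontinuity $\Omega(\Gamma) \subset \bbS^{n-1}_\infty$ realizes $g$ as conformally compact through the ball model of $\bbH^n$, in which $1 - |x|^2$ descends to a boundary defining function. If $M$ is a surface with variable negative curvature, each end of $M\setminus C(M)$, where $C(M)$ denotes the (compact) convex core, is a funnel diffeomorphic to $[0,\infty) \times S^1$. In geodesic polar coordinates around the closed geodesic bounding the convex core, the funnel metric takes the form $dr^2 + f(r)^2 d\theta^2$, where $f$ solves a Jacobi equation driven by the negative curvature; the substitution $\rho = 1/f(r)$ then converts this to the conformally compact normal form $\rho^{-2}(d\rho^2 + a(\rho)^2 d\theta^2)$ with $a(\rho)$ smooth and positive at $\rho = 0$.

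The main obstacle is the Hessian estimate in the forward direction: one must control the $O(\rho)$ correction terms in the collar expansion of $g$ carefully enough to guarantee strict convexity of the slices $\Sigma_c$ uniformly as $c \to \infty$, using the negative curvature hypothesis to rule out a destructive sign in the subleading behavior of $h_\rho$. For the converse on surfaces, the delicate point is verifying that the change of variable $\rho = 1/f(r)$ produces a genuinely smooth conformally compact structure at $\rho = 0$; this ultimately depends on the one-dimensional nature of the slice geometry, and the failure of the analogous smoothing in higher dimensions with variable curvature is precisely what restricts the converse to surfaces or to constant sectional curvatures.
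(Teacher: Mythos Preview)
Your forward direction is correct and is a hands-on version of what the paper obtains by citing Bahuaud: both arguments show that the level sets of (a suitable function of) a boundary defining function are eventually strictly convex, so every closed geodesic is trapped in a compact sublevel set, and convex co-compactness follows from Proposition~\ref{prop:CvCpct}. Your constant-curvature converse via the ball model is likewise the standard construction the paper invokes through \cite{pp}.

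The gap is in your surface converse. First, in Fermi coordinates about the core geodesic of a funnel the metric is $dr^2 + f(r,\theta)^2\,d\theta^2$ with $f$ generally depending on $\theta$; the warped-product form $dr^2 + f(r)^2\,d\theta^2$ you write holds only when the curvature is rotationally invariant in the funnel. Second, even granting that form, the substitution $\rho = 1/f(r)$ yields $\rho^2 g = (f/f')^2\,d\rho^2 + d\theta^2$, and smooth extension to $\rho = 0$ requires $u := f/f'$ together with all its $\rho$-derivatives to converge as $r\to\infty$. Since $u' = 1 + K u^2$ along the ray, if the curvature $K$ merely oscillates between $-b^2$ and $-a^2$ then $u$ need not even have a limit; you correctly flag this as the ``delicate point'' but offer no mechanism to resolve it, and there is no one-dimensional miracle that forces convergence from pinching alone. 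The paper takes an entirely different route for surfaces: it produces the conformal hyperbolic metric $g_H = u^2 g$ via uniformization, uses Yau's Schwarz lemma to pin the conformal factor between positive constants, and then transfers conformal compactness between $g$ and $g_H$, reducing to the constant-curvature case already in hand.
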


Basic notions from dynamics are presented in \S 3. 

Connections between the dynamics and spectral theory on hyperbolic manifolds originate in Selberg's trace formula for compact locally symmetric spaces \cite{sel}. It has been a subject of major research since then; the work of Sullivan and of Lax and Phillips have been of particular influence. The Selberg trace formula is based on the use of a \emph{dynamical Zeta function}; these functions have turned out to be a powerful tool. In the late 1980s, Guillop\'e and Zworski generalized Selberg's trace formula to infinite area Riemann surfaces with finite geometry \cite{gz}, using a \emph{weighted dynamical Zeta function,} related to Selberg's zeta function. This was further extended to higher dimensions by Guillarmou and Naud in \cite{gn}.

On compact manifolds with \emph{variable} negative curvature, Duistermaat and Guillemin established an asymptotic trace formula using the wave kernel \cite{dg}, which also gives a relation between the spectrum and dynamics, even though it is harder to control. 
For manifolds with infinite volume and \emph{variable curvature}, one does not expect a Selberg trace formula. Nevertheless, an asymptotic version of Selberg's trace formula was obtained for compact perturbations of convex co-compact hyperbolic manifolds in \cite{row1}.\footnote{We have discovered a subtle gap in the proof of this result which, however does not affect the application by Borthwick and Perry \cite{bp} which shows that on compact perturbations of hyperbolic manifolds, the resonances determine the length spectrum.} Very few results are known for more general non-compact manifolds with variable curvature, such as the conformally compact or convex co-compact manifolds. As a first step, we study dynamical weighted zeta functions on such manifolds.

In \S 4, we use dynamical studies of Axiom A flows to demonstrate the following properties of a large class of dynamical zeta functions. Let $\pgL$ be the set of primitive closed geodesics, and $\gL$ be the set of all closed geodesics. For $\gamma \in \cL$ such that $\gamma = k \gamma_p$ with $\gamma_p \in \pgL$, let $l_p(\gamma) = l(\gamma_p)$.
\begin{theorem}\label{theo:WZeta}
Let $(M, g)$ be a conformally compact manifold whose sectional curvatures satisfy $-b^2\leq K_g\leq -a^2<0$, with non Abelian fundamental group and non-arithmetic length spectrum. Let $U : \gL\rightarrow \R$ be a weight \emph{which derives from the H\"older potential $W$}, and $\wp(W)$ be the pressure of $W$ (with respect to the geodesic flow). Then the weighted Zeta function
$$Z_U(s) = \exp \left( \sum_{\gamma \in \pgL} \sum_{k \in \N} \frac{1}{k}e^{-k s l_p(\gamma) + U(k\gamma)} \right)$$
converges absolutely on $\mathfrak{R}(s) > \wp(W)$. It admits a meromorphic extension to the half plane $\mathfrak{R}(s)> \wp(W)-\frac{\lambda\alpha}{2}$, where $\lambda\in[a,b]$ is the expansion factor of the geodesic flow on the non-wandering set, and $\alpha\in(0,1]$ is the H\"older exponent of $W$. This extension is in general optimal. Moreover, with the exception of a simple pole at $\wp(W)$, this extension is analytic and non-vanishing in an open neighborhood of $\{ \mathfrak{R}(s) \geq \wp(W)\}$. 
\end{theorem}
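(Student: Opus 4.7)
The plan is to reduce the theorem to a dynamical statement on the compact non-wandering set $\Omega$ of the geodesic flow and then to apply the transfer-operator machinery for Axiom A flows. First, Theorem \ref{th:ConfCvxCpct} gives that $(M,g)$ is convex co-compact, so $\Omega\subset SM$ is compact and the geodesic flow $\phi_t$ restricted to $\Omega$ is a hyperbolic basic set with expansion rates in $[a,b]$; this is the meaning of the parameter $\lambda$. The assumptions of non-arithmetic length spectrum and non-abelian $\pi_1(M)$ ensure that $\phi_t|_\Omega$ is topologically mixing. Since the closed orbits of the flow are exactly the prime closed geodesics in $\pgL$, and since $U$ derives from a H\"older potential $W$ in the sense that $U(\gamma)$ equals the Birkhoff integral of $W$ along $\gamma$, the function $Z_U$ is a Ruelle-type zeta function built from the H\"older weight $-sr+W$, and its absolute convergence on $\{\Re s > \wp(W)\}$ follows immediately from the variational characterization of the topological pressure $\wp(W)$.

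Next I would code $\phi_t|_\Omega$ via a Bowen--Ratner Markov family as a H\"older suspension over a subshift of finite type $(\Sigma,\sigma)$ with roof function $r$ and symbolic weight $w$ corresponding to $W$. Under this coding $Z_U(s)$ factors, up to finitely many elementary terms coming from the boundary of the Markov sections, as $\det(I-\mathcal{L}_s)^{-1}$, where
$$(\mathcal{L}_s f)(x) = \sum_{\sigma y = x} e^{-sr(y)+w(y)} f(y)$$
is the family of transfer operators on $\alpha$-H\"older functions on $\Sigma$. This dictionary is the crucial technical step.

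Ruelle's theorem then gives that $\mathcal{L}_s$ has a simple, isolated leading eigenvalue $e^{\wp(-sr+w)}$, which equals $1$ precisely at $s=\wp(W)$; this produces the simple pole of $Z_U(s)$ at $\wp(W)$. The essential spectral radius of $\mathcal{L}_s$ on $C^\alpha(\Sigma)$ is controlled by $e^{\wp(-sr+w)-\lambda\alpha/2}$ through the Ionescu-Tulcea--Marinescu inequality applied with expansion rate $\lambda$ and H\"older exponent $\alpha$, and propagating this bound into the $s$-plane by means of the analyticity of $s\mapsto\mathcal{L}_s$ yields the meromorphic extension to $\{\Re s > \wp(W)-\lambda\alpha/2\}$. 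Optimality is the fact that this essential-spectrum bound is generically saturated, so Ruelle resonances accumulate on the boundary line. Finally, absence of other poles or zeros in an open neighborhood of $\{\Re s\geq\wp(W)\}$ follows from Dolgopyat--Pollicott--Sharp oscillatory-sum estimates: under topological mixing and non-arithmeticity, $\mathcal{L}_{\wp(W)+it}$ has spectral radius strictly less than $1$ for all $t\neq 0$, with quantitative control that extends across the critical line.

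The main obstacle I anticipate is twofold. First, one must verify that the Markov coding really translates the weight $U(k\gamma)$ on closed geodesics into the Birkhoff sums $\sum_{j=0}^{n-1} w(\sigma^j x)$ along symbolic periodic points of period $n$, in a way fully compatible with the pressure $\wp(W)$; the boundary corrections from the Markov sections must also be shown to contribute only entire, non-vanishing factors, so that none of the poles, zeros, or region of analyticity are affected. Second, one must extract the precise strip width $\lambda\alpha/2$ by bounding $\|\mathcal{L}_s\|_{C^\alpha}$ uniformly in $s$ in a strip and matching the spectral gap of $\mathcal{L}_s$ to the claimed region of meromorphy; this is where the numerical constant in the exponent must be tracked with care rather than absorbed into a generic bound.
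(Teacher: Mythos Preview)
Your reduction is exactly the paper's: invoke Theorem~\ref{th:ConfCvxCpct} to get convex co-compactness, then Proposition~\ref{prop:AxiomA} to see that the geodesic flow restricted to $\Omega(g)$ is Axiom~A with $\Omega(g)$ its unique basic set, and then Dal'bo's Theorem~\ref{theo:Dalbo} to obtain topological (weak) mixing from non-arithmeticity. From that point on, however, the paper does not rebuild the transfer-operator machinery; it simply quotes Theorem~11 of Haydn \cite{Hay90} as a black box to get the meromorphic extension to $\Re(s)>\wp(W)-\lambda\alpha/2$ together with the simple pole at $\wp(W)$ and analyticity/non-vanishing near the critical line, and then cites Section~6 of Pollicott \cite{Pol86} for the generic optimality. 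So your sketch is not a different route but rather an unpacking of the very result the paper invokes; the two ``obstacles'' you flag (compatibility of the Markov coding with the Birkhoff-sum weights, and tracking the constant $\lambda\alpha/2$ through the essential spectral radius bound) are precisely the content of Haydn's paper, and the authors do not revisit them.

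One small correction: you do not need Dolgopyat--Pollicott--Sharp estimates for the last claim. The statement only asks for analyticity and non-vanishing in \emph{some} open neighborhood of $\{\Re(s)\geq \wp(W)\}$, not in a uniform strip. Once the function is meromorphic in the larger strip, it suffices to know that $1$ is not an eigenvalue of $\mathcal{L}_{\wp(W)+it}$ for $t\neq 0$, which is the classical Parry--Pollicott consequence of weak mixing \cite{pp83}; Dolgopyat-type bounds would give much more and are not used here.
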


Based on the extension of the weighted Zeta functions, we demonstrate the following weighted prime orbit theorem. Let ${\cL}_{T}$ be the set of geodesics $\gamma$ of length at most $T$. 
 
\begin{theorem}[Weighted Prime Orbit Theorem]\label{theo:WPOT}
Let $(M, g)$ be a conformally compact manifold with negative sectional curvatures, with non Abelian fundamental group and non-arithmetic length spectrum. Let $U : \cL \to \R$ be a weight such that $\log U$ derives from the H\"older potential $W$, with $\wp(W) > 0$. Then,
$$\sum_{\gamma \in {\cL}_{T}} U(\gamma)\sim\frac{e^{\wp(W)T} }{\wp(W)T} \mbox{ when }T\rightarrow \infty.$$ 
\end{theorem}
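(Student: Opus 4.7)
The plan is to apply a Wiener--Ikehara Tauberian theorem to the logarithmic derivative of the weighted zeta function, following the classical Parry--Pollicott strategy with Theorem~\ref{theo:WZeta} providing the essential analytic input. Set $W' := \log U$, so that $W'$ is the H\"older weight appearing in Theorem~\ref{theo:WZeta} and
$$\log Z_{W'}(s) \;=\; \sum_{\gamma_p \in \pgL}\sum_{k \geq 1}\frac{1}{k}\, U(k\gamma_p)\,e^{-ksl_p(\gamma_p)}.$$
Its logarithmic derivative
$$\eta(s) \;:=\; -\frac{Z_{W'}'(s)}{Z_{W'}(s)} \;=\; \sum_{\gamma_p \in \pgL}\sum_{k \geq 1} l_p(\gamma_p)\, U(k\gamma_p)\, e^{-ksl_p(\gamma_p)}$$
is the positive Laplace--Stieltjes transform $\eta(s)=\int_0^\infty e^{-sT}\,dA(T)$ of the non-decreasing step function $A(T) := \sum_{kl_p(\gamma_p)\leq T} l_p(\gamma_p)\,U(k\gamma_p)$, absolutely convergent on $\{\Re s > \wp(W)\}$. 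By Theorem~\ref{theo:WZeta}, $Z_{W'}$ has a simple pole at $s=\wp(W)$ and is analytic and non-vanishing in an open neighborhood of $\{\Re s \geq \wp(W)\}\setminus\{\wp(W)\}$, whence $\eta(s) - \frac{1}{s-\wp(W)}$ extends analytically across the critical line $\Re s = \wp(W)$.

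The Wiener--Ikehara theorem then produces
$$A(T)\;\sim\;\frac{e^{\wp(W) T}}{\wp(W)} \qquad (T\to\infty).$$
To pass from the Chebyshev-type quantity $A(T)$ to the desired weighted prime orbit sum, one first observes that contributions with $k\geq 2$ involve only primitive orbits of length at most $T/2$; since $W'$ is H\"older, one has $|U(k\gamma_p)|\leq e^{Ck l_p(\gamma_p)}$ on a fundamental range, and a crude estimate shows that the $k\geq 2$ piece is $O(e^{(\wp(W)-\delta)T})$ for some $\delta>0$. Consequently the primitive Chebyshev-type sum $\psi_U^\ast(T) := \sum_{\gamma_p\in\pgL,\, l_p(\gamma_p)\leq T} l_p(\gamma_p)\,U(\gamma_p)$ satisfies $\psi_U^\ast(T)\sim e^{\wp(W) T}/\wp(W)$. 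A standard Stieltjes integration by parts,
$$\pi_U^\ast(T) \;:=\; \sum_{\gamma_p\in\pgL,\, l_p(\gamma_p)\leq T} U(\gamma_p) \;=\; \int_{l_{\min}}^{T}\frac{d\psi_U^\ast(t)}{t},$$
yields $\pi_U^\ast(T) \sim \frac{e^{\wp(W) T}}{\wp(W) T}$, and the non-primitive terms of $\sum_{\gamma\in\cL_T}U(\gamma)$ are again lower order in $T$, so the stated asymptotic persists for the full count.

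The main obstacle is the Tauberian step: one must know that $\eta(s)-\frac{1}{s-\wp(W)}$ actually extends analytically (or at least continuously) through the entire vertical line $\Re s=\wp(W)$, which is what Wiener--Ikehara requires; the final clause of Theorem~\ref{theo:WZeta}, asserting analyticity and non-vanishing of $Z_{W'}$ in an \emph{open neighborhood} of $\{\Re s\geq \wp(W)\}$ apart from the simple pole at $\wp(W)$, provides exactly this. Once this analytic input is secured, the rest of the argument is the classical passage from $\psi^\ast$ to $\pi^\ast$. A secondary bookkeeping issue is controlling the non-primitive contributions, but this is routine from the H\"older (hence bounded on bounded time cocycles) hypothesis on $W'$, together with the assumption $\wp(W)>0$ which ensures that $e^{\wp(W)T}$ dominates every polynomial factor arising from partial summation.
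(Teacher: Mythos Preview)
Your proposal is correct and follows essentially the same route as the paper: both combine the analytic properties of the weighted zeta function established in Theorem~\ref{theo:WZeta} with the Wiener--Ikehara Tauberian argument of Parry--Pollicott. The paper packages this as a short lemma that rescales the pole to $s=1$ and then simply cites pp.~588--589 of \cite{pp83}, whereas you have written out explicitly what those pages contain (logarithmic derivative, Chebyshev-type function, integration by parts, removal of non-primitive terms); but the substance is the same.
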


Finally, we obtain the following meromorphic extension for the particular case of Guillarmou-Naud's weighted zeta function \cite{gn}, by relating it to a weighted zeta function which satisfies the hypotheses of the previous theorem. 

Let $W_{SBR}$ denote the Sinai-Bowen-Ruelle potential of the geodesic flow, and for $\gamma \in \cL$, let $\cP^k _{\gamma}$ denote the $k$-times Poincar\'e map about $\gamma$. 
\begin{theorem}\label{theo:WZeta2}
Let $(M, g)$ be a conformally compact manifold whose sectional curvatures satisfy $-b^2\leq K_g\leq -a^2<0$, with non Abelian fundamental group and non-arithmetic length spectrum. The weighted Zeta function
$$ \tilde{Z} (s) = \exp \left( \sum_{\gamma \in \cL_{p}} \sum_{k \in \N} \frac{e^{-k s l_p(\gamma)}}{k \sqrt{| \det(I - \cP^k _{\gamma}) |}} \right)$$
is an analytic non-zero function on the half-plane $\mathfrak{R}(s)> \wp(-\frac{W_{SBR}}{2})$. It admits a meromorphic extension to the half plane 
$$\mathfrak{R}(s)> \wp(-\frac{W_{SBR}}{2}) - \inf \left\{ \frac{\lambda a}{b}, \frac{\lambda}{2} \right\},$$
where $\lambda$ is the expansion factor of the geodesic flow on the non-wandering set. Moreover, with the exception of a simple pole at $\wp(-\frac{W_{SBR}}{2})$, this extension is analytic and non-vanishing in an open neighborhood of $\{ \mathfrak{R}(s) \geq \wp(-\frac{W_{SBR}}{2}) \}$.  If $\wp(-\frac{W_{SRB}}{2} ) > 0$, then we have the counting estimate
$$\sum_{\gamma \in \cL_T} | \det(I-\cP_{\gamma})|^{-\frac{1}{2}}\sim \frac{ \exp \left( \wp(-\frac{W_{SRB}}{2})T)\right)}{\wp(-\frac{W_{SRB}}{2} ) T} \mbox{ when } T\rightarrow\infty.$$
\end{theorem}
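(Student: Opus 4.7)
The plan is to factor $\tilde Z(s)$ as the product of a weighted zeta function in the form treated by Theorem~\ref{theo:WZeta} times a correction factor that is analytic on a larger half-plane. The natural potential to compare with is $W=-W_{SBR}/2$, a H\"older function on the non-wandering set since $W_{SBR}$ is the infinitesimal unstable Jacobian of the geodesic flow; along a closed orbit $k\gamma$ it integrates to $W_{SBR}(k\gamma)=\log|\det d\phi_T|_{E^u_\gamma}|$.

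For $\gamma\in\cL_p$ and $k\in\N$ with $T=kl_p(\gamma)$, the linearized Poincar\'e map $\cP^k_\gamma$ acts on the $d\phi_T$-invariant transverse splitting $E^s_\gamma\oplus E^u_\gamma$. Denoting its eigenvalues on $E^u$ by $(\mu_j)$ (with $|\mu_j|\ge e^{\lambda T/2}$) and on $E^s$ by $(\nu_i)$ (with $|\nu_i|\le e^{-\lambda T/2}$), one computes
\[
|\det(I-\cP^k_\gamma)|=e^{W_{SBR}(k\gamma)}\prod_j(1-\mu_j^{-1})\prod_i(1-\nu_i)=e^{W_{SBR}(k\gamma)}\bigl(1+O(e^{-\lambda T/2})\bigr),
\]
hence $|\det(I-\cP^k_\gamma)|^{-1/2}=e^{-W_{SBR}(k\gamma)/2}(1+O(e^{-\lambda T/2}))$. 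Writing $\tilde Z(s)=Z_{-W_{SBR}/2}(s)\exp(R(s))$ with $R(s)=\sum_{\gamma,k}k^{-1}e^{-ksl_p(\gamma)}(|\det(I-\cP^k_\gamma)|^{-1/2}-e^{-W_{SBR}(k\gamma)/2})$, the above estimate gives $R(s)$ dominated termwise by $C\sum_{\gamma,k}k^{-1}e^{-k(\mathfrak{R}(s)+\lambda/2)l_p(\gamma)-W_{SBR}(k\gamma)/2}$, a pressure-type series that converges absolutely on $\mathfrak{R}(s)>\wp(-W_{SBR}/2)-\lambda/2$. On this half-plane $\exp(R(s))$ is analytic and non-vanishing.

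Now $-W_{SBR}/2$ is H\"older on the non-wandering set with exponent $\alpha=2a/b$ (the classical Hirsch--Pugh--Shub regularity of the unstable distribution under pinching $-b^2\le K\le-a^2$), so Theorem~\ref{theo:WZeta} yields a meromorphic extension of $Z_{-W_{SBR}/2}(s)$ to $\mathfrak{R}(s)>\wp(-W_{SBR}/2)-\lambda a/b$, with a simple pole at $\wp(-W_{SBR}/2)$ and non-vanishing elsewhere in an open neighborhood of the critical axis. Multiplying by $\exp(R(s))$ gives the meromorphic extension of $\tilde Z(s)$ to $\mathfrak{R}(s)>\wp(-W_{SBR}/2)-\inf\{\lambda a/b,\lambda/2\}$, inheriting the pole and non-vanishing from the first factor.

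For the counting estimate under $\wp(-W_{SBR}/2)>0$, apply Theorem~\ref{theo:WPOT} to the weight $U(\gamma)=e^{-W_{SBR}(\gamma)/2}$ (derived from the H\"older potential $-W_{SBR}/2$) to obtain $\sum_{\gamma\in\cL_T}e^{-W_{SBR}(\gamma)/2}\sim e^{\wp(-W_{SBR}/2)T}/(\wp(-W_{SBR}/2)T)$; by the estimate of the second paragraph, replacing $e^{-W_{SBR}(\gamma)/2}$ by $|\det(I-\cP_\gamma)|^{-1/2}$ introduces only a lower-order correction, giving the stated asymptotic. The main obstacle is twofold: proving the sharp H\"older exponent $2a/b$ of $W_{SBR}$ (which is what feeds the $\lambda a/b$ term through Theorem~\ref{theo:WZeta}), and the pressure-type estimate that controls $R(s)$ on a $\lambda/2$-enlarged half-plane; with these in hand, everything reduces cleanly to Theorems~\ref{theo:WZeta} and~\ref{theo:WPOT}.
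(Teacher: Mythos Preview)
Your approach is essentially the paper's: factor $\tilde Z(s)=Z_{-W_{SBR}/2}(s)\cdot\exp(R(s))$, invoke Theorem~\ref{theo:WZeta} for the first factor, and show the correction is analytic on a large enough half-plane; the counting estimate then follows from the prime-orbit machinery. The structure and all the main ideas match.

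Two bookkeeping points are worth correcting, because your attribution of the two terms in the infimum differs from the paper's and rests on a slightly inaccurate claim. First, the H\"older exponent of $W_{SBR}$ is $\alpha=\min\{2a/b,\,1\}$, not $2a/b$: once the curvature is quarter-pinched the unstable distribution is $C^1$ and one cannot take $\alpha>1$ in Theorem~\ref{theo:WZeta}. With the correct $\alpha$, Theorem~\ref{theo:WZeta} alone already gives the extension of $Z_{-W_{SBR}/2}$ to $\mathfrak R(s)>\wp(-W_{SBR}/2)-\tfrac{\lambda\alpha}{2}=\wp(-W_{SBR}/2)-\min\{\lambda a/b,\,\lambda/2\}$; so the entire $\inf$ comes from the H\"older regularity, not from balancing against $R(s)$. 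Second, your eigenvalue bound $|\mu_j|\ge e^{\lambda T/2}$ is weaker than needed: the curvature pinching (Klingenberg) gives $|\mu_j|\ge e^{aT}$ and $|\nu_i|\le e^{-aT}$, hence $|\det(I-\cP^k_\gamma)|^{-1/2}=e^{-W_{SBR}(k\gamma)/2}(1+O(e^{-aT}))$ and $R(s)$ converges already for $\mathfrak R(s)>\wp(-W_{SBR}/2)-a$. Since $\min\{\lambda a/b,\,\lambda/2\}\le \lambda a/b\le a$ (because $\lambda\le b$), the correction factor never constrains the region of extension. With these two fixes your argument is exactly the paper's.
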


The proof suggests that this meromorphic extension to the half plane $\mathfrak{R}(s)> \wp(-\frac{W_{SBR}}{2}) - \inf \left\{ \frac{\lambda a}{b}, \frac{\lambda}{2} \right\},$ may also be optimal.   We wish to emphasize that these results are proven using only dynamical methods. In particular, we require neither pseudo-differential analysis nor semi-classical methods.
\medskip

In the final section of the paper \S 5, we explore connections between the dynamics and spectral theory on convex co-compact, conformally compact and asymptotically hyperbolic manifolds. We show the following relationship between the dynamics and the bottom of the spectrum of the Laplacian on convex co-compact manifolds with variable curvature.

\begin{theorem}\label{th:Eigenvalue} 
Let $(M, g)$ be a convex co-compact manifold of dimension $n+1$ whose sectional curvatures satisfy $-b^2 \leq K_g \leq -a^2<0$ and with non-Abelian fundamental group.
Let $h_g$ be the topological entropy of the geodesic flow on $S^gM$ restricted to its non-wandering set. If $h_g > \frac{na}{2}$, then the bottom of the spectrum $\lambda_0(g)$ of the Laplacian $\Delta_g$ satisfies
$$ h_g(na-h_g) \leq \lambda_0(g) \leq \frac{(nb)^2}{4} .$$
If $h_g\leq \frac{na}{2},$ then we have
$$ \frac{(na)^2}{4} \leq \lambda_0(g) \leq \frac{(nb)^2}{4} .$$
\end{theorem}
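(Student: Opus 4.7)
My approach rests on two equivalent descriptions of the bottom of the $L^{2}$-spectrum: the Rayleigh quotient
\[
\lambda_0(g)=\inf_{\phi\in C_c^{\infty}(M)\setminus\{0\}}\frac{\int_M|\nabla\phi|_g^2\,dv_g}{\int_M\phi^2\,dv_g},
\]
and the Moss--Piepenbrink/Fischer--Colbrie--Schoen dual form, according to which $\lambda_0(g)$ is the supremum of those $\lambda$ for which there exists a positive function $u$ on $M$ with $-\Delta_g u\ge\lambda u$ (weakly). The upper bound $\lambda_0(g)\le (nb)^2/4$ falls out immediately from Cheng's eigenvalue comparison theorem, since $K_g\ge -b^2$ forces $\mathrm{Ric}_g\ge -nb^2$ on the complete $(n+1)$-dimensional manifold $M$; this part of the argument makes no use of $h_g$ or the convex co-compact structure.

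For the lower bound I plan to construct, on the universal cover $(\tilde M,\tilde g)$, a positive and $\Gamma$-invariant function with controlled Laplacian, where $\Gamma=\pi_1(M)$. Fix a basepoint $o\in\tilde M$ and, for any $\delta>h_g$, form the Poincar\'e series
\[
u_\delta(x)\;=\;\sum_{\gamma\in\Gamma}e^{-\delta\, d_{\tilde g}(x,\gamma o)}.
\]
Under the hypotheses the critical exponent of $\Gamma$ coincides with $h_g$ (the Patterson--Sullivan identification, extended to variable negative curvature), so this series converges locally uniformly; a reindexing shows $u_\delta$ is $\Gamma$-invariant, so it descends to a positive function on $M$, smooth off the single point $\pi(o)$. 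The Hessian comparison theorem on a Cartan--Hadamard manifold with $K_{\tilde g}\le -a^2$ gives $\Delta_{\tilde g}\,d_{\tilde g}(\cdot,\gamma o)\ge na\coth\!\bigl(a\,d_{\tilde g}(\cdot,\gamma o)\bigr)\ge na$ away from $\gamma o$, and combined with $|\nabla d|=1$ this produces termwise
\[
-\Delta_{\tilde g}\,e^{-\delta d(\cdot,\gamma o)}=\bigl(\delta\Delta d(\cdot,\gamma o)-\delta^{2}\bigr)\,e^{-\delta d(\cdot,\gamma o)}\ge\delta(na-\delta)\,e^{-\delta d(\cdot,\gamma o)}
\]
for every $\delta\in(0,na]$. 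Summing yields $-\Delta_{\tilde g}u_\delta\ge\delta(na-\delta)u_\delta$ on $\tilde M\setminus\Gamma o$, and hence on $M\setminus\{\pi(o)\}$ after descending.

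Invoking the dual characterisation on the punctured manifold, and using that a single point has vanishing $H^{1}$-capacity in dimension $n+1\ge 2$ (so localising test functions away from $\pi(o)$ does not affect the infimum), I would conclude $\lambda_0(g)\ge\delta(na-\delta)$ for every $\delta>h_g$. The dichotomy in the theorem then emerges by optimising this bound over the admissible range: when $h_g>na/2$, sending $\delta\to h_g^{+}$ yields $h_g(na-h_g)$; when $h_g\le na/2$, the concave parabola $\delta\mapsto\delta(na-\delta)$ is maximised at $\delta=na/2\ge h_g$ (taking a limit $\delta\to (na/2)^{+}$ if $h_g=na/2$), giving $\lambda_0(g)\ge (na)^{2}/4$.

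The main technical point I expect to spend effort on is the mild non-smoothness of $u_\delta$ along the orbit $\Gamma o$, where each summand is only Lipschitz at its centre. The cleanest resolution is to work distributionally: in dimension $n+1\ge 2$, the formal identity $\mathrm{div}(\hat r)=n/r$ holds as a distribution with no atomic correction at the origin, so the inequality $-\Delta_{\tilde g}u_\delta\ge\delta(na-\delta)u_\delta$ persists weakly and the Fischer--Colbrie--Schoen computation runs through with a standard test-function formulation. An alternative that avoids any smoothness question is to replace $u_\delta$ by the boundary integral $u(x)=\int_{\partial\tilde M}e^{-\delta b_\xi(x,o)}\,d\mu_o(\xi)$ against a $\Gamma$-equivariant conformal density $\mu_o$ of dimension $\delta$ on the boundary at infinity (a Patterson--Sullivan-type measure), the price being the verification of existence of such densities for every $\delta>h_g$ in variable negative curvature.
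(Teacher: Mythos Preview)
Your argument is correct, and in outline it parallels the paper's: build a positive $\Gamma$-invariant supersolution on the universal cover, invoke the positivity characterisation of $\lambda_0$ (the paper cites Sullivan's Theorem~2.1 in \cite{sull}, which is exactly the Moss--Piepenbrink/Fischer--Colbrie--Schoen principle you name), and optimise over a dimension parameter $\delta$. The implementations differ, however. For the lower bound the paper works on the visual boundary rather than on the orbit: it takes the Patterson--Sullivan density $(\sigma_x)$ at the critical exponent and sets $\phi(x)=\int_{\Lambda_\Gamma}e^{-h_g\mathcal B_\xi(x,o)}\,d\sigma_o(\xi)$; a comparison lemma $na\le\Delta_g\mathcal B_\xi\le nb$ (proved via stable Jacobi fields and Rauch) then gives $\Delta_g\phi\ge h_g(na-h_g)\phi$ with no singularities to excise. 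For the second regime $h_g\le na/2$ the paper invokes Roblin's $\delta$-conformal densities $(\mu^\delta_x)$ for $\delta\ge h_g$ and optimises at $\delta=na/2$. Your Poincar\'e-series construction $u_\delta=\sum_{\gamma}e^{-\delta d(\cdot,\gamma o)}$ is more elementary and handles both regimes uniformly, at the cost of the orbit-point singularity you correctly flag and dispose of (your ``alternative'' via a boundary density is in fact essentially the paper's route). For the upper bound the paper does not use Cheng: it quotes Eichhorn \cite{Eich} to the effect that the essential spectrum contains $\bigl(\tfrac{(nb)^2}{4},\infty\bigr)$, which forces $\lambda_0\le(nb)^2/4$ directly. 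Your Cheng/Ricci-comparison argument is an equally valid and arguably cleaner way to the same bound. One small remark: your restriction ``for every $\delta\in(0,na]$'' is unnecessary, since the termwise inequality $-\Delta e^{-\delta d}\ge\delta(na-\delta)e^{-\delta d}$ holds for all $\delta>0$; this matters only cosmetically, as you then let $\delta\to h_g^{+}$ in any case.
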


It can be hoped that there exist further relationships between the dynamics and the spectral theory on conformally compact manifolds, similar to what has been proven for hyperbolic manifolds. In particular, Theorem \ref{theo:WZeta2} suggests that there may be a relation between the Sinai-Bowen-Ruelle potential of the geodesic flow and the existence of pure-point spectrum. This paper, which focuses on the dynamics on such manifolds, is meant to be a first step in this direction. 

\subsection*{Acknowledgements}
All three authors would like to thank the Hausdorff Center for Mathematics and the Max Planck Institute for Mathematics, in Bonn (Germany), for the opportunity which they gave us to start this collaboration. We also acknowledge the support of the F\'ed\'eration Math\'ematiques des Pays de la Loire through its program MATPYL during the final stages of this work and insightful conversations with Gilles Carron, Laurent Guillop\'e and Rafe Mazzeo.  The second two authors gratefully acknowledge the support of the Laboratoire International Associe Salomon Lefschetz CNRS-CONACyT.

\section{Conformally compact and convex co-compact manifolds}\label{sec:Ccpct}
In this section, we recall the definitions of conformally compact and convex co-compact manifolds and demonstrate connections between these two notions. 

\begin{defn}\label{def:ConfCpct}
Let $M$ be the interior of a smooth closed (n+1)-manifold with smooth closed $n$-boundary $\pa M$. Let $g$ be a complete metric on $M$. We say that $g$ is \emph{conformally compact} if there is a smooth positive function $x : M \to (0,\infty)$ such that
\begin{enumerate}
\item $x$ vanishes to first order on the boundary
$$\lim_{p\to \pa M}x(p) = 0 \mbox{ and } \lim_{p\to \pa M}dx(p)\neq 0;$$
\item the (incomplete) metric $\bar g = x^2 g$
extends to a smooth metric, which we will still write $\bar g$, on $\bar{M} = M\cup \pa M$.
\end{enumerate}
\end{defn}
Such a smooth function $x$ defined at least in a neighborhood $\cN$ of $\pa M$ so that $x: \cN \to [0, \infty)$, $\pa M = x^{-1} (\{0\}),$ and $dx \neq 0$ on $\pa M$ is called a \em boundary defining function\em. If $(M,g)$ is conformally compact, and $x^2g$ is a smooth metric on $\bar M$, we will call $\pa M$ equipped with the induced conformal family of metrics the \emph{conformal boundary} of $M$. We define a \em conformally compact manifold \em to be a complete Riemannian manifold $M$ equipped with a conformally compact metric $g$. Conformally compact manifolds have strictly negative sectional curvatures at infinity. More precisely, we have the following. 

\begin{prop}\label{prop:SecInfty}
Let $(M,g)$ be a complete metric on a compact manifold $M$ with non-empty boundary $\pa M$. Assume that $g$ is conformally compact, and let $x : M\to (0,\infty)$ be a boundary defining function such that $\bar g = x^2g$ is a smooth metric on $\bar M = M\cup \pa M$. Then for any point $p_\infty\in \pa M$, we have
$$\lim_{p\to p_\infty}K_g(p) = -\abs{dx}^2_{\bar g}(p_\infty),$$
where $K_g(p)$ is any of the sectional curvatures of $(M,g)$ at the point $p$.
\end{prop}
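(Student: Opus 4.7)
The plan is to apply the conformal change-of-metric formula to $g = x^{-2}\bar g$ and to check that the $x^{-2}$ singularities in the resulting expression for $K_g$ cancel, leaving only $-|dx|^{2}_{\bar g}$ at the boundary. Write $g = e^{2\phi}\bar g$ with $\phi = -\log x$, fix a point $p$ near $\partial M$, and pick $X,Y\in T_pM$ forming a $\bar g$-orthonormal pair (noting that a sectional curvature depends only on the 2-plane spanned by $X,Y$). The standard conformal change formula for sectional curvature then gives
\[
K_g(X\wedge Y) = x^{2}\Bigl[K_{\bar g}(X,Y) - \mathrm{Hess}^{\bar g}\phi(X,X) - \mathrm{Hess}^{\bar g}\phi(Y,Y) + (X\phi)^{2} + (Y\phi)^{2} - |d\phi|^{2}_{\bar g}\Bigr].
\]

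Next I would substitute $d\phi = -dx/x$, $|d\phi|^{2}_{\bar g} = |dx|^{2}_{\bar g}/x^{2}$, and
\[
\mathrm{Hess}^{\bar g}\phi \;=\; -\frac{\mathrm{Hess}^{\bar g}x}{x} + \frac{dx\otimes dx}{x^{2}},
\]
into the formula above. After multiplying through by $x^{2}$, the $(Xx)^{2}$ contributions coming from $-\mathrm{Hess}^{\bar g}\phi(X,X)$ and from $+(X\phi)^{2}$ cancel exactly, and likewise for $Y$. What remains is the clean identity
\[
K_g(X\wedge Y) \;=\; x^{2}\,K_{\bar g}(X,Y) + x\bigl[\mathrm{Hess}^{\bar g}x(X,X) + \mathrm{Hess}^{\bar g}x(Y,Y)\bigr] - |dx|^{2}_{\bar g}.
\]

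Since $\bar g$ extends smoothly to $\bar M$, on a compact neighborhood of any $p_\infty\in\partial M$ the tensors $K_{\bar g}$ and $\mathrm{Hess}^{\bar g}x$ are uniformly bounded. As $p\to p_\infty$ the factor $x(p)\to 0$ kills the first two terms, while $|dx|^{2}_{\bar g}(p)\to |dx|^{2}_{\bar g}(p_\infty)$ by continuity of $dx$ and $\bar g$ on $\bar M$. The limit $-|dx|^{2}_{\bar g}(p_\infty)$ is manifestly independent of the choice of 2-plane, which is the content of the proposition.

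The only genuinely delicate point is the cancellation of the $1/x^{2}$ singularities in the second step: without it the sectional curvatures would blow up like $x^{-2}$ at the boundary, contradicting the well-known boundedness of the geometry of conformally compact metrics near $\partial M$. Once that cancellation is observed, the remainder of the argument is a routine application of the continuity of $\bar g$ and its derivatives on $\bar M$.
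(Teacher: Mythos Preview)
Your proof is correct and follows essentially the same approach as the paper: both apply the conformal change-of-curvature formula to $g = x^{-2}\bar g$ and read off the leading term as $x\to 0$. The paper compresses your computation by citing Besse's formula for the $(4,0)$-curvature tensor in Kulkarni--Nomizu form, obtaining $R_g = -x^{-4}\,\bar g\odot |dx|^{2}_{\bar g}\bar g + \mathcal O(x^{-3})$, whereas you carry out the cancellation of the $x^{-2}$ singularities explicitly at the level of sectional curvatures; your version is more self-contained but the underlying argument is the same.
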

\begin{proof}
The formul{\ae}\, given in \cite{Bes} p.58 to compute the $(4,0)$-curvature tensor of $g = \frac{\bar g}{x^2}$ imply the asymptotic expansion
$$R_g = -\frac{1}{x^4}\bar g\odot |dx|^2_{\bar g}\bar g + \mathcal O\left(\frac{1}{x^3}\right)$$
when $x\rightarrow 0$, where $\odot$ is the Kulkarni-Nomizu product. Therefore, the sectional curvatures are asymptotic modulo $\mathcal O(x)$ to the function $-|dx|^2 _{\bar g}$ on $\pa X$ as $x \to 0$.
\end{proof}

The simplest example of a conformally compact manifold is the real hyperbolic $n$-space, which is conformally equivalent to the Euclidean metric on the unit ball. The next natural set of examples are infinite volume \emph{convex co-compact} hyperbolic manifolds, which are quotients of hyperbolic $n$-space by a discrete group, such that in the quotient all closed geodesics remain in a compact set.  Another special set of examples of conformally compact manifolds are the \emph{asymptotically hyperbolic manifolds}, whose definition is originally due to Mazzeo and Melrose (see \cite{mmah}). We will now describe the larger class of \emph{convex co-compact manifolds with variable (negative) curvature} and relate them to conformally compact manifolds. 

Recall that a \emph{Hadamard manifold} $(M,g)$ is a simply connected $n$-manifold with non-positive sectional curvatures. The universal cover of any Riemannian manifold with negative sectional curvatures is a Hadamard manifold. The \emph{visual boundary}, which we will note $\pa_v M$, is defined to be the set of equivalence classes of geodesic rays, where two rays are equivalent when they remain at finite distance from each other. There is a natural topology on $\bar{M}_v = M\cup \pa_v M$, described as the \emph{cone topology} in \cite{e72} p. 495. Equipped with this topology, $\bar{M}_v$ is homeomorphic to the closed unit ball in $\mathbb{R}^n$. Any isometry acting on $\tilde{M}$ extends continuously to an action on the visual boundary $\pa \tilde{M}$. Let $\Gamma$ be the fundamental group of $M$ acting on $\tilde{M}$ by isometries. We denote by $\Lambda_\Gamma$ its \emph{limit set}, which is the smallest closed non-empty $\Gamma$-invariant subset in $\pa \tilde{M}$. It is also the intersection with $\pa \tilde{M}$ of the closure of the orbit by $\Gamma$ of any point in $\tilde{M}$. It follows from the Proposition 2.6 of \cite{e72} that the end points in $\pa_v\tilde M$ of lifts of closed geodesics in $M$ are dense in $\Lambda_\Gamma$. We write $CH(\Lambda_\Gamma)$ for the convex hull of $\Lambda_\Gamma$ in $\tilde{M}$. The set $$C(M) := CH(\Lambda_\Gamma)/\Gamma$$
is called the \emph{convex core} of $M$. 

The visual boundary of a Hadamard manifold $M$ is sometimes also called its ``conformal boundary.'' Actually, the following proposition shows that the concept of visual boundary generalizes the conformal boundary to all Hadamard manifolds. 

\begin{prop}
Let $(M,g)$ be a conformally compact simply connected manifold with negative sectional curvatures. Then there is a canonical homeomorphism $I_v$ between the compact spaces $\bar M = M\cup \pa M$ and $\bar{M}_v = M\cup \pa_v M$. 
\end{prop}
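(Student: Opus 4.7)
The plan is to use geodesic rays from a fixed base point to identify the two compactifications. Fix $o \in M$. Since every equivalence class of geodesic rays in a Hadamard manifold has a unique representative $\gamma_v(t) := \exp_o(tv)$ with $v$ in the unit sphere $S_o M \subset T_o M$, the cone topology canonically identifies $\bar M_v$ with $M \cup S_o M$. I would then define $\Phi \colon S_o M \to \partial M$ by $\Phi(v) = \lim_{t \to \infty}\gamma_v(t)$ in the compact space $(\bar M, \bar g)$, set $I_v$ equal to the identity on $M$ and to $\Phi^{-1}$ on $\partial M$, and verify that $I_v$ is a well-defined homeomorphism.

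The main step is to show that for every $v \in S_o M$ the ray $\gamma_v(t)$ actually converges in $\bar M$ to a single point of $\partial M$. Existence of subsequential limits is automatic by compactness of $\bar M$. Since $(M,g)$ is Hadamard, every $g$-geodesic segment is minimizing, so $d_g(o, \gamma_v(t)) = t$, and $\gamma_v$ exits every $g$-compact subset of $M$; in particular it exits each sublevel set $\{x \geq \epsilon\}$, which is compact in $\bar M$ and therefore in $M$, forcing $x(\gamma_v(t)) \to 0$. Hence every subsequential limit lies on $\partial M$. For uniqueness of the limit I would combine Proposition 2.2 with standard Jacobi-field (Rauch) comparison: on a neighborhood $\{x \leq \epsilon_0\}$ the sectional curvatures are pinched between two negative constants, so the Busemann function $b_v(p) = \lim_{t \to \infty}(d_g(p, \gamma_v(t)) - t)$ is $C^2$ with horospheres that form a smooth foliation asymptotic to the level sets of $x$ near $\partial M$. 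This geometric picture prevents $\gamma_v(t)$ from having two distinct conformal accumulation points.

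Next I would verify bijectivity of $\Phi$. For injectivity, distinct $v_1, v_2 \in S_o M$ produce rays whose $g$-distance $d_g(\gamma_{v_1}(t), \gamma_{v_2}(t))$ grows to infinity by the Cartan--Hadamard divergence of geodesics from a common point under strict negative curvature; combined with the conformal factor in $g = x^{-2}\bar g$, a shared conformal endpoint would force $d_{\bar g}(\gamma_{v_1}(t), \gamma_{v_2}(t)) \to 0$, and the curvature asymptotics of Proposition 2.2 would then yield a contradictory bound on $d_g$. For surjectivity, given $p_\infty \in \partial M$, take $p_n \to p_\infty$ in $\bar M$ with $p_n \in M$, let $v_n \in S_o M$ be the initial unit vector of the minimizing $g$-geodesic from $o$ to $p_n$, and extract a convergent subsequence $v_n \to v_\infty$; continuous dependence of $\exp$ on initial conditions together with the main step gives $\Phi(v_\infty) = p_\infty$.

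Finally, since $S_o M$ and $\partial M$ are compact Hausdorff, it suffices to check that $\Phi$ and its inverse are continuous: continuity of $\Phi$ follows from continuous dependence of geodesics on initial data plus the uniformity coming from the main step, while continuity of $I_v$ across the boundary (the requirement that $p_n \to p_\infty$ in $\bar M$ implies convergence of the initial unit vectors of the geodesic segments $\overline{op_n}$ to $\Phi^{-1}(p_\infty)$ in the cone topology) is the surjectivity argument run uniformly. The principal obstacle is the uniqueness statement in the main step, where the visual-boundary asymptotics of pinched negative curvature must be translated into genuine convergence in the conformal compactification; this is the point at which the precise curvature expansion of Proposition 2.2 is essential.
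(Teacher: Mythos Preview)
Your overall strategy---fix a base point $o$, send each unit vector to the conformal endpoint of the corresponding ray, and verify this yields a homeomorphism---is exactly the paper's. The difference lies in the key technical device. The paper isolates a single elementary lemma: if two geodesic rays stay at bounded $g$-distance, then their $\bar g$-distance tends to $0$. This follows immediately from $\bar g = x^2 g$ together with $x\to 0$ along any escaping ray (since $|d\log x|_g$ is bounded, $x$ varies by at most a fixed multiplicative factor over any $g$-ball of bounded radius). That lemma disposes of well-definedness of the endpoint map on equivalence classes of rays in one line, and the remaining homeomorphism statement is then deferred to Proposition~1.3 of Eberlein.

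Your proposal instead attacks existence/uniqueness of the conformal endpoint and injectivity of $\Phi$ by separate direct arguments, and both are soft. For uniqueness you invoke an asymptotic relationship between horospheres and the level sets of $x$; but horospheres are intrinsic while $x$-level sets depend on the choice of defining function, so this cannot hold as stated, and in any case ``this geometric picture prevents two accumulation points'' is not yet an argument. The clean route is simply to show that the $\bar g$-arclength $\int_T^\infty x(\gamma_v(t))\,dt$ is finite (exponential decay of $x\circ\gamma_v$ follows from the curvature pinching near $\partial M$), which forces convergence in the complete space $(\bar M,\bar g)$. For injectivity you assert that $d_{\bar g}\to 0$ together with the curvature asymptotics of Proposition~2.2 yields a bound on $d_g$; this is the \emph{converse} of the paper's easy lemma and is not obvious---rays in the ball model aimed at nearby but distinct boundary points already have small $d_{\bar g}$ yet $d_g\to\infty$, so the exact coincidence of endpoints must be used in an essential way, and you have not supplied that step.
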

\begin{proof}
Let $\bar g = x^2 g$ be a smooth metric on $\bar M = M\cup \pa M$, where $x^2$ is a boundary defining function for $\pa M$. Let $O\in M$ be a fixed origin point. Then, any vector $X\in T_O M$ defines a unique point $\xi_v(X)=[\gamma_X]\in \pa_v M$ which is the equivalence class of the geodesic ray $\gamma_X$ which starts at $O$ with tangent vector $X$. Let $\xi_c(X)\in \pa M$ be the end point of $\gamma_X$ in $\pa M$. The next lemma follows immediately from the definition of $x$ as a boundary defining function. 
\begin{lemma}
Let $O$, $O'$ be two points of $M$, and $X\in T_OM$, $X'\in T_{O'}M$ such that the distance between the geodesic rays $\gamma_X$ and $\gamma_{X'}$ remains bounded up to $\pa M$. Then $d_{\bar g}(\gamma_X(t),\gamma_{X'}(t))\to 0$ as $t\to\infty$.
\end{lemma}
This ensures that if $\gamma_{X'}\in [\gamma_X]$, then $\gamma_X$ and $\gamma_{X'}$ end at the same point of $\pa M$. Therefore the map 
$$I_v : \xi_v = \xi_v(X)\in \pa_v M\mapsto \xi_c(X)$$
is well defined and invertible. It follows from Proposition 1.3 of \cite{e72} that $I_v$ extends the identity on $M$ to a homeomorphism $I_v : \bar{M}_v = M\cup \pa_v M \to \bar M = M\cup \pa M.$\end{proof}

In the following, we will use $\pa M$ to denote both the visual and the conformal boundary of simply connected manifolds, which will be implicitly identified by the above homeomorphism. We introduce few more definitions relevant to negatively curved manifolds with non-trivial topology.

\begin{defn} 
A Riemannian manifold $(M,g)$ has \emph{pinched negative curvature} if there exist constants $0 <a \leq b$ such that the sectional curvatures $K_g$ of $M$ satisfy
$$-b^2 \leq K_g\leq -a^2 <0.$$
\end{defn} 

When $(M,g)$ is convex co-compact (which will always mean in this paper that it has pinched negative curvature), it follows from the preceding proof that the convex core $C(M)$ is also the intersection of all convex subsets $C\subset M$ whose interiors $\stackrel{\circ}{C}$ are homeomorphic to $M$. 

\begin{defn}
The set $R_\Gamma = \pa \tilde{M}\backslash \Lambda_\Gamma$ is called the \emph{regular set}. It is the maximal subset of $\pa \tilde M$ on which $\Gamma$ acts freely discontinuously. We will call $\pa_v M = R_\Gamma/\Gamma$ the \emph{visual boundary} of $M$.
\end{defn}

A smooth curve $\gamma$ in $M$ converges to a point $\xi \in \pa_v M$ if and only if each of its lifts $\tilde \gamma$ converge to a point $\tilde\xi\in R_\Gamma$ which is a lift of $\xi$. We call $\bar{M}_v = M\cup \pa_v M$ equiped with this topology the \emph{visual closure} of $M$. The following proposition defines convex-compactness, which will be a crucial concept in this paper. The name ``\emph{convex co-compact}" was first used by D. Sullivan \cite{Sul79} for hyperbolic manifolds. Convex co-compactness is strongly related to the notion of \emph{geometric finiteness}, which has also been introduced for hyperbolic 3-manifolds. Geometrically finite manifolds of pinched negative curvature have been defined in detail by B. Bowditch in \cite{Bow95}; we shall not give here a complete definition of this notion. We will only be interested in the following.

\begin{prop}[Convex-cocompactness \cite{Bow95}]\label{prop:CvCpct}
Let $(M,g)$ be a complete Riemannian manifold with pinched negative curvature. The following assertions are equivalent:
\begin{enumerate}
\item the manifold $M$ is geometrically finite without parabolic cusp in the sense of \cite{Bow95}; 
\item the visual closure $\bar{M}_v = M\cup \pa_v M$ of $M$ is compact.
\item the convex core $C(M)$ is compact; 
\item there exists a \emph{convex compact subset} $K\subset M$ such that all closed geodesics of $M$ are contained in $K$; 
\item there exists a \emph{convex compact subset} $K\subset M$ whose interior $\stackrel{\circ}{K}$ is homeomorphic to $M$; 
\end{enumerate}
The manifold $(M,g)$ is said to be \emph{convex co-compact} if it satisfies one of these properties.
\end{prop}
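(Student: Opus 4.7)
The plan is to prove the five conditions equivalent by a cycle of implications. I would rely on Bowditch \cite{Bow95} for $(1) \Leftrightarrow (2)$, which is essentially his definition of geometric finiteness without parabolic cusps in pinched negative curvature, and then establish $(2) \Rightarrow (3) \Rightarrow (4) \Rightarrow (5) \Rightarrow (3)$ to close the remaining implications.

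For $(2) \Rightarrow (3)$, I would use that compactness of the visual closure forces $\pa_v M = R_\Gamma/\Gamma$ to be compact, and since $\Lambda_\Gamma$ and $R_\Gamma$ are disjoint at infinity, $CH(\Lambda_\Gamma)$ does not accumulate on $R_\Gamma$; its quotient $C(M)$ is therefore closed in $M \cup \pa_v M$ and disjoint from $\pa_v M$, hence compact. For $(3) \Rightarrow (4)$, I would take $K = C(M)$, which is convex by definition; any closed geodesic $\gamma$ lifts to the axis of a hyperbolic $g \in \Gamma$, whose two fixed points in $\pa \tilde M$ lie in $\Lambda_\Gamma$, so $\tilde\gamma \subset CH(\Lambda_\Gamma)$ and $\gamma \subset C(M)$.

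For $(4) \Rightarrow (5)$, given a convex compact $K$ containing all closed geodesics, I would enlarge it to $K' = CH(\overline{B_\epsilon(K)})$, the convex hull of a small closed tubular neighborhood. In pinched negative curvature the convex hull of a compact set is compact, so $K'$ is compact with $K$ in its interior, and no closed geodesic meets $\pa K'$. Using the standard fact that each end of a negatively curved manifold beyond a convex core with no recurrent geodesics carries a product structure, $M \setminus \mathring{K'} \cong \pa K' \times [0,\infty)$, and so $\mathring{K'} \cong M$. For $(5) \Rightarrow (3)$, I would invoke the density, noted just before the statement, of endpoints of lifts of closed geodesics in $\Lambda_\Gamma$: since $\mathring K \cong M$ forces all closed geodesics into $K$, convexity and closedness of $\tilde K$ give $\tilde K \supset CH(\Lambda_\Gamma)$, making $C(M) \subset K$ compact.

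The most delicate step is $(4) \Rightarrow (5)$: the convex enlargement must produce not just a convex compact set, but one whose interior is actually diffeomorphic to $M$, which requires verifying that each end past $K'$ is a genuine funnel with product structure. This rests on the full strength of pinched negative curvature together with the absence of cusps, and is where I would need to cite the detailed end analysis in \cite{Bow95}.
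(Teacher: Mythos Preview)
Your chain of implications does not close. With $(1)\Leftrightarrow(2)$ from Bowditch and then $(2)\Rightarrow(3)\Rightarrow(4)\Rightarrow(5)\Rightarrow(3)$, you obtain $(1)\Leftrightarrow(2)\Rightarrow(3)\Leftrightarrow(4)\Leftrightarrow(5)$, but nothing you wrote takes you from $(3)$ (or $(4)$ or $(5)$) back to $(2)$. The paper avoids this by citing Bowditch's Theorem 6.1 for the full equivalence $(1)\Leftrightarrow(2)\Leftrightarrow(3)$, not just $(1)\Leftrightarrow(2)$; you would need to do the same, or else supply a direct argument for $(3)\Rightarrow(2)$.

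Separately, your route through $(4)\Rightarrow(5)$ is genuinely harder than the paper's, and you rightly flag it as the delicate step. Asserting a product structure on the ends past a convex compact set containing all closed geodesics is close to assuming what you want: this kind of end analysis in \cite{Bow95} is developed \emph{from} geometric finiteness, so invoking it to establish $(5)$ from $(4)$ risks circularity unless you are careful about exactly which of Bowditch's lemmas you use. The paper bypasses this entirely: it proves the easy directions $(5)\Rightarrow(4)\Rightarrow(3)$ and then $(2)\Rightarrow(5)$. For $(2)\Rightarrow(5)$, one takes a compact $K_1\subset M$ homeomorphic to $\bar M_v$, lifts it, takes the convex hull $\tilde K_2$ in $\tilde M$, and uses Bowditch's Proposition 2.5.4 (convex hulls stay within a bounded neighbourhood) to conclude $K_2=\tilde K_2/\Gamma$ is compact, convex, and has interior homeomorphic to $M$. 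This is both shorter and avoids the end-structure issue.

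Your arguments for $(2)\Rightarrow(3)$, $(3)\Rightarrow(4)$, and $(5)\Rightarrow(3)$ are fine and match the paper's reasoning (the last two are essentially the paper's $(4)\Rightarrow(3)$).
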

This result is well known, even though we could not locate it in the literature. It follows directly from the arguments of \cite{Bow95}. We include a proof for completeness.
\begin{proof}
The equivalence between (1), (2) and (3) is exactly given by Theorem 6.1 of \cite{Bow95}. We will prove $(5)\Rightarrow (4)\Rightarrow (3)$ and $(2)\Rightarrow (5)$.
\begin{description}
\item[$(5)\Rightarrow (4)$]
Assume that there exists a convex compact subset $K\subset M$ whose interior $\stackrel{\circ}{K}$ is homeomorphic to $M$. Let $\gamma\subset M$ be a closed geodesic. Since $\stackrel{\circ}{K}$ is homeomorphic to $M$, there is a closed curve $\alpha\subset \stackrel{\circ}{K}$ which represents the homotopy class of $\gamma$. 
Since $K$ is convex, there is a closed geodesic $\gamma'\subset K$ which is homotopic to $\alpha$. Since in a negatively curved manifold, each homotopy class contains a unique closed geodesic, we have $\gamma = \gamma'\subset K$.
\item[$(4)\Rightarrow (3)$]
Let $\Gamma$ be the fundamental group of $M$ acting on $\tilde{M}$ by isometries and $\Lambda_\Gamma$ its limit set. 
Let $K$ be a convex compact set which contains all closed geodesics. Therefore, the lift $\tilde K\subset \tilde M$ of $K$ to the universal cover is a $\Gamma$-invariant convex subset of $\tilde M$, and its closure for the visual topology contains all points on $\pa_v \tilde M$ which are end points of lifts of closed geodesics. Therefore since these points are dense in $\Lambda_\Gamma$, and since $\tilde K$ is convex, it contains $CH(\Lambda_\Gamma)$. This implies that the convex core $C(M)$ is contained in $K$, and is therefore compact.
\item[$(2)\Rightarrow(5)$]
Let $M$ be a complete $n$-manifold with negatively pinched curvature such that its visual closure $\bar{M}_v = M\cup \pa_v M$ is compact. We denote by $\Gamma$ the fundamental group of $M$ acting on $\tilde{M}$ by isometries. There exists a compact $K_1\subset M$ which is homeomorphic to $\bar{M}_v$. Let $\tilde K_1\subset \tilde M$ be the lift of $K_1$ to the universal cover; $\tilde K_1$ is homeomorphic to the closed unit ball of $\R^{n+1}$ via a $\Gamma$-equivariant homeomorphism. Hence its convex-hull $\tilde K_2 = CH(\tilde K_1)$ is also homeomorphic to the closed unit ball of $\R^{n+1}$ via a $\Gamma$-equivariant homeomorphism. Moreover, by Proposition 2.5.4 of \cite{Bow95}, $\tilde K_2$ is contained in an $r$-neighbourhood of $\tilde K_1$, where $r>0$ only depends on the upper bound on the sectional curvatures. Therefore, $K_2 = \tilde K_2/\Gamma$ is a convex set, homeomorphic to $M$, which is contained in the $r$-neighbourhood of $K_1$. Hence $K_2$ is compact, which concludes our proof.\end{description}\end{proof}

To prove our results, we will require that our manifolds have enough complexity: their fundamental groups should not be generated by the class of a single geodesic. This property can be stated in many ways for convex co-compact manifolds.

\begin{prop}\label{prop:Abe1}
Let $(M,g)$ be a complete convex co-compact Riemannian manifold with pinched negative curvature, and $\Gamma = \pi_1(M)$. Then the following are equivalent: 
\begin{enumerate}
\item $\Gamma$ is abelian;
\item the limit set $\Lambda_\Gamma$ is finite;
\item the cardinality of $\Lambda_\Gamma$ is $2$; 
\item $\Gamma =\pi_1(M)$ is generated by the class of a single closed geodesic.
\end{enumerate}
\end{prop}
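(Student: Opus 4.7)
The plan is to prove the cyclic chain $(4) \Rightarrow (1) \Rightarrow (3) \Rightarrow (4)$ together with the equivalence $(2) \Leftrightarrow (3)$. The key tool is the standard classification of isometries in pinched negative curvature: since $(M,g)$ is convex co-compact, it has no parabolic cusps by Proposition \ref{prop:CvCpct}, so every non-trivial $\gamma \in \Gamma$ is hyperbolic (axial), possessing a unique invariant geodesic $A_\gamma$ (its axis) and exactly two fixed points $\xi_\pm(\gamma) \in \pa_v \tilde M$, which moreover lie in $\Lambda_\Gamma$. Throughout I use freely that $\Gamma$ acts freely and properly discontinuously on $\tilde M$, so no non-trivial element of $\Gamma$ has a fixed point in $\tilde M$.

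First, $(4) \Rightarrow (1)$ is immediate, since cyclic groups are abelian. For $(1) \Rightarrow (3)$, assume $\Gamma$ is non-trivial and abelian, fix any non-trivial $\gamma_0 \in \Gamma$, and set $\xi_\pm := \xi_\pm(\gamma_0)$. For any other $\gamma \in \Gamma$, the commutation relation $\gamma \gamma_0 \gamma^{-1} = \gamma_0$ forces $\gamma$ to preserve the fixed-point set of $\gamma_0$, so $\gamma$ either fixes or swaps $\xi_\pm$. In the swap case, $\gamma$ would send $A_{\gamma_0}$ to itself reversing orientation, producing a fixed point on $A_{\gamma_0}$ and contradicting freeness of the action. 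Hence $\gamma$ fixes both $\xi_\pm$; since $\gamma$ is itself hyperbolic with $\xi_\pm(\gamma) \subseteq \{\xi_+, \xi_-\}$, we conclude $\xi_\pm(\gamma) = \{\xi_+, \xi_-\}$. The limit set is the closure in $\pa_v \tilde M$ of any $\Gamma$-orbit, and since every element of $\Gamma$ fixes $\xi_\pm$, it follows that $\Lambda_\Gamma = \{\xi_+, \xi_-\}$.

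For $(3) \Rightarrow (4)$, assume $\Lambda_\Gamma = \{\xi_+, \xi_-\}$. Every non-trivial $\gamma \in \Gamma$ is hyperbolic with $\xi_\pm(\gamma) \subseteq \Lambda_\Gamma$, and the same freeness argument as above excludes the swap, so $\gamma$ fixes both $\xi_\pm$. All elements of $\Gamma$ therefore stabilize the common axis $A$ joining $\xi_-$ to $\xi_+$ and act on $A \cong \R$ by translations. The induced homomorphism $\Gamma \to (\R, +)$ is injective (any element acting trivially on $A$ would have fixed points in $\tilde M$, hence be trivial) and its image is discrete because the $\Gamma$-action on $\tilde M$ is properly discontinuous; therefore $\Gamma$ is infinite cyclic, generated by a primitive translation $\gamma_*$, and the projection of $A$ to $M$ is precisely the closed geodesic whose free homotopy class generates $\Gamma$. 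Finally, $(3) \Rightarrow (2)$ is trivial, while for $(2) \Rightarrow (3)$ we invoke the classical dichotomy for limit sets in pinched negative curvature: $\Lambda_\Gamma$ is either empty, a single fixed point of a parabolic element, a pair of points, or an uncountable perfect set. Convex co-compactness rules out the parabolic case, and since $\Gamma \neq \{1\}$ the empty case is also excluded; hence a finite non-empty $\Lambda_\Gamma$ has cardinality exactly $2$.

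The main obstacle is the careful handling of orientation-reversing elements (the ``swap'' case) in the implications landing on $(4)$: without freeness of the action, $\Gamma$ could a priori be infinite dihedral rather than cyclic, so the absence of fixed points in $\tilde M$ must be invoked explicitly. The only other non-routine ingredient is the limit-set dichotomy used for $(2) \Rightarrow (3)$, which is classical in the theory of discrete groups of isometries of Hadamard manifolds but should be cited from the literature (for instance from Eberlein--O'Neill \cite{e72} or from \cite{Bow95}).
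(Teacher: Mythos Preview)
Your proof is correct and uses essentially the same ingredients as the paper's: the classification of isometries (every non-trivial element is hyperbolic since convex co-compactness excludes parabolics and the deck action excludes elliptics), together with discreteness of the action to force cyclicity. The paper organizes the argument as three pairwise equivalences $(1)\Leftrightarrow(2)$, $(2)\Leftrightarrow(3)$, $(3)\Leftrightarrow(4)$ and is considerably terser, whereas you run a cycle $(4)\Rightarrow(1)\Rightarrow(3)\Rightarrow(4)$ plus $(2)\Leftrightarrow(3)$ and spell out the orientation-reversing (swap) obstruction and the translation-length homomorphism explicitly; this is a difference of presentation and level of detail rather than of method.
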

\begin{proof}
Let $(M,g)$ be a complete convex co-compact Riemannian manifold with pinched negative curvature. The equivalence $(1)\Leftrightarrow (2)$ is valid for any manifold with negative curvature. Indeed, if $\Gamma$ is abelian, the fixed points (at most two) of any element of $\Gamma$ are invariant by $\Gamma$, therefore the limit set is contained in these points. The converse comes immediately from the classification of isometries for negatively curved spaces, cf \cite{Bow95}.

Since $(M,g)$ is convex co-compact, $\Gamma =\pi_1(M)$ can not contain any parabolic element. Therefore, $(2)\Leftrightarrow(3)$. 

Finally, since $\Gamma = \pi_1(M)$ acts discretely on the universal cover of $M$, we have $(3)\Leftrightarrow(4)$.
\end{proof}

We will now show that for manifolds with constant negative curvature, convex-compact and conformally compact are equivalent notions. 

\begin{prop}\label{prop:HypConfCpct}
Let $(M,g)$ be a real hyperbolic $n$ dimensional manifold. Then $g$ is conformally compact if and only if $(M,g)$ is convex co-compact.
\end{prop}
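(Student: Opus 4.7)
The plan for the forward implication is straightforward: it is a special case of Theorem \ref{th:ConfCvxCpct}, since a real hyperbolic metric has (constant) negative sectional curvatures. Alternatively, one could give a direct argument by identifying the conformal and visual closures of the universal cover (via the canonical homeomorphism established earlier in the section), whereupon $\overline{M}$ compact translates into $M \cup \partial_v M$ compact, and criterion (2) of Proposition \ref{prop:CvCpct} yields convex co-compactness.

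For the converse, the main work is to build an explicit conformal compactification from the convex co-compact data. I would work in the Poincar\'e ball model $\widetilde M = B^{n+1}$ with $g = 4(1-|z|^2)^{-2}\, g_{\mathrm{eucl}}$, where $x_0(z) = (1-|z|^2)/2$ is a boundary defining function for $S^n = \partial B^{n+1}$ and, crucially, the rescaled metric $x_0^2 g = g_{\mathrm{eucl}}$ extends smoothly across $S^n$. Writing $M = B^{n+1}/\Gamma$, the first step is to verify that $\Gamma$ acts smoothly, freely, and properly discontinuously on $B^{n+1} \cup R_\Gamma$. Proper discontinuity on $R_\Gamma$ is the very definition of the regular set, and freeness on $R_\Gamma$ holds because any nontrivial $\gamma\in\Gamma$ fixing a point of $S^n$ would be hyperbolic or parabolic; convex co-compactness forbids parabolic elements, and hyperbolic elements have fixed points in $\Lambda_\Gamma$, not in $R_\Gamma$. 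Thus $\overline{M} := (B^{n+1}\cup R_\Gamma)/\Gamma$ is a smooth manifold with boundary $R_\Gamma/\Gamma$, and by criterion (2) of Proposition \ref{prop:CvCpct} it is compact.

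With this smooth structure in hand, I would choose any smooth boundary defining function $x:\overline{M} \to [0,\infty)$ (these exist on any smooth compact manifold with boundary, e.g.\ via a partition of unity and a collar neighborhood) and lift it to a $\Gamma$-invariant boundary defining function $\widetilde x$ on $B^{n+1}\cup R_\Gamma$. Near each boundary point, $\widetilde x$ and $x_0$ are two boundary defining functions for the same boundary, so locally $\widetilde x = \phi\, x_0$ for some smooth positive $\phi$. Consequently $\widetilde x^{\,2} g = \phi^2\, x_0^2 g = \phi^2\, g_{\mathrm{eucl}}$ is smooth up to $R_\Gamma$, and $\Gamma$-invariance lets this descend to a smooth extension of $x^2 g$ across $R_\Gamma/\Gamma = \partial \overline M$, proving that $g$ is conformally compact. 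The step I expect to be the most delicate is the very first one: ensuring that $\overline M$ is genuinely a smooth manifold with boundary, which is what makes a global smooth boundary defining function available. That step rests essentially on the no-parabolics property of convex co-compact groups and on the smoothness of Möbius transformations extended to $S^n$.
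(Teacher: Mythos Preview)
Your proposal is correct and structurally mirrors the paper's proof: both directions are handled the same way, with conformally compact $\Rightarrow$ convex co-compact deferred to Theorem~\ref{th:ConfCvxCpct}, and convex co-compact $\Rightarrow$ conformally compact established by building a conformal compactification of $\mathbb{H}^n/\Gamma$ via the quotient of $B\cup R_\Gamma$.

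The one difference is that the paper outsources the converse construction entirely to Paragraph~3.1 of Patterson--Perry \cite{pp}, whereas you spell out the argument in the ball model: free and properly discontinuous action of $\Gamma$ on $B\cup R_\Gamma$ (using that convex co-compact groups have no parabolics), compactness of the quotient via criterion~(2) of Proposition~\ref{prop:CvCpct}, and then comparing an arbitrary smooth boundary defining function on $\overline{M}$ to the Euclidean one $x_0=(1-|z|^2)/2$ to conclude that $x^2g$ extends smoothly. Your version is self-contained and makes transparent exactly where the convex co-compactness hypothesis enters; the paper's version is shorter but opaque unless one consults the reference. One small point you gloss over: proper discontinuity is needed on the \emph{union} $B\cup R_\Gamma$, not just on $R_\Gamma$ separately, but this is a standard fact for Kleinian groups and not a genuine gap.
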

\begin{proof}
First, we let $(M,g) = H^n/\Gamma$ be a convex co-compact hyperbolic manifold. By Proposition \ref{prop:CvCpct}, its visual closure is compact. Therefore, the construction of Paragraph 3.1 of \cite{pp} applies and shows that $(M,g)$ is conformally compact. 
We will prove in Theorem \ref{th:ConfCvxCpct} below that a conformally compact manifold with pinched negative curvature is always convex co-compact, which ends the proof of this proposition.
\end{proof}

For \em surfaces \em with variable negative curvature, convex co-compact and conformally compact are actually equivalent notions:

\begin{prop}\label{prop:surface}
Let $(S,g)$ be a complete (open) Riemannian surface with pinched negative curvature. Then $g$ is conformally compact if and only if $(S,g)$ is convex co-compact.
\end{prop}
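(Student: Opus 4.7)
The direction ``conformally compact implies convex co-compact'' is a special case of Theorem~\ref{th:ConfCvxCpct}, so the remaining task is the converse: if $(S,g)$ is convex co-compact with pinched negative curvature, then $g$ is conformally compact.

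My plan is to reduce to the hyperbolic case already handled by Proposition~\ref{prop:HypConfCpct} via uniformization. The conformal class of $g$ makes $S$ into an open Riemann surface; since the universal cover $(\tilde S,\tilde g)$ is a Hadamard surface, the uniformization theorem identifies $\tilde S$ conformally with the unit disk $\mathbb{D}$. The deck group $\Gamma=\pi_1(S)$ then acts on $\mathbb{D}$ by biholomorphisms, i.e.\ by orientation-preserving hyperbolic isometries, so passing to the quotient yields a complete hyperbolic metric $g_0$ on $S$ in the conformal class of $g$. Thus $g=e^{2\phi}g_0$ for some smooth $\phi\colon S\to\R$.

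First I would check that $(S,g_0)$ is itself convex co-compact. The key point is that the limit set depends only on the $\Gamma$-action on the visual compactification, not on the specific pinched negatively curved metric one chooses: the identification $\partial_v\tilde S\cong\partial\mathbb{D}$ furnished by the preceding propositions is $\Gamma$-equivariant, so $\Lambda_\Gamma$ computed with respect to $g_0$ coincides with that for $g$. Combined with the absence of parabolic elements in $\Gamma$, inherited from the convex-cocompactness of $g$, criterion~(1) of Proposition~\ref{prop:CvCpct} applies to $g_0$. By Proposition~\ref{prop:HypConfCpct}, there is a smooth compactification $\bar S=S\cup\partial S$ and a boundary defining function $x_0$ such that $\bar g_0:=x_0^2 g_0$ extends smoothly to $\bar S$. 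The natural candidate for a conformal compactification of $g$ is then $x:=x_0 e^{-\phi}$, since $x^2 g=\bar g_0$; the proof is complete provided $e^{-\phi}$ admits a smooth positive extension to $\bar S$.

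The main obstacle is this boundary regularity of $\phi$. In dimension two, the Gauss-curvature transformation law gives
\[\Delta_{g_0}\phi = K_{g_0}-e^{2\phi}K_g = -1-e^{2\phi}K_g,\]
whose right-hand side is uniformly bounded thanks to the pinched bounds $-b^2\leq K_g\leq -a^2$. The operator $\Delta_{g_0}$ is degenerate-elliptic at $\partial\bar S$ in the sense of Mazzeo-Melrose (it is a $0$-elliptic operator), and a bootstrap argument --- either via the $0$-calculus or by working explicitly in the Fermi coordinates around each boundary geodesic of $g_0$, in which $g_0$ takes the model form $dr^2+\ell^2\cosh^2(r)\,dt^2$ --- combined with a priori $L^\infty$ bounds on $\phi$ obtained by area comparison, yields $\phi\in C^\infty(\bar S)$. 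Once this regularity is in hand, $x=x_0 e^{-\phi}$ is a valid boundary defining function and the conformal compactification of $(S,g)$ follows. This last step is the subtle part of the argument, and it is also where the dimension-two hypothesis is crucial: the scalar Gauss-curvature equation has no direct analogue in higher dimensions, which is precisely why the converse in Theorem~\ref{th:ConfCvxCpct} cannot be carried out there without extra hypotheses.
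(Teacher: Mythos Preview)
Your overall strategy matches the paper's: uniformize to a hyperbolic metric $g_0$ in the same conformal class, verify that $(S,g_0)$ is convex co-compact and hence conformally compact by Proposition~\ref{prop:HypConfCpct}, and then transfer the compactification back to $g$ through the conformal factor. The paper handles this last step differently and more economically: instead of any elliptic bootstrap, it invokes Yau's Schwarz Lemma \cite{Yau73}, which from the two-sided bounds $-b^2\le K_g\le -a^2$ directly yields that the conformal factor $u=e^{-\phi}$ is pinched between two positive constants, and then passes between boundary defining functions for $g$ and for $g_H$ by multiplying or dividing by $u$.

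Your proposed regularity argument has a genuine gap. The right-hand side of $\Delta_{g_0}\phi=-1-e^{2\phi}K_g$ involves $K_g$, about which the hypothesis ``pinched negative curvature'' says only that $-b^2\le K_g\le -a^2$ on the interior $S$; it gives no control whatsoever on the behaviour of $K_g$ as one approaches $\partial\bar S$. Indeed, by Proposition~\ref{prop:SecInfty} the existence of a limit for $K_g$ at each boundary point is essentially a \emph{consequence} of conformal compactness, which is exactly what you are trying to establish---so any assumption of boundary regularity on $K_g$ sufficient to run the bootstrap would be circular. With only an $L^\infty$ source term, no $0$-elliptic or Fermi-coordinate argument will produce $\phi\in C^\infty(\bar S)$, and hence your candidate $x=x_0 e^{-\phi}$ cannot be shown to be a smooth boundary defining function by this route. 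The Schwarz-Lemma approach sidesteps this difficulty because it extracts the needed bound on $u$ purely from interior comparison geometry, with no reference to the boundary behaviour of $K_g$.
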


\begin{proof}
Let $(S,g)$ be a complete open Riemannian surface with boundary and pinched negative curvature. Then, there exists a unique hyperbolic metric $g_H$ on $S$ conformally equivalent to $g$, i.e. $g_H = u^2g$, where $u$ is a positive function on $M$. Since the curvature of $(S,g)$ is bounded, it follows from Yau's Schwarz Lemma (cf \cite{Yau73}, Theorem 1 p. 373) that $u$ is pinched between positive constants. Then, if $x$ is a boundary defining function, $x/u$ and $ux$ are also boundary defining functions, so it follows directly from the definition that $(S, g)$ is conformally compact if and only if $(S, g_H)$ is. 

Thus, we are reduced to showing that a complete, open hyperbolic surface $(S, g_H)$ is conformally compact if and only if it is convex co-compact, but this follows from the preceding proposition. 
\end{proof}

The case of manifolds with arbitrary dimension and variable negative curvature is less understood. We first observe the following useful relationship between the two notions conformally compact and convex co-compact for manifolds of arbitrary dimension. Although we expect this result is known, we have not seen a proof in the literature. 

\textbf{Proof of Theorem \ref{th:ConfCvxCpct}}
Let $(M,g)$be a conformally compact manifold with negative sectional curvatures. It follows from Proposition \ref{prop:SecInfty} that $(M,g)$ has pinched negative curvature. Let $x: M \to (0, \infty)$ be a boundary defining function. It follows from Corollary 5 of Bahuaud \cite{ba09} that for $\epsilon > 0$ sufficiently small, 
$$K_{\epsilon} := \left\{ x^{-1} [\epsilon, \infty) \right\}$$
contains all closed geodesics of $M$. Therefore, the first assertion of the theorem then follows from Proposition \ref{prop:CvCpct} (4). In case $M$ is a surface or if $g$ has constant sectional curvatures, the converse statement follows from Propositions \ref{prop:HypConfCpct} and \ref{prop:surface}. 
\qed

\begin{remark}
By Proposition \ref{prop:SecInfty}, the sectional curvatures of any conformally compact manifold converge to a limit at each point of the boundary. This is obviously \emph{not} satisfied by all convex co-compact manifolds with negative curvature. One of the simplest examples is given by the complex hyperbolic space $\bbH_\C^n$, which is homeomorphic to the interior of the unit ball. Its canonical metric is Einstein, and at any point, the sectional curvatures span the interval $[-4, -1]$. Thus, there is no scalar function to which the sectional curvatures converge at the visual boundary. 
Any convex co-compact manifolds which are also complex hyperbolic (quotients of $\bbH_\C^n$) do not satisfy Proposition \ref{prop:SecInfty} and are not conformally compact.
\end{remark} 

We conclude this section with physical motivation to study conformally compact manifolds. Recall that a Riemannian metric is \em Einstein \em if the Ricci and metric tensors satisfy the relationship 
$$\textrm{Ric} = c g,$$
for some constant $c$. A metric which is both conformally compact and Einstein, is called a \emph{Poincar\'e Einstein metric}; it follows from Proposition \ref{prop:SecInfty} that $c<0$. Examples of the Poincar\'e Einstein metrics which arise in AdS/CFT correspondence in string theory include the hyperbolic analogue of the Schwarzschild metric, and in dimension $4$, the Taub-BOLT metrics on disk bundles over $\bbS^2$. For these and further examples, (see M. Anderson \cite{and2}).

\section{The geodesic flow}\label{sec:GeoFlow}
Let $(M,g)$ be a complete Riemannian manifold with unit tangent bundle $S^gM$, and let $S^g_xM$ denote its fiber over a point $x\in M$. We will omit the metric $g$ when no confusion shall arise. The \emph{geodesic flow} on $(M,g)$ is the map $\Phi^g : S^gM \times \reals\to S^gM$ which maps $v\in S_x M$ and $t\in\reals$ to 
$$\Phi_t(v) = \dot{\gamma}_v(t),$$
where $\gamma_v$ is the geodesic starting from $x$ with initial tangent vector $v$, and $\dot{\gamma}_v(t)$ is its tangent vector at the point $\gamma_v(t)$. The \emph{orbit} of a vector $v\in S_xM$ is $\{ \Phi_t (v) | t \in \reals \}$. The restriction of the canonical projection to the orbit of $v$ is bijective onto the geodesic $\gamma_v \subset M$. Therefore, we will often identify an orbit of the flow with the geodesic in $M$ onto which it projects. 

\begin{defn}
Let $(M,g)$ be a complete Riemannian manifold. A vector $v\in S^gM$ is \emph{non-wandering} for the geodesic flow $\Phi^g$ if for all neighbourhoods $\cO$ of $v$ in $SM$, there exists a sequence $(t_n)\in\reals^{\N}$ with $t_n\to+\infty$ when $n\to+\infty$ such that for all $n\in\N$, 
$$\cO\cap \Phi^g_{t_n}\cO \neq \emptyset.$$
The set of non-wandering vectors of $SM$ (for the geodesic flow) denoted by $\Omega(g)$ is called the \emph{non-wandering set}.
\end{defn}

Let $\Omega\subset SM$ be a closed $\Phi_t$-invariant set. The flow is called \emph{uniformly hyperbolic} on $\Omega$ if there exists $\lambda>0$ such that for each $v \in \Omega$, $T(SM)_{v}$ splits into a direct sum 
$$ T(SM)_{v} = E^s _{v} \oplus E^u _{v} \oplus E_{v},$$
such that $E_v$ is the tangent space of $\{\Phi_t(v)\}_{t\in\mathbb{R}}$, and for all $t\geq 0$,
\begin{eqnarray}\label{eq:HypFlow}
\norm{d\Phi_t(\xi)} & \leq e^{-\lambda t}\norm{\xi} & \mbox{ if } \xi\in E^s _{v}\\
\norm{d\Phi_{-t}(\xi)} & \leq e^{-\lambda t}\norm{\xi} & \mbox{ if } \xi\in E^u _{v}.
\end{eqnarray}
The constant $\lambda$ is called the \emph{expansion factor} of the flow on $\Omega$.

These definitions are \em local, \em depending only on properties of the flow at a point or restricted to a closed set, respectively. We shall see that certain aspects of the dynamics of compact manifolds which only depend on these local conditions can be extended to infinite volume manifolds. 
For example, the hyperbolicity factor $\lambda$ is related to the curvature pinching constants as follows. If the sectional curvatures $K_g$ satify $-b^2 \leq K_g \leq - a^2$ on a $\Phi_t$-invariant subset $\Omega\subset SM$, then the geodesic flow is uniformly hyperbolic on $\Omega$, with expansion satisfying $a \leq \lambda \leq b$. This follows from Theorem 3.9.1 of \cite{Kli82} p.364, which holds for complete manifolds, so we can use it for convex co-compact manifolds. 

The following definition lists properties of the geodesic flow on convex co-compact manifolds which are crucial for our arguments. 

\begin{defn}
Let $(M,g)$ be a Riemannian manifold and $(\Phi_t)_{t\in\R}$ be its geodesic flow.
\begin{enumerate}
\item The geodesic flow satisfies S. Smale's \em Axiom A \em \cite{sm} if its non-wandering set $\Omega(g)$ is compact, the flow is uniformly hyperbolic on $\Omega(g)$ and the periodic orbits of the flow are dense in $\Omega(g)$. 

\item Let $\mathcal B\subset SM$ be a closed set of the unit tangent bundle. The flow is \em topologically transitive \em on $\mathcal B$ if for any open $U$ and $V \subset\mathcal B$, there exists $t > 0$ such that $\Phi_t (U) \cap V \neq \emptyset$. 

\item The flow is \emph{topologically mixing} on $\mathcal B$ if for any open $U$ and $V \subset \mathcal B$, there exists $T > 0$ such that for all $t>T$, $\Phi_t (U) \cap V \neq \emptyset$. 
\end{enumerate}
\end{defn}

We emphasize the fact that we do not require the flow to be topologically transitive or mixing on the whole tangent bundle, but only \emph{when restricted to a closed subset}, which in our applications will be the non-wandering set. 

A \em basic hyperbolic set \em $\mathcal B\subset \Omega$ of an Axiom A flow $\Phi_t$ is a closed subset of the non-wandering set $\Omega$ such that $\Phi_t |\mathcal B$ is topologically transitive. This definition of \em basic set \em is due to R. Bowen \cite{Bow73}.

Some of our proofs rely on the following observation. 

\begin{prop}[Eberlein]\label{prop:AxiomA}
Let $(M,g)$ be a convex co-compact manifold with pinched negative curvature whose fundamental group is not Abelian. Then its geodesic flow is an Axiom A flow whose unique basic hyperbolic set is the non-wandering set $\Omega(g)$. Moreover, the flow is topologically transitive on $\Omega(g)$.
\end{prop}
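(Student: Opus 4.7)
The plan is to verify the three Axiom A conditions in order, then deduce topological transitivity from the non-Abelian hypothesis, and finally argue uniqueness of the basic set.

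First I would establish that $\Omega(g)$ is compact. The standard characterization, originating in Eberlein's work \cite{e72}, is that a vector $v \in SM$ is non-wandering for the geodesic flow precisely when both endpoints of the lift of $\gamma_v$ in $\pa_v \tilde M$ lie in the limit set $\Lambda_\Gamma$. Consequently every non-wandering orbit is contained in the projection of $CH(\Lambda_\Gamma)$, i.e.\ in $S^g C(M)$. Since $(M,g)$ is convex co-compact, Proposition~\ref{prop:CvCpct} (3) gives that $C(M)$ is compact, hence $\Omega(g) \subset S^g C(M)$ is compact.

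Next, uniform hyperbolicity on $\Omega(g)$ is essentially free from the pinched curvature hypothesis. For a convex co-compact manifold of pinched negative curvature $-b^2 \leq K_g \leq -a^2$, the stable/unstable Jacobi field construction on any Hadamard manifold of negative curvature produces a $d\Phi_t$-invariant splitting $T(SM)_v = E^s_v \oplus E^u_v \oplus E_v$ along every geodesic; the pinching bounds on $K_g$, combined with Theorem~3.9.1 of \cite{Kli82} already invoked in the text, yield the exponential expansion/contraction estimates \eqref{eq:HypFlow} with a uniform rate $\lambda \in [a,b]$. Density of the periodic orbits inside $\Omega(g)$ follows by a standard shadowing/closing argument: endpoints in $\pa_v \tilde M$ of lifts of closed geodesics are dense in $\Lambda_\Gamma$ (Proposition~2.6 of \cite{e72}, already used above), and any non-wandering vector $v$ has lifted geodesic with endpoints in $\Lambda_\Gamma$, so approximating both endpoints simultaneously by endpoints of lifts of closed geodesics produces closed orbits converging to $v$ in $SM$. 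Together, these three facts show that $\Phi^g$ is Axiom A with $\Omega(g)$ as its non-wandering set.

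For topological transitivity on $\Omega(g)$, the non-Abelian hypothesis is crucial. By Proposition~\ref{prop:Abe1}, non-Abelianness of $\Gamma = \pi_1(M)$ forces $|\Lambda_\Gamma| = \infty$, so $\Lambda_\Gamma$ is an uncountable perfect set in $\pa_v \tilde M$. Using the density of pairs of endpoints of lifts of closed geodesics in $\Lambda_\Gamma \times \Lambda_\Gamma$ away from the diagonal (Eberlein \cite{e72}), together with the fact that the geodesic flow in negative curvature is ``minimal on pairs of endpoints,'' one shows that given any two open sets $U,V \subset \Omega(g)$, one can find a single orbit whose endpoints lie arbitrarily close to those of geodesics through $U$ and $V$. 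Pushing forward by the flow, this produces $t>0$ with $\Phi_t(U) \cap V \neq \emptyset$, which is exactly topological transitivity on $\Omega(g)$. Uniqueness of the basic hyperbolic set is then immediate: the Smale spectral decomposition writes $\Omega(g)$ as a disjoint union of basic sets, but a transitive flow cannot have a non-trivial invariant decomposition, so the only basic set is $\Omega(g)$ itself.

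The step I expect to be the main obstacle is verifying topological transitivity rigorously. The density of pairs of endpoints and the non-arithmetic structure one needs to combine lifts of closed geodesics into a single approximating orbit are classical for convex co-compact hyperbolic manifolds, but transferring the argument to variable pinched negative curvature requires careful use of Eberlein's analysis of the visual topology on $\bar M_v$ and of how the flow interacts with the product structure $\Lambda_\Gamma \times \Lambda_\Gamma \setminus \textrm{diag}$. Everything else is essentially bookkeeping once compactness of $C(M)$ and the uniform hyperbolicity estimates are in hand.
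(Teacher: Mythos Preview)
The paper does not actually supply its own proof of this proposition: it is stated, attributed to Eberlein, and followed immediately by a remark about Eberlein's Theorem~6.2 in \cite{e73} and then Dal'bo's characterization of mixing. So there is nothing in the text to compare your argument to beyond the citation itself.

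That said, your outline is essentially the standard Eberlein route and is sound in structure: compactness of $\Omega(g)$ from the endpoint characterization plus compactness of $C(M)$, uniform hyperbolicity from the Jacobi field/Rauch comparison estimates in \cite{Kli82}, density of periodic orbits and transitivity from the density of hyperbolic fixed-point pairs in $\Lambda_\Gamma\times\Lambda_\Gamma$ off the diagonal, and then Smale's spectral decomposition to get uniqueness of the basic set. Two small points worth tightening if you write this up: in the density-of-periodic-orbits step you need that \emph{pairs} of endpoints of a \emph{single} axis are dense in $\Lambda_\Gamma\times\Lambda_\Gamma\setminus\mathrm{diag}$, not just that endpoints are dense in $\Lambda_\Gamma$ separately; and in the transitivity paragraph the phrase ``non-arithmetic structure'' is a slip --- transitivity needs only the density of axis endpoint pairs (which follows from $|\Lambda_\Gamma|=\infty$ and minimality of the $\Gamma$-action), while non-arithmeticity is only relevant for \emph{mixing}, which is the content of Theorem~\ref{theo:Dalbo} immediately following.
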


Eberlein proved in Theorem 6.2 of \cite{e73} that if $(M,g)$ is \emph{any} complete manifold with pinched negative curvature and non-Abelian fundamental group, and if there exists two closed geodesics $\gamma, \gamma'$ such that $\ell(\gamma)/\ell(\gamma')\notin\bbQ$, then the geodesic flow is topologically mixing on its non-wandering set. A complete characterization was obtained in\cite{Dal}.

\begin{theorem}[Dalb'o, 2000]\label{theo:Dalbo}
Let $(M,g)$ be a complete Riemannian manifold with pinched negative curvature and non-Abelian fundamental group. Then the geodesic flow is topologically mixing on its non-wandering set \emph{if and only if the length spectrum is non-arithmetic}, i.e. if the lengths of all closed geodesics generates a dense subgroup of $\R$.
\end{theorem}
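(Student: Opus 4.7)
The plan is to prove the two implications separately. The easier direction is \emph{topologically mixing $\Rightarrow$ non-arithmetic length spectrum}, which I would handle by contraposition. Assume the lengths of closed geodesics lie in $c\Z$ for some $c>0$. Pick any closed geodesic $\gamma_0$ and a small open flow box $U \subset \Omega(g)$ transverse to the flow around a point of $\gamma_0$. Set $V = \Phi_{c/2}(U)$. Using the uniform hyperbolicity near $\gamma_0$ supplied by Proposition \ref{prop:AxiomA}, one shows that any orbit starting in $U$ that returns to $\Omega(g)$ in a nearby region does so at a time close to an element of $c\Z$ (because every nearby closed orbit has period in $c\Z$ and the closing lemma estimates return times by periods of nearby closed orbits). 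Hence $\Phi_t(U)\cap V$ is empty for $t$ outside a neighborhood of $c/2+c\Z$, contradicting the mixing definition.

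For the harder direction, \emph{non-arithmetic $\Rightarrow$ mixing on $\Omega(g)$}, I would follow Dal'bo's strategy, which refines Eberlein's Theorem 6.2 in \cite{e73}. The starting point is that Proposition \ref{prop:AxiomA} gives a uniformly hyperbolic basic set with topologically transitive flow and the associated local product structure: each $v \in \Omega(g)$ admits a neighborhood in $\Omega(g)$ parametrized by local strong stable, strong unstable, and flow coordinates. Given open sets $U,V\subset \Omega(g)$, transitivity yields some $t_0$ with $\Phi_{t_0}(U)\cap V\neq \emptyset$. A negative curvature closing lemma (available because of the uniform pinching $-b^2\le K_g\le -a^2$) then produces a closed geodesic $\gamma$ whose orbit passes through $U$ and $V$ at times close to $0$ and $t_0$, with $\ell(\gamma)$ controlled.

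The non-arithmeticity is now injected as follows. By hypothesis, the subgroup of $\R$ generated by $\{\ell(\alpha):\alpha\in\cL\}$ is dense. Using the local product structure one concatenates the orbit found above with auxiliary closed geodesics $\gamma_1,\dots,\gamma_k$ via stable/unstable jumps in the spirit of Anosov's specification property: for every $\epsilon>0$ and every choice of integers $n_1,\dots,n_k$ one produces a true orbit of $\Phi_t$ connecting $U$ to $V$ in time $t_0+\sum n_i\ell(\gamma_i)+O(\epsilon)$. Since the subgroup generated by the $\ell(\gamma_i)$ is dense, the set of such realizable times contains a half-line $[T_1,\infty)$ up to error $\epsilon$. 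Letting $\epsilon$ shrink and iterating, $\Phi_t(U)\cap V\neq\emptyset$ for all sufficiently large $t$, which is precisely topological mixing on $\Omega(g)$.

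The main obstacle is the closing and specification step in the generality of the statement, where $M$ is only assumed complete with pinched negative curvature (no convex co-compactness); the non-wandering set need not be compact, so Bowen's classical specification lemma does not apply verbatim. The pinched negative curvature nonetheless gives \emph{uniform} hyperbolic constants on $\Omega(g)$, which is what Dal'bo exploits to run a local version of specification and to turn density of the length subgroup into density of return times. Once these dynamical building blocks are in place, the deduction of mixing is essentially bookkeeping, and the forward direction is the short contrapositive sketched above.
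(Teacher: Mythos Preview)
The paper does not contain a proof of this theorem: it is stated as a result of Dal'bo and attributed to \cite{Dal}, with no argument given in the text. There is therefore nothing in the paper to compare your proposal against.

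That said, a few comments on the proposal itself. Your invocation of Proposition~\ref{prop:AxiomA} is not legitimate here: that proposition assumes $(M,g)$ is convex co-compact, whereas Theorem~\ref{theo:Dalbo} is stated for arbitrary complete manifolds with pinched negative curvature and non-Abelian fundamental group. You acknowledge this in your final paragraph, but the issue is more than a technicality---without compactness of $\Omega(g)$ you lose the Axiom~A framework, Bowen's specification, and the uniform local product structure you rely on throughout the harder direction. Dal'bo's actual argument in \cite{Dal} proceeds differently: she shows that topological mixing of the geodesic flow on $\Omega(g)$ is equivalent to topological transitivity of the \emph{strong stable horocyclic foliation}, and then relates minimality/transitivity of that foliation to non-arithmeticity of the length spectrum via the structure of the limit set and Busemann cocycles. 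This avoids any appeal to Axiom~A or specification and works in the stated generality. Your outline is closer in spirit to the Parry--Pollicott/Bowen symbolic-dynamics route, which is fine for the convex co-compact case treated elsewhere in the paper but does not cover the full statement of Theorem~\ref{theo:Dalbo}.
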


It has been shown that the length spectrum of a complete manifold $(M,g)$ with pinched negative curvature and non-Abelian fundamental group is non-arithmetic when: either $M$ is compact; or $M$ is a surface; or $M$ has constant sectional curvatures;
or if the fundamental group of $M$ contains a parabolic element. 

The references for these results are given in \cite{Dal}, p 982. Therefore, for a general convex co-compact manifold with pinched negative curvature and non-Abelian fundamental group, it is in general unknown whether the geodesic flow is topologically mixing or not. \em The authors thank B. Schapira for pointing out that the geodesic flow is still not known to be mixing on convex co-compact manifolds with variable curvature, even though most experts believe that it should be. \em 

\subsection{The critical exponent, entropy and pressure}

Let $(M,g)$ be a complete manifold with pinched negative curvature, with fundamental group $\Gamma$ acting by isometries on the universal cover $(\tilde M,g)$. Given two points $x,y\in \tilde M$, the \emph{Poincar\'e series of $\Gamma$} (in $x$ and $y$) is 
$$P(x,y,s) := \sum_{\gamma \in \Gamma} \exp\left( - s d_g(x, \gamma y) \right),$$
where $d_g$ is the Riemannian distance induced by $g$. The critical exponent $\delta(\Gamma)$ is defined so that the Poincar\'e series converges for $s > \delta (\Gamma)$ and diverges for $s < \delta (\Gamma)$. It can be easily checked that the critical exponent is well defined and does not depend of the base points $x$ and $y$.

This critical exponent is strongly related to the \emph{topological entropy} of the geodesic flow. Let us recall its definition: for any large $T>0$ and small $\delta > 0,$ a finite set $Y \subset SX$ is called $(T, \delta)$ separated if, given $\xi, \xi' \in Y, \xi \neq \xi',$ there is $t \in [0, T]$ with $d(\Phi_t \xi, \Phi_t \xi') \geq \delta.$ Here the distance on $SX$ is given by the Sasaki metric. 
\begin{defn} Let $(X, g)$ be convex co-compact with pinched negative curvature. Let $\Omega \subset SX$ be the non-wandering set of the geodesic flow. The \em topological entropy of the geodesic flow \em is 
$$h(g) := \lim_{\delta \to 0} \limsup_{T \to \infty} \frac{ \log \sup \# \{ Y \subset \Omega: Y \textrm{ {\rm is} $(T, \delta)$ {\rm separated}} \}}{T}.$$
\end{defn} 

Otal and Peign\'e proved in \cite{Ot-Pei} the following.

\begin{theorem}[Otal-Peign\'e, 04]
Let $(M,g) = (\tilde M,g)/ \Gamma$ be a convex co-compact manifold with pinched negative curvature. Then the critical exponent of the Poincar\'e series of $\Gamma$ and the topological entropy of the geodesic flow restricted to the non-wandering set coincide.
\end{theorem}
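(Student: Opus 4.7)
The plan is to establish the two inequalities $h(g)\le \delta(\Gamma)$ and $h(g)\ge \delta(\Gamma)$ separately. The upper bound is an orbit-counting argument of Manning type, while the lower bound requires constructing a flow-invariant probability measure on the non-wandering set whose measure-theoretic entropy equals $\delta(\Gamma)$ and invoking the variational principle (available by Proposition \ref{prop:AxiomA}).

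For the upper bound, I would fix a base point $\tilde x$ in the convex hull $CH(\Lambda_\Gamma)\subset \tilde M$. Since $M$ is convex co-compact, Proposition \ref{prop:CvCpct} gives that the convex core $C(M)$ is compact, and the footpoint of every non-wandering vector lies in $C(M)$. Given $\delta>0$ small and $T>0$ large, let $Y\subset \Omega$ be a maximal $(T,\delta)$-separated set. Lift each $v\in Y$ to $\tilde v\in S^g\tilde M$ whose footpoint lies within a uniformly bounded distance $D$ of $\tilde x$, and let $\tilde\gamma_v$ denote the associated geodesic, so that $\tilde\gamma_v(T)\in B(\tilde x, T+D)$. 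Using the uniform hyperbolicity on $\Omega$ together with the pinched curvature bounds, the $(T,\delta)$-separation of $v,v'$ forces a lower bound (depending only on $\delta$, $a$, $b$) on $d(\tilde\gamma_v(T),\tilde\gamma_{v'}(T))$. Associating each $v$ to a $\gamma\in\Gamma$ with $\gamma\tilde x$ closest to $\tilde\gamma_v(T)$, each $\gamma$ is used at most $C(\delta)$ times, and $d(\tilde x,\gamma\tilde x)\le T+2D$. Hence
$$\# Y \;\le\; C(\delta)\,\#\{\gamma\in\Gamma : d(\tilde x,\gamma\tilde x)\le T+2D\},$$
which is bounded by $C_\epsilon e^{(\delta(\Gamma)+\epsilon)T}$ for every $\epsilon>0$, by the definition of $\delta(\Gamma)$. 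Taking $T\to\infty$ then $\epsilon\to 0$ yields $h(g)\le \delta(\Gamma)$.

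For the lower bound, I would construct a Patterson-Sullivan conformal density $\{\mu_x\}_{x\in\tilde M}$ of dimension $\delta(\Gamma)$ supported on the limit set $\Lambda_\Gamma$, following Patterson's convergence-factor method adapted to pinched variable negative curvature. Using Hopf coordinates on $SM$, I would assemble $\{\mu_x\}$ into a flow-invariant Bowen-Margulis-Sullivan measure $m_{BMS}$ on $SM$; convex co-compactness makes it finite, so it can be normalized to a probability measure. Since $\mu_x$ is supported on $\Lambda_\Gamma$, $m_{BMS}$ is supported on bi-infinite geodesics whose endpoints both lie in $\Lambda_\Gamma$, which is exactly the non-wandering set $\Omega$. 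Sullivan's shadow lemma, suitably adapted, then yields the Bowen ball estimate
$$m_{BMS}\bigl(B_T(v,\delta)\bigr) \;\asymp\; e^{-\delta(\Gamma)T}$$
uniformly for $v$ in the support and $\delta>0$ small, with $B_T(v,\delta)=\{w:d(\Phi_tv,\Phi_tw)\le\delta,\ 0\le t\le T\}$. The Brin-Katok local entropy formula gives $h_{m_{BMS}}(\Phi_1)=\delta(\Gamma)$, and the variational principle for Axiom A flows on their basic sets yields $h(g)\ge h_{m_{BMS}}(\Phi_1)=\delta(\Gamma)$.

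The main obstacle is the lower bound: both the construction of the Patterson-Sullivan density and the shadow lemma must be carried out in the pinched \emph{variable} curvature setting. In constant curvature the shadow lemma is elementary, but in variable curvature one loses the uniform "roundness" of spheres in $\tilde M$, so one must use delicate comparison estimates involving both curvature bounds $a$ and $b$, together with properties of Busemann functions. Convex co-compactness is essential to guarantee both the uniformity of these estimates over $\Omega$ and the finiteness of $m_{BMS}$; for geometrically finite manifolds with parabolic cusps, the argument becomes considerably more subtle. These are precisely the points that Otal and Peign\'e address in \cite{Ot-Pei}.
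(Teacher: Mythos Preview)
The paper does not contain a proof of this theorem: it is stated as a result of Otal and Peign\'e and simply cited from \cite{Ot-Pei}, with the remark that their result in fact holds for general complete manifolds with pinched negative curvature. So there is no ``paper's own proof'' to compare against.

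That said, your sketch is a faithful outline of the Otal--Peign\'e strategy. The upper bound via a Manning-type orbit-counting argument is standard; one small point is that $(T,\delta)$-separation only guarantees separation at \emph{some} time $t\in[0,T]$, not at $t=T$, but convexity of the distance function between geodesics in negative curvature pushes the separation to one of the endpoints, which is what makes the counting work. For the lower bound, your plan (Patterson--Sullivan density on $\Lambda_\Gamma$, Bowen--Margulis--Sullivan measure via Hopf coordinates, shadow lemma, Brin--Katok, variational principle) is exactly the architecture of \cite{Ot-Pei}, and you correctly identify the shadow lemma in variable curvature and finiteness of $m_{BMS}$ as the genuine technical content. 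Since the paper defers entirely to \cite{Ot-Pei} here, your proposal is appropriate as a summary of what that reference does, though it is not something the present paper undertakes.
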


Actually, Otal and Peign\'e proved that topological entropy and critical exponent coincide for general complete manifolds with pinched negative curvature. The definition of the topological entropy has to be slightly adapted when the non-wandering set is non-compact. We will not do it here, since we have seen in Theorem \ref{th:ConfCvxCpct} that the non-wandering set of a conformally compact manifold with negative curvature is always compact. 

Given a homeomorphism $\phi : X\rightarrow X$ on a compact metric space, and a probability measure $\mu$ which is invariant by $\phi$, one can define the \emph{metric entropy of $\mu$ with respect to $\phi$}, denoted by $h_\mu(\phi)$. The standard \emph{variational principle} (cf \cite{Bow75}, Section 2) states that the topological entropy $h_{top}(\phi)$ of $\phi$ is the supremum of the metric entropies for all probability measures invariant by $\phi$. We will also use the notion of \emph{pressure}, which generalizes the topological entropy defined above.

\begin{defn} Let $X$ be a compact metric space, $\phi$ a homeomorphism on $X$ and $f : X\rightarrow \R$ a H\"older map. The \emph{pressure} of $f$ with respect to $\phi$ is
$$\wp(f) = \sup_\mu \{h_\mu(\phi)+\int_{X} fd\mu\},$$
where $\mu$ runs over all probability measures on $X$ invariant under $\phi$, and $h_{\mu}$ is the measure theoretic entropy of $\phi$ with respect to $\mu$. When $(\Phi_t)_{t\in\R}$ is a flow on $X$, the 
pressure of any H\"older map $f:X\rightarrow \R$ is the pressure of $f$ with respect to $\phi = \Phi_1$.
\end{defn} 
In particular, the variational principle mentioned above states that the topological entropy
$$h_{top}(\phi) = \wp(0).$$
A detailed description of the concepts of entropy and pressure, together with its dynamical applications, can be found in \cite{Bow75}.

In the special case of the geodesic flow on a convex co-compact manifold with pinched negative curvature, the non-wandering set is a compact set invariant under the flow, and the entropy is equivalently given by the length spectrum asymptotics. 

\begin{prop}\label{prop:pot} 
Let $(M,g)$ be a convex co-compact manifold with pinched negative curvature, $(\Phi_t)$ its geodesic flow, $\gL$ the set of its closed geodesics and $\Omega(g)$ be its non-wandering set.
\begin{enumerate}
\item The topological entropy $h(g)$ of $\Phi$ restricted to $\Omega(g)$ is given by
$$h(g) = \lim_{T\rightarrow \infty} \frac{\log\#\left\{\gamma\in \gL ; \ell_g(\gamma)\leq T\right\}}{T},$$
where $\gL$ is the set of all closed geodesics of $(M,g)$ and $\ell_g$ is the length induced by $g$ on $\gL$.
\item With this notation, $h(g) = 0$ if and only if $\pi_1(M)$ is Abelian.
\end{enumerate}
\end{prop}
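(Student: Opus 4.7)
I would split on whether $\Gamma = \pi_1(M)$ is abelian; by Propositions~\ref{prop:Abe1} and \ref{prop:AxiomA} the two cases correspond to qualitatively different dynamical regimes that yield (1) and (2) simultaneously.

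If $\Gamma$ is abelian, Proposition~\ref{prop:Abe1} reduces it to the cyclic group generated by a single primitive closed geodesic $\gamma_0$. Every element of $\gL$ is then an iterate $k\gamma_0$ with $k \in \N$, so $\#\{\gamma \in \gL : \ell_g(\gamma) \leq T\} = \lfloor T/\ell_g(\gamma_0)\rfloor$ grows linearly and the right-hand limit in (1) vanishes. In parallel, $\Omega(g) \subset S^gM$ is the union of the two lifts of $\gamma_0$ (one for each orientation), on which any $(T,\delta)$-separated set has cardinality bounded uniformly in $T$, so $h(g) = 0$. This proves (1) here and one direction of (2).

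Suppose now that $\Gamma$ is non-abelian. By Proposition~\ref{prop:AxiomA}, the flow is Axiom~A with $\Omega(g)$ as its unique basic set and is topologically transitive on it. I would then invoke R. Bowen's theorem \cite{Bow73} on topologically transitive basic sets, which identifies the topological entropy with the exponential growth rate of the primitive closed orbit count:
$$h(g) = \lim_{T\to\infty}\frac{\log\#\{\gamma\in\pgL:\ell_g(\gamma)\leq T\}}{T}.$$
To convert this into the stated formula involving $\gL$, use the sandwich $\pi_p(T) \leq \pi(T) \leq \sum_{k \geq 1} \pi_p(T/k)$ where $\pi_p$ and $\pi$ denote the primitive and total counts; the tail $k \geq 2$ is bounded by a constant multiple of $e^{h(g)T/2}$, hence negligible once $h(g) > 0$, and both counts share the exponential rate $h(g)$, giving (1).

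For the remaining direction of (2), I need $h(g) > 0$ in the non-abelian case. This follows either directly from Bowen's theorem (a non-trivial basic set has strictly positive topological entropy), or from the Otal-Peign\'e identity $h(g) = \delta(\Gamma)$ combined with the standard fact that a non-elementary discrete group of isometries of a pinched negatively curved Hadamard manifold contains a rank-two free subgroup via a ping-pong argument, which forces $\delta(\Gamma) > 0$. The main technical subtlety in the whole argument is the passage from Bowen's primitive orbit count to the total count over $\gL$, which is clean only once $h(g) > 0$; this is why it is convenient to establish entropy positivity alongside the counting formula in the non-abelian case.
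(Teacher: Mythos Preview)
Your proposal is correct and close to the paper's argument, but organized differently and with one substantive difference in the positivity step. The paper first proves (1) uniformly by citing Bowen's Theorem~4.11 in \cite{Bow73} (without separating out the abelian case or worrying about primitive versus all orbits), and then derives both directions of (2) from (1). You instead split on abelian versus non-abelian up front and are more explicit about the primitive-to-total conversion, which is a cleaner bookkeeping choice though logically equivalent.

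The genuine difference is in showing $h(g)>0$ when $\Gamma$ is non-abelian. You appeal either to the general fact that a nontrivial basic set has positive entropy, or to the Otal--Peign\'e identity $h(g)=\delta(\Gamma)$ combined with a ping-pong argument. The paper instead gives a short self-contained argument: it exhibits a free subgroup $F_2=\langle\gamma_1,\gamma_2\rangle\subset\Gamma$, uses compactness of the convex core to bound the length of a closed geodesic represented by a word of length $T$ by $(l_{\min}+2d)T$, and concludes that the orbit count dominates $3^T$. Your route is quicker but relies on heavier external statements; the paper's route is elementary and avoids invoking the critical exponent theory or the finer structure of basic sets.
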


\begin{proof}
Let $(M,g)$ satisfy the hypotheses of the proposition. We have seen in the previous section that the geodesic flow on $(M,g)$ is Axiom A, with $\Omega(g)$ its unique basic set. Therefore, the first assertion is a direct application of Theorem 4.11 of \cite{Bow73}. 

If $\pi_1(M)$ is Abelian, then it follows from Proposition \ref{prop:Abe1} that it is generated by the homotopy class of a single closed geodesic of length $\ell$. Therefore, $$\#\left\{\gamma\in \gL ; \ell_g(\gamma)\leq T\right\}\sim \ell T \mbox{ when } T\rightarrow\infty,$$ 
where we have used the notation $f(t) \sim g(t) \quad \textrm{when} \quad t \to \infty \iff \lim_{t \to \infty} \frac{f(t)}{g(t)} = 1$ whenever $f, g: \R \to \R$ with $g(t) > 0$ for all $t$ sufficiently large. By item (1), this implies that $h(g) = 0$. 

If $\pi_1(M)$ is not Abelian, then it follows from Proposition \ref{prop:Abe1} that its limit set is infinite. This implies that $\pi_1(M)$ contains two hyperbolic elements $\gamma_1, \gamma_2$ with distinct fixed points on the visual boundary which are not conjugate within $\pi_1(M)$, and hence they generate a free group $F_2 = \langle \gamma_1, \gamma_2\rangle$. Moreover, since $M$ is convex co-compact, all closed geodesics are contained in a compact set $K$. Therefore, there exists a $l_{min}>0$ such that all closed geodesics have length at least $l_{min}$. Let $d$ be the diameter of $K$ and $\gamma$ be a closed geodesic whose homotopy class $\gamma\in F_2$, and such that $\gamma$ can be expressed by a word of in $\gamma_1, \gamma_2$ of length $T$. By lifting to the universal cover, one can see that the length of $\gamma$ satisfies $\ell(\gamma)\leq (l_{min}+2d)T$. Since each conjugacy class of the fundamental group corresponds exactly to one closed geodesic, this implies
$$\#\left\{\gamma\in \gL ; \ell_g(\gamma)\leq (l_{min}+2d)T\right\} \geq \mathcal{O}(3^T).$$
Therefore, $h(g)>0$ by item (1).
\end{proof}

\section{Zeta functions and asymptotic counting estimates}
S. Smale introduced the Zeta function associated to a general dynamical system \cite{sm} with the idea that Selberg's techniques from analytic number theory \cite{sel} could be applied to dynamical settings. Bowen \cite{Bow73} made significant contributions through his work in symbolic dynamics, which encodes the dynamics of the geodesic flow or a more general Axiom A flow into somewhat easier to manage symbolic dynamics. If the flow is mixing, then the zeta function can be written in terms of symbolic zeta functions. This is one of the main technical tools used by Pollicott \cite{Pol86}, Haydn \cite{Hay90} and several other authors to study dynamical zeta functions and asymptotic counting estimates for the number of closed geodesics. 

The dynamical Zeta function is also a crucial way to link the closed geodesics of hyperbolic manifolds with its spectral theory. Perry \cite{plength} used a spectral interpretation of the poles of the dynamical zeta function to prove the prime orbit theorem on convex co-compact hyperbolic manifolds in dimension $n$. By considering a suitably \emph{weighted dynamical Zeta function}, Guillarmou and Naud \cite{gn} generalized Perry's result by producing \em a larger asymptotic expansion \em for the geodesic length counting function, 
which is explicitly determined by the pure point spectrum of the Laplacian. This result will be discussed in \S 5. 

In this paragraph, we will focus on counting estimates obtained from (weighted) dynamical zeta functions through purely dynamical methods. Most results which we use, especially from the works of Bowen, Parry, Pollicott and Haydn, rely on symbolic dynamics. Nevertheless, we have chosen to avoid a technical presentation of symbolic dynamics in this note. A complete and self-contained description of symbolic dynamics and its applications to zeta functions can be found in \cite{Par-Pol-Aster}.

\medskip

Let $\pgL$ be the set of primitive closed orbits of the geodesic flow and $l_p(\gamma)$ is the primitive period (or length) of $\gamma \in \pgL$. The weighted Zeta function of \cite{gn} is, 
$$ \tilde{Z} (s) = \exp \left( \sum_{\gamma \in \pgL} \sum_{k \in \N} \frac{e^{-k s l_p(\gamma)}}{k \sqrt{| \det(I - \cP^k _{\gamma}) |}} \right),$$
where $\cP^k _{\gamma}$ is the $k$-times Poincar\'e map of the geodesic flow around the primitive closed orbit $\gamma$. The behavior of this zeta function is governed by the \em Sinai-Bowen-Ruelle potential. \em 

\begin{defn}
For any $\xi\in S^1M$, we denote by $E^u(\xi)$ its unstable manifold. The \em Sinai-Bowen-Ruelle potential \em at $\xi$ is defined by 
$$W_{SBR}(\xi) : = \frac{d}{dt} \big|_{t=0} \log \det d\Phi_t |_{E^u _{\xi}},$$
where the determinant of $d\Phi_t|_{E^u _{\xi}} : E^u _{\xi}\rightarrow E^u_{\Phi_t\xi}$ is computed with respect to orthonormal bases of $E^u _{\xi}$ and $E^u_{\Phi_t\xi}$ for the Riemannian metric.
\end{defn}
This potential, introduced by Bowen and Ruelle (see \cite{BoRu75} section 4.), gives the instantaneous rate of expansion at $\xi$. Its regularity 
depends on the regularity of the unstable foliation, which is in general not smooth. However, 
when the sectional curvature is negatively pinched, then the unstable foliation is H\"older. 

\begin{prop}\label{prop:WSBR}
Let $M$ be a smooth complete $(n+1)$-manifold with pinched negative curvature: $-b^2\leq K_g\leq -a^2<0$. Then the Sinai-Bowen-Ruelle potential $W_{SBR}$ is a H\"older map on the unit tangent bundle $S^1M$. Its H\"older exponent can be taken to be
$\alpha =\inf\{\frac{2a}{b},1\}$. Moreover, the pressure of $W_{SBR}$ satisfies
$$h(g)-\frac{nb}{2}\leq \wp\left(-\frac{W_{SBR}}{2}\right)\leq h(g)-\frac{na}{2},$$
where $h(g)$ is the topological entropy of the geodesic flow. 
\end{prop}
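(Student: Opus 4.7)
The plan is to treat the two claims of the proposition separately: first the Hölder regularity of $W_{SBR}$, then the pressure bounds.

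For the Hölder regularity, the strategy is to reduce the regularity of $W_{SBR}$ to that of the unstable distribution $\xi \mapsto E^u_\xi$. Using a locally Hölder-continuous orthonormal frame $\{e_i(\xi)\}_{i=1}^{n}$ of $E^u_\xi$, one can express $\log\det(d\Phi_t|_{E^u_\xi})$ as the log-determinant of the Gram matrix of $\{d\Phi_t\,e_i(\xi)\}$ in an orthonormal frame of $E^u_{\Phi_t\xi}$; differentiating at $t=0$ exhibits $W_{SBR}$ as a smooth function of $\xi$ and of the frame, composed with $\xi\mapsto E^u_\xi$. Since $\Phi_t$ and the Riemannian metric are smooth, the only non-smooth ingredient is the unstable distribution. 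The remaining input is the classical Hölder regularity of $E^u$ for Anosov flows under a bunching condition: for geodesic flows with $-b^2 \leq K_g \leq -a^2$, Jacobi field comparison gives expansion/contraction rates in $[a,b]$ (Theorem 3.9.1 of Klingenberg, already cited), and the sharp bunching estimate of Brin--Karcher (or Hasselblatt) yields the Hölder exponent $\alpha = \min\{1, 2a/b\}$. I would quote this result rather than rederive it.

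For the pressure bounds, I would use a pointwise estimate on $W_{SBR}$ together with the variational principle. The key observation is that $W_{SBR}(\xi)$ is the trace of the infinitesimal expansion operator on the $n$-dimensional unstable space $E^u_\xi$. Jacobi field comparison in pinched negative curvature gives pointwise bounds
$$na \leq W_{SBR}(\xi) \leq nb \qquad \text{for every } \xi \in S^1M,$$
since each unstable Jacobi field has logarithmic growth rate in $[a,b]$ by comparison with the constant curvature cases. Hence for every $\Phi_t$-invariant probability measure $\mu$ supported on $\Omega(g)$,
$$h_\mu(\Phi_1) - \tfrac{nb}{2} \;\leq\; h_\mu(\Phi_1) + \int -\tfrac{1}{2}W_{SBR}\, d\mu \;\leq\; h_\mu(\Phi_1) - \tfrac{na}{2}.$$
Taking the supremum over such $\mu$ and invoking the variational principle $\sup_\mu h_\mu(\Phi_1) = \wp(0) = h(g)$ on the compact $\Phi_t$-invariant set $\Omega(g)$ yields the stated bounds on $\wp(-W_{SBR}/2)$.

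The main obstacle is establishing the sharp Hölder exponent $\min\{1, 2a/b\}$ for the unstable distribution $E^u$ in variable negative curvature; the bunching analysis for Anosov flows is delicate but well-documented (Brin--Karcher, Hasselblatt). Once the Hölder regularity of $E^u$ with this exponent is in hand, both the regularity statement for $W_{SBR}$ and the pressure inequalities reduce to essentially routine arguments, relying only on the variational principle and the standard Jacobi field comparison theorems.
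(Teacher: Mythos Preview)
Your proposal is correct and follows essentially the same approach as the paper: both arguments reduce the H\"older regularity of $W_{SBR}$ to that of the unstable distribution (the paper citing \cite{HirPug75} for the $\tfrac14$-pinched case and \cite{Has94}/Klingenberg Theorem 3.2.17 otherwise, you citing Brin--Karcher/Hasselblatt), and both derive the pressure bounds from the pointwise estimate $na \leq W_{SBR} \leq nb$ via Jacobi field comparison (Klingenberg Theorem 3.9.1) combined with the variational principle. Your frame argument making explicit why the regularity of $W_{SBR}$ equals that of $E^u$ is a useful elaboration of what the paper states in one line.
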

\begin{proof}
Let $M$ be a complete $(n+1)$ dimensional manifold with pinched negative curvature: $-b^2\leq K_g\leq -a^2<0$. Since the flow $\Phi_t$ is smooth, the H\"older exponent of $W_{SBR}$ is the same as the H\"older exponent of the unstable foliation. It was shown in \cite{HirPug75} that when the curvature is $\frac{1}{4}-$pinched, i.e. $\frac{a}{b}>\frac{1}{2}$, then the unstable foliation is at least $\mathcal C^1$. Moreover, as observed in \cite{Has94}, it follows from the proof of Theorem 3.2.17 of \cite{Kli82} that when the curvature is not $\frac{1}{4}-$pinched, the unstable foliation is $\alpha-$H\"older where $\alpha$ can be taken to be $\frac{2a}{b}$, which concludes the proof of our first statement.

Since $W_{SBR}$ is H\"older, we can compute its pressure. It follows from Theorem 3.9.1 of \cite{Kli82} that
$$na\leq \frac{d}{dt} \big|_{t=0} \log \det d\Phi_t |_{E^u _{\xi}}\leq nb.$$
Therefore, for every probability measure $\mu$ on $S^1M$ invariant by the geodesic flow $\Phi$, we have
$$-\int_M \frac{nb }{2}d\mu +h_\mu(\Phi)\leq -\int_M \frac{W_{SBR}}{2} d\mu +h_\mu(\Phi)\leq -\int_M \frac{na }{2}d\mu +h_\mu(\Phi),$$
where $h_\mu(\Phi)$ is the entropy of the measure $\mu$ with respect to the geodesic flow. This implies immediately that
$$h(g)-\frac{nb}{2}\leq \wp\left(-\frac{W_{SBR}}{2}\right)\leq h(g)-\frac{na}{2},$$
where $h(g)$ is the topological entropy of the geodesic flow.
\end{proof}

We will call \emph{weight} a map $U : \gL\rightarrow \R$. A \emph{weighted dynamical Zeta function} (for the geodesic flow on $\Omega$) can be defined at least formally as
$$Z_U(s) = \exp \left( \sum_{\gamma \in \pgL} \sum_{k \in \N} \frac{1}{k}e^{-k s l_p(\gamma) + U(k\gamma)} \right)$$
for some weight $U : \gL\rightarrow \R$. For a special class of weights, we will show that such a weighted Zeta function converges to an analytic function on a half plane, and admits a meromorphic extension to a strictly larger half plane.

\begin{defn}
Let $U : \gL\rightarrow \R$ be a weight. We say that $U$ \emph{derives from a H\"older potential} if there is a H\"older map $W : \Omega\rightarrow \R$ such that for every geodesic $\gamma\in\gL$, 
$$U(\gamma) = \int_\gamma W(\xi) d\gamma(\xi),$$
where $d\gamma$ is the induced Riemannian measure on $\gamma\subset \Omega$.
\end{defn}

\subsection{Proof of Theorem \ref{theo:WZeta}}
By Theorem \ref{th:ConfCvxCpct}, $(M,g)$ is convex co-compact with pinched negative curvature. 
By the results of the previous section, the geodesic flow on $S^gM$ is Axiom A and its non-wandering set $\Omega$ is a hyperbolic basic set. Moreover, since the length spectrum is non-arithmetic, it follows from Theorem \ref{theo:Dalbo} that the flow is topologically weak-mixing on $\Omega(g)$. The extension property for the zeta function then follows from Theorem 11 of Haydn \cite{Hay90}. It follows from Section 6 of \cite{Pol86} that the extension to the half plane $\mathfrak{R}(s)> \wp(W)-\frac{\lambda\alpha}{2}$ is in general optimal (this was observed by Haydn).
\qed

When the weight function is trivial, we recover the \em Selberg zeta function \em 
$$\label{dz} Z(s) = \exp \left( \sum_{\gamma \in \pgL} \sum_{k \in \N} \frac{e^{-k s l_p(\gamma)}}{k} \right),$$
which was used to prove the eponymous trace formula for weakly symmetric spaces \cite{sel}. As long as it converges, this dynamical Zeta can also be represented as the following Euler product:
$$Z(s) = \prod_{\gamma \in \pgL} \left( 1 - e^{-sl_p (\gamma)} \right)^{-1}.$$

\begin{cor}[Extension of the dynamical zeta function]\label{cor:DZeta}
Let $(M, g)$ be a conformally compact manifold with negative sectional curvatures and with non Abelian fundamental group and non-arithmetic length spectrum. Then there exist positive constants $a$ and $b$ such that the sectional curvatures satisfy
$$-b^2 \leq K_g \leq -a^2 < 0,$$ 
and the dynamical Zeta function 
$$ Z(s) = \exp \left( \sum_{\gamma \in \pgL} \sum_{k \in \N} \frac{e^{-k s l_p(\gamma)}}{k} \right)$$ 
converges absolutely for $\mathfrak{R}(s) > h$.  It admits a meromorphic extension to $\mathfrak{R}(s) > h-\frac{\lambda}{2}$, where $\lambda\in[a,b]$ is the expansion factor of the geodesic flow on the non-wandering set $\Omega(g)$.  Moreover, with the exception of a simple pole at $h$, this extension is analytic and non-vanishing in an open neighborhood of $\{ \mathfrak{R}(s) \geq h \}$, where $h$ is the topological entropy of the geodesic flow. 
\end{cor}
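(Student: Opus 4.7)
The plan is to derive the corollary as the special case of Theorem \ref{theo:WZeta} in which the weight is trivial, so the main task is to check that all the hypotheses of that theorem are met and that the constants specialize as claimed.

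First I would invoke Theorem \ref{th:ConfCvxCpct} to pass from conformally compact to convex co-compact, so that Theorem \ref{theo:WZeta} is applicable. The existence of constants $a, b>0$ with $-b^2\le K_g\le -a^2<0$ follows from Proposition \ref{prop:SecInfty}: the sectional curvatures converge pointwise on the (compact) boundary $\partial M$ to $-|dx|_{\bar g}^2$, which is continuous and strictly negative there, so by compactness of $\bar M$ there are uniform positive lower and upper bounds $a^2$ and $b^2$ for $-K_g$ outside any small enough collar, and the interior of $M$ (being relatively compact once the collar is removed) contributes only finitely many additional values. This yields the pinching constants claimed in the corollary.

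Next I would apply Theorem \ref{theo:WZeta} with the zero weight. Take $W\equiv 0$ on $\Omega(g)$; then $W$ is trivially H\"older with any exponent $\alpha\in(0,1]$, and one may take $\alpha=1$. The induced weight is $U(\gamma)=\int_\gamma 0\, d\gamma=0$, so the weighted zeta function $Z_U(s)$ reduces precisely to
\[
Z(s)=\exp\left(\sum_{\gamma\in\pgL}\sum_{k\in\N}\frac{e^{-ksl_p(\gamma)}}{k}\right).
\]
The pressure of the zero potential is $\wp(0)=h$ by the variational principle (recorded immediately after the definition of pressure in \S 3). Substituting $\wp(W)=h$ and $\alpha=1$ into the conclusions of Theorem \ref{theo:WZeta} gives absolute convergence for $\Re(s)>h$, meromorphic extension to $\Re(s)>h-\lambda/2$, and analyticity and non-vanishing on a neighborhood of $\{\Re(s)\ge h\}$ except for the simple pole at $s=h$.

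The only conceptual subtlety — and really the only place where some care is needed — is to verify that the hypothesis ``non-Abelian fundamental group'' in the corollary actually guarantees that $\Omega(g)$ is a basic hyperbolic set on which the flow is topologically transitive, so that Theorem \ref{theo:WZeta} applies in the first place; this is handled by Proposition \ref{prop:AxiomA}, and together with Dalb'o's Theorem \ref{theo:Dalbo} the non-arithmetic length spectrum hypothesis upgrades transitivity to the topological weak-mixing needed in Haydn's extension result used in the proof of Theorem \ref{theo:WZeta}. With these standing hypotheses checked, the Euler product representation
\[
Z(s)=\prod_{\gamma\in\pgL}\left(1-e^{-sl_p(\gamma)}\right)^{-1}
\]
on $\Re(s)>h$ is immediate from expanding the logarithm, and the corollary follows.
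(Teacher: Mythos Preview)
Your proposal is correct and matches the paper's approach: the corollary is presented as the immediate specialization of Theorem~\ref{theo:WZeta} to the trivial potential $W\equiv 0$, for which $\wp(0)=h$ and one may take $\alpha=1$, and the paper offers no proof beyond the sentence ``When the weight function is trivial, we recover the Selberg zeta function'' preceding the statement. Your additional remarks on the pinching constants and the Euler product are consistent with the surrounding text and add no new ingredients.
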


\begin{remark}\label{rk:Opti1}
Is this extension for \emph{un-weighted} dynamical zeta functions optimal? It is not clear whether the examples constructed by Pollicott in \cite{Pol86} can be adapted to get convex co-compact manifolds whose standard dynamical zeta function has an essential singularity. Nevertheless, C. Guillarmou showed in \cite{Gui05} that for generic asymptotically hyperbolic $n$-manifolds, the resolvent of the Laplacian has an essential singularity in $(n/2 - \mathbb N)$. This strongly suggests that the meromorphic extension to $\mathfrak{R}(s) > h-\frac{\lambda}{2}$ should be optimal for general conformally compact manifolds.
\end{remark}

\begin{remark}
 When the flow is \emph{not topologically weak mixing}, it is possible to establish a meromorphic extension of the dynamical zeta function: this follows from the work of Parry and Pollicott \cite{pp83}. It allows the authors to prove a suitably written Prime Orbit Theorem, cf Theorem 2 of \cite{pp83}. We will not deal with the (hypothetical) case of non-mixing geodesic flows in this paper since it is expected that the geodesic flow on any convex co-compact manifold with pinched negative curvature is mixing. 
 \end{remark}

\subsection{Proof of Theorem \ref{theo:WPOT}} 
This theorem will be a direct consequence of Theorem \ref{theo:WZeta} and the following lemma, which is based on the Wiener-Ikehara \cite{w67} proof of the Prime Number Theorem and arguments of \cite{pp83}.

\begin{lemma}
Let $(M,g)$ be a convex co-compact manifold whose geodesic flow is topologically mixing, and $\gL$ be the set of its closed geodesics. Let $w : \gL\rightarrow [0,\infty)$ be a positive map such that the dynamical Zeta function
$$Z(s) = \exp \sum_{\gamma \in \cL_p} \sum_{\N} \frac{e^{-ksl(\gamma)} w(\gamma)^k}{k}$$
converges absolutely for $\mathfrak{R}(s) > \eta$ for some $\eta > 0$ and admits a non-vanishing analytic extension to an open neighborhood of $\mathfrak{R}(s) = \eta$ with the exception of a simple pole at $s = \eta$. Then
$$ \sum_{\gamma \in {\cL}_{T}} w(\gamma)\sim \frac{e^{\eta T}} {\eta T} \mbox{ when }T\rightarrow \infty.$$           
\end{lemma}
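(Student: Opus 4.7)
My plan is the classical Wiener--Ikehara approach used in \cite{pp83} for the unweighted case, specialized to the present weighted setting.

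First, I would take the logarithmic derivative of $Z$:
$$-\frac{Z'(s)}{Z(s)} \;=\; \sum_{\gamma \in \cL_p}\sum_{k\in\N} l(\gamma)\,w(\gamma)^k\,e^{-ks l(\gamma)} \;=\; \int_0^\infty e^{-sT}\,d\psi(T),$$
where
$$\psi(T) \;:=\; \sum_{\substack{\gamma\in\cL_p,\,k\in\N\\ kl(\gamma)\leq T}} l(\gamma)\,w(\gamma)^k$$
is a non-decreasing step function whose Laplace--Stieltjes transform is $-Z'/Z$. By hypothesis, near $s=\eta$ we have $Z(s) = C/(s-\eta) + (\text{analytic})$ with $C\neq 0$, so $-Z'(s)/Z(s)$ has a simple pole with residue $1$ at $\eta$ and extends analytically to a neighborhood of $\{\mathfrak{R} s \geq \eta\}\setminus\{\eta\}$. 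Applying the Wiener--Ikehara Tauberian theorem (e.g.\ Korevaar, \emph{Tauberian Theory}, Ch.~III) to the non-decreasing $\psi$ gives
$$\psi(T) \;\sim\; \frac{e^{\eta T}}{\eta},\qquad T\to\infty.$$

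Next, I would pass from $\psi$ to the asymptotic for $\pi_w(T) := \sum_{\gamma\in\cL_T} w(\gamma) = \sum_{kl(\gamma)\leq T} w(\gamma)^k$. Decompose $\psi = \Theta + \psi_{\geq 2}$, where $\Theta(T) := \sum_{l(\gamma)\leq T,\,\gamma\in\cL_p} l(\gamma) w(\gamma)$ is the primitive ($k=1$) contribution. I would show $\psi_{\geq 2}(T) = o(e^{\eta T})$ by exploiting that
$$\sum_{\gamma\in\cL_p,\,k\geq 2}\frac{w(\gamma)^k}{k}e^{-ksl(\gamma)} \;=\; \log Z(s) \;-\; \sum_{\gamma\in\cL_p}w(\gamma)e^{-sl(\gamma)}$$
is a non-negative series whose singularity at $s=\eta$ must be strictly weaker than the $-\log(s-\eta)$ singularity of $\log Z$ (otherwise the primitive part alone could not produce the simple pole of $Z$); a monotonicity/comparison argument then yields the required tail bound, giving $\Theta(T)\sim e^{\eta T}/\eta$. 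Abel summation
$$\sum_{l(\gamma)\leq T,\,\gamma\in\cL_p}w(\gamma)\;=\;\frac{\Theta(T)}{T}+\int_{T_0}^T\frac{\Theta(t)}{t^2}\,dt,$$
together with the standard asymptotic $\int^T e^{\eta t}/t^2\,dt\sim e^{\eta T}/(\eta T^2)$ (integration by parts via the exponential integral), yields the primitive count asymptotic $e^{\eta T}/(\eta T)$. The same comparison argument shows the non-primitive contribution to $\pi_w$ is $o(e^{\eta T}/T)$, giving the claim.

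The main technical obstacle is the non-primitive tail estimate: a priori, $w(\gamma)^k$ may grow as fast as $e^{k\eta l(\gamma)}$ (the largest rate compatible with absolute convergence of $Z$ on $\{\mathfrak{R} s>\eta\}$), so purely pointwise bounds on $\sum_{l(\gamma)\leq T/k} w(\gamma)^k$ are too weak. It is precisely the sharpness of the singularity hypothesis --- that $Z$ has \emph{only} a simple pole at $\eta$, not an essential or higher-order singularity --- that transfers through $\log Z$ to force the non-primitive generating functions to remain bounded as $s\to\eta^+$ and thereby controls the non-primitive counts.
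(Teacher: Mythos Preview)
Your approach is the same as the paper's: apply the Wiener--Ikehara Tauberian theorem to $-Z'/Z$ following \cite{pp83}. The paper's proof is in fact even terser than yours: it normalizes by setting $N(\gamma)=e^{\eta l(\gamma)}$ so that $\zeta(s):=Z(\eta s)$ has its simple pole at $s=1$, and then simply writes ``repeating the arguments in p.~588--589 of \cite{pp83}'' to conclude $\sum_{N(\gamma)\le x}w(\gamma)\sim x/\log x$.

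There is, however, a gap in your treatment of the non-primitive tail. Your claimed mechanism---that the $k\ge 2$ part of $\log Z$ must have a strictly weaker singularity than $-\log(s-\eta)$ because ``otherwise the primitive part alone could not produce the simple pole''---does not follow. Both the $k=1$ and $k\ge 2$ parts of $\log Z$ are non-negative series, and the simple-pole hypothesis only constrains their \emph{sum} to behave like $-\log(s-\eta)$; nothing prevents them from sharing the logarithmic divergence (e.g.\ each contributing $-\tfrac12\log(s-\eta)$). In the unweighted case treated on pp.~588--589 of \cite{pp83} this is a non-issue because $w(\gamma)^k\equiv 1$ and one bounds $\sum_{k\ge 2}\theta(T/k)$ crudely by $O(Te^{\eta T/2})$; in the weighted setting that bound is unavailable since, as you correctly observe, $w(\gamma)^k$ may be as large as $e^{k\eta l(\gamma)}$. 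The paper's proof does not spell out how the \cite{pp83} argument adapts to general weights on this point either---it simply cites the reference---so you are not missing an idea that the paper supplies, but the justification you propose for $\psi_{\ge 2}(T)=o(e^{\eta T})$ does not stand on its own.
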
 

\begin{proof} 
Let 
$$N(\gamma) = e^{\eta l(\gamma)}, \quad \Lambda(\gamma) = \log N(\gamma),$$
and 
$$\zeta(s) := \exp \left( \sum_{\cL_p} \sum_{\N} \frac{1}{k} N(\gamma)^{-ks} w(\gamma)^k \right).$$
By assumption, $\zeta$ has a simple pole at $s=1$ and admits a non-vanishing analytic extension (with the exception of the pole at $1$) to an open neighborhood of $\mathfrak{R}(s) =1$. Repeating the arguments in P. 588--589 of \cite{pp83}, it follows that when the flow is topologically mixing, 
$$\sum_{N(\gamma) \leq x} w(\gamma) \sim \frac{x}{\log x}.$$
\end{proof}

\begin{remark}
We note that the counting estimate for the length spectrum $\cL$ is equivalent to that for $\cL_p$. This follows from the calculation
$$\frac{e^{hx}}{hx} + o\left(\frac{e^{hx}}{hx}\right)\leq \# \{ \gamma \in \cL | l(\gamma) \leq x \} \leq \sum_{k=1} ^{[x]} \# \left\{ \gamma \in \cL_p | l(\gamma) \leq \frac{x}{k} \right\} \leq \frac{e^{hx}}{hx} + \frac{x e^{hx/2}}{hx/2}+ o\left(\frac{e^{hx/2}}{hx/2}\right).$$

An analogous calculation gives the equivalence of the weighted prime orbit theorem for $\cL$ and $\cL_p$. 
\end{remark}

An immediate corollary of the weighted prime orbit theorem (which follows by taking the trivial weight function $1$) gives the asymptotic growth of the number of closed geodesics. 
\begin{theorem}[Prime Orbit Theorem]\label{th:pot} 
Let $(M, g)$ be a conformally compact manifold with negative sectional curvature, with non Abelian fundamental group and non-arithmetic length spectrum. Then the length spectrum counting function satisfies
$$ \# \{ \gamma \in \gL : l(\gamma) \leq T \} \sim \frac{e^{hT}}{hT} \mbox{ when }T\rightarrow \infty.$$
\end{theorem}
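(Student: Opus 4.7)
The plan is to obtain this statement as an essentially immediate consequence of Theorem \ref{theo:WPOT} by specializing to the trivial weight. Concretely, I would take $W \equiv 0$ on the non-wandering set $\Omega(g)$, which is trivially H\"older, and observe that the induced weight on $\gL$ is then $U \equiv 1$ (equivalently, $\log U \equiv 0$ derives from the zero potential, and $U(k\gamma) = 0$ integrated gives the constant $1$ after exponentiation under the convention of Theorem \ref{theo:WPOT}). The left-hand side of the asymptotic formula in Theorem \ref{theo:WPOT} then collapses to
\[
\sum_{\gamma \in \cL_T} 1 = \#\{\gamma \in \gL : l(\gamma)\leq T\},
\]
which is the quantity we need to count.

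Next I would identify the pressure $\wp(0)$ that appears on the right-hand side. By the variational principle recalled in \S3,
\[
\wp(0) = \sup_{\mu} h_\mu(\Phi) = h_{\mathrm{top}}(\Phi|_{\Omega(g)}) = h(g),
\]
so the exponential rate in Theorem \ref{theo:WPOT} becomes exactly the topological entropy $h$. To invoke Theorem \ref{theo:WPOT} we also need $\wp(0) = h > 0$; this is granted by Proposition \ref{prop:pot}(2) combined with the non-Abelian fundamental group hypothesis. All other hypotheses of Theorem \ref{theo:WPOT} (conformally compact, negative sectional curvatures, non-arithmetic length spectrum) are inherited directly from the statement here. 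Substituting $U \equiv 1$ and $\wp(W) = h$ into the conclusion of Theorem \ref{theo:WPOT} yields the desired asymptotic
\[
\#\{\gamma \in \gL : l(\gamma)\leq T\} \sim \frac{e^{hT}}{hT}.
\]

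The one bookkeeping point I would make explicit is that Theorem \ref{theo:WPOT} is phrased in terms of a sum over \emph{all} closed geodesics $\gL$, whereas the underlying weighted Zeta function $Z_U$ and its extension (Theorem \ref{theo:WZeta}) were formulated in terms of \emph{primitive} orbits. The remark immediately preceding the statement already handles this: the calculation there shows that the counting functions for $\gL$ and $\pgL$ differ by a term of order $e^{hT/2}/T$, which is absorbed into the error of the leading asymptotic $e^{hT}/(hT)$. Consequently, no additional analytic input is required, and the Prime Orbit Theorem follows.

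If anything merits being called the ``hard part,'' it is verifying cleanly that the trivial weight falls within the scope of the hypothesis of Theorem \ref{theo:WPOT} (the formulation ``$\log U$ derives from a H\"older potential $W$'' should be read as allowing $W \equiv 0$, so $U \equiv 1$), and confirming the identification $\wp(0) = h$; but both are routine given the machinery already assembled. Everything quantitative, including the topological mixing of the flow on $\Omega(g)$ needed behind the scenes for Theorem \ref{theo:WPOT}, has already been established via Proposition \ref{prop:AxiomA} and Theorem \ref{theo:Dalbo} under the non-arithmetic length spectrum assumption.
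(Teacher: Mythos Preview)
Your proposal is correct and matches the paper's own approach exactly: the paper presents this theorem as an immediate corollary of Theorem \ref{theo:WPOT} obtained by taking the trivial weight $U\equiv 1$ (equivalently $W\equiv 0$), with $\wp(0)=h$ by the variational principle. The additional checks you spell out (that $h>0$ via Proposition \ref{prop:pot}(2), and the $\gL$ versus $\pgL$ bookkeeping via the preceding remark) are precisely the details the paper leaves implicit.
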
 

\begin{remark} The Prime Orbit Theorem also follows from the work of Roblin \cite{Rob03}, which applies in a much more general setting.
\end{remark} 

We now want to apply these results to the weighted Zeta function from \cite{gn} 
$$ \tilde{Z} (s) = \exp \left( \sum_{\gamma \in \pgL} \sum_{k \in \N} \frac{e^{-k s l_p(\gamma)}}{k \sqrt{| \det(I - \cP^k _{\gamma}) |}} \right),$$
where $\cP^k _{\gamma}$ is the $k$-times Poincar\'e map of the geodesic flow around the primitive closed orbit $\gamma$, described at the beginning of this section. However, the weight $$\gamma\mapsto \frac{1}{\sqrt{| \det(I - \cP^k _{\gamma}) |}}$$
does \emph{not} derive from a H\"older potential. To be able to deal with such weighted Zeta functions, we prove now that when two weights are asymptotically exponentially close, then their associated weighted Zeta functions have similar extension properties.

\begin{theorem} \label{theo:asyz}
Let $(M,g)$ be a convex co-compact manifold with non-arithmetic length spectrum, and $\gL$ be the set of its closed geodesics. Let $w,v : \gL \rightarrow [0,\infty)$ be two weights such that $w(k\gamma) = w(\gamma)^k$ and $v(k\gamma) = v(\gamma)^k$. We assume that there exist constants $C$, $\epsilon > 0$ with
$$v(\gamma) = w(\gamma)(1 + r(\gamma)), \quad |r(\gamma)| \leq C e^{-\epsilon k l(\gamma)}, \quad \forall \quad \gamma \in \cL_p \textrm{ and all } k \in \N.$$
Let 
$$Z(s) = \exp\left( \sum_{\gamma \in \cL_p} \sum_{k \in N} \frac{e^{-ksl(\gamma)} w(\gamma)^k}{k}\right) \mbox{ and } Z^*(s) = \exp \left( \sum_{\gamma \in \cL_p} \sum_{k \in \N} \frac{e^{-ksl(\gamma)} v(\gamma)^k}{k}\right)$$
be the weighted dynamical Zeta functions associated to $w$ and $v$. 
Assume that $Z$ converges absolutely for $\mathfrak{R}(s) > \eta$ for some $\eta \in \R$ and admits a non-vanishing analytic extension to an open neighborhood of $\mathfrak{R}(s) = \eta$ with the exception of a simple pole at $s = \eta$. 
\begin{enumerate}
\item $Z^*$ converges absolutely for $\mathfrak{R}(s) > \eta$ and admits a non-vanishing analytic extension to an open neighborhood of $\mathfrak{R}(s) = \eta$ with the exception of a simple pole at $s = \eta$.
\item If $\eta > 0$, then $w$ and $v$ satisfy the following counting estimates
$$ \sum_{\gamma \in \cL_{T}} w(\gamma)\sim \frac{e^{\eta T}} {\eta T} \mbox{ and } \sum_{\gamma \in \cL_{T}} v(\gamma)\sim \frac{e^{\eta T}} {\eta T} \mbox{ when }T\rightarrow \infty.$$ 
\end{enumerate}
\end{theorem}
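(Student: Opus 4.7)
The plan is to write $Z^*(s)=R(s)\,Z(s)$ with $R(s):=Z^*(s)/Z(s)$, show that $R$ extends analytically and non-vanishingly to an open neighborhood of $\{\mathfrak R(s)=\eta\}$, and thereby transfer the meromorphic structure of $Z$ to $Z^*$. A preliminary comparison using $v=w(1+r)$ with $|r(\gamma)|\leq 1/2$ for $l(\gamma)$ large shows $Z^*$ converges absolutely on $\mathfrak R(s)>\eta$, so $R$ is well-defined there.

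Taking logarithms of the defining series yields
\[
\log R(s)=\sum_{\gamma\in\cL_p}\sum_{k\geq 1}\frac{e^{-ksl(\gamma)}}{k}\bigl[v(\gamma)^k-w(\gamma)^k\bigr].
\]
The key estimate is the elementary bound
\[
|v(\gamma)^k-w(\gamma)^k|\;\leq\;k\,|r(\gamma)|\,w(\gamma)^k(1+|r(\gamma)|)^{k-1},
\]
obtained from $v^k-w^k=(v-w)\sum_{j=0}^{k-1}v^j w^{k-1-j}$. Summing the resulting geometric series in $k$, for any $\gamma$ with $l(\gamma)$ large enough that $(1+|r(\gamma)|)w(\gamma)e^{-\mathfrak R(s)l(\gamma)}<1$, one obtains
\[
\sum_{k\geq 1}\frac{e^{-ksl(\gamma)}}{k}\bigl|v(\gamma)^k-w(\gamma)^k\bigr|\;\leq\;C\,w(\gamma)\,e^{-(\mathfrak R(s)+\epsilon)l(\gamma)},
\]
using $|r(\gamma)|\leq Ce^{-\epsilon l(\gamma)}$. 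The series $\sum_\gamma w(\gamma)e^{-s'l(\gamma)}$ is the $k=1$ subsum of $\log Z$ and therefore converges absolutely for $\mathfrak R(s')>\eta$, so the tail of $\log R(s)$ converges absolutely on $\mathfrak R(s)>\eta-\epsilon$.

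The finitely many excluded $\gamma$ with small $l(\gamma)$ contribute rational functions in $e^{-sl(\gamma)}$, hence a meromorphic function on $\mathbb C$ with finitely many singularities at the real points $s=\log v(\gamma)/l(\gamma)$ and $s=\log w(\gamma)/l(\gamma)$. For $\epsilon'\in(0,\epsilon)$ chosen small enough that no such real point lies in the interval $[\eta-\epsilon',\eta+\epsilon']$, the function $R$ is analytic and non-vanishing on a strip neighborhood of $\{\mathfrak R(s)=\eta\}$; combined with the hypothesis on $Z$ and the factorization $Z^*=R\,Z$, this yields assertion (1). For assertion (2), since both $(Z,w)$ and $(Z^*,v)$ now satisfy the hypotheses of the Wiener--Ikehara-type lemma proved just before Theorem \ref{th:pot}, and $\eta>0$, the lemma delivers the asymptotics $\sum_{\gamma\in\cL_T}w(\gamma)\sim e^{\eta T}/(\eta T)$ and $\sum_{\gamma\in\cL_T}v(\gamma)\sim e^{\eta T}/(\eta T)$.

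The main obstacle is the bookkeeping of the finitely many exceptional factors: one must verify that none of the real points $\log v(\gamma)/l(\gamma)$ or $\log w(\gamma)/l(\gamma)$ for small-$l(\gamma)$ $\gamma$ coincides with $\eta$. A coincidence $w(\gamma_0)e^{-\eta l(\gamma_0)}=1$ would insert an extra pole of $Z$ at $\eta$ via a single factor and should be absorbed by the hypothesized simple pole of $Z$; a coincidence $v(\gamma_0)e^{-\eta l(\gamma_0)}=1$ without the corresponding equality for $w$ would produce a spurious pole of $R$ at $\eta$ and require a separate argument. Since the set of candidate singular points is discrete and finite, generically the obstacle does not arise and one can shrink $\epsilon'$; the non-generic case is handled by explicit cancellation between the exceptional factor and the pole of $Z$.
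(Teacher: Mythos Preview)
Your overall strategy --- factor $Z^*=R\cdot Z$ and show the correction $R$ is analytic and non-vanishing on a strip containing $\{\mathfrak R(s)=\eta\}$, then invoke the Wiener--Ikehara lemma for part (2) --- is exactly the paper's. The difference is in the execution, and the paper's version is simpler in a way that dissolves your ``main obstacle'' paragraph entirely.

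The paper reads the hypothesis as defining $r$ on all of $\cL$, not just on $\cL_p$: for every closed geodesic $\gamma$ (of length $l(\gamma)=k\,l(\gamma_p)$), one has $v(\gamma)=w(\gamma)(1+r(\gamma))$ with $|r(\gamma)|\leq Ce^{-\epsilon l(\gamma)}$. Then
\[
\log Z^*(s)-\log Z(s)\;=\;\sum_{\gamma\in\cL}\frac{e^{-sl(\gamma)}w(\gamma)\,r(\gamma)}{k(\gamma)}\;=:\;\log Z_\epsilon(s),
\]
and a single termwise bound gives
\[
|\log Z_\epsilon(s)|\;\leq\;C\sum_{\gamma\in\cL}\frac{e^{-(\mathfrak R(s)+\epsilon)l(\gamma)}w(\gamma)}{k(\gamma)}\;=\;C\,\log Z(\mathfrak R(s)+\epsilon),
\]
which converges absolutely for $\mathfrak R(s)>\eta-\epsilon$. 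No geometric series in $k$ is ever summed in closed form, so no condition of the shape $(1+|r(\gamma)|)w(\gamma)e^{-\sigma l(\gamma)}<1$ is required, and consequently no finite set of short geodesics has to be split off and analyzed separately.

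Your route via the telescoping bound $|v^k-w^k|\leq k|r|\,w^k(1+|r|)^{k-1}$ is valid, but it is precisely what forces the convergence condition on the inner $k$-sum and hence the bookkeeping about exceptional factors. That final paragraph is currently a genuine gap in your write-up (``generically\dots\ the non-generic case is handled by explicit cancellation'' is not an argument), but it is a self-inflicted gap: if you compare over $\cL$ rather than over $\cL_p\times\N$, the issue never arises.
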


\begin{proof}
By hypothesis, we may write 
$$Z^*(s) = \exp \sum_{\gamma \in \cL} \frac{e^{-sl(\gamma)} w(\gamma)}{k(\gamma)} (1 + r(\gamma)), \quad |r(\gamma)| \leq C e^{- \epsilon l(\gamma)},$$
where $k(\gamma)$ is defined to be $k$ if $\gamma = k \gamma_p$ for some $\gamma_p \in \cL_p$. So, we have
$$Z^*(s) = Z(s) . Z_{\epsilon} (s), \quad Z_{\epsilon}(s) = \exp \sum_{\gamma \in \cL} \frac{e^{-sl(\gamma)} w(\gamma) r(\gamma)}{k(\gamma)}.$$
By hypothesis, 
$$|Z_{\epsilon} (s)| \leq e^C Z(s+\epsilon).$$
By the absolute convergence of $Z$ for $\mathfrak{R}(s) > \eta$, it follows that $Z_{\epsilon}$ converges absolutely and is therefore analytic for $\mathfrak{R}(s) > \eta - \epsilon$. This implies (1). 
The statement (2) follows immediately from the preceding Lemma. 
\end{proof} 

\subsection{Proof of theorem \ref{theo:WZeta2}}
Let 
$$Z (s) := \exp \sum_{\gamma \in \cL} \frac{e^{-s l(\gamma)} w(\gamma)}{k(\gamma)},$$
where $\cL$ is the set of all closed geodesics and 
$$w(\gamma) = \exp\left(\int_{\gamma} \frac{-W_{SBR}}{2} \right).$$
Similarly, we write
$$\tilde{Z} (s) = \exp \sum_{\gamma \in \cL} \frac{e^{-s l(\gamma)} p(\gamma)}{k(\gamma)} , \mbox{ 
where }
p(\gamma) = |\det(I - \mathcal{P}_{\gamma})|^{-1/2}.$$
If the dimension of the manifold is $n+1$, and the sectional curvatures are bounded above by $-a^2$, then $\cP_{\gamma}$ has expanding eigenvalues $\lambda_1, \ldots, \lambda_n,$ and contracting eigenvalues $\lambda_{n+1}, \ldots, \lambda_{2n},$ and
$$|\det(I - \cP_\gamma)| = \prod_{1} ^{2n} |1 - \lambda_i| = \prod_{i=1} ^n |\lambda_i| \prod_{j=1} ^n \left|1 - \frac{1}{|\lambda_j|} \right| \prod_{k=n+1} ^{2n} |1 - \lambda_k|.$$
By Theorem 3.9.1 of \cite{Kli82}, 
$$|\lambda_i|^{-1} \leq e^{-a l(\gamma)}, \quad i=1, \ldots, n,$$
and 
$$|\lambda_i| \leq e^{-a l(\gamma)}, \quad i=n+1, \ldots 2n.$$
Therefore, there exists a constant $C>0$ such that 
$$|\det(I - \cP_\gamma)| = \prod_{i=1} ^n |\lambda_i|(1+R(\gamma)) \quad |R(\gamma)| \leq Ce^{-a l(\gamma)}, \quad \textrm{ for all } \gamma.$$
By definition of $W_{SBR}$, 
$$\exp\left(\int_{\gamma} W\right) = \prod_{i=1} ^n |\lambda_i|.$$
Thus, it also follows that 
$$p(\gamma) = w(\gamma)(1+r(\gamma)), \quad |r(\gamma)| \leq C e^{-a l(\gamma)} \textrm{ for all $\gamma$ with $l(\gamma) \gg 1$.}$$
Applying Theorem \ref{theo:WZeta} to $Z$ implies that it admits a non-vanishing analytic extension to an open neighborhood of 
$$\{ s \in \C : \mathfrak{R}(s) \geq \wp(-W_{SBR}/2)\},$$
with the exception of a simple pole at $\wp(-W_{SBR}/2)$. Moreover, 
\begin{equation} \label{eq:Za} \tilde{Z} (s) = Z(s) Z_a(s), \end{equation} 
where 
$$Z_a (s) = \sum_{\gamma \in \cL} \frac{e^{-s l(\gamma)} w(\gamma) r(\gamma)}{k(\gamma)}.$$
By the proof of Theorem \ref{theo:asyz} and the estimate on $r(\gamma)$, $Z_a$ converges absolutely and is therefore non-vanishing and analytic for $\mathfrak{R}(s) > \wp(-W_{SBR}/2) - a$. By Theorem \ref{theo:WZeta}, $Z$ admits a meromorphic extension to 
$$\left\{ \mathfrak{R}(s) > \wp(-W_{SBR}/2) - \frac{\lambda \alpha}{2} \right\},$$
where $\lambda$ is the expansion factor of the geodesic flow on the non-wandering set $\Omega$, and $\alpha$ is the H\"older exponent of $W$. By Proposition \ref{prop:WSBR}, 
$$\alpha = \inf \left\{ \frac{2a}{b}, 1\right\}.$$
Since $a \leq \lambda \leq b$, $\frac{\lambda \alpha}{2} \leq a$. It follows from (\ref{eq:Za}) and the absolute convergence of $Z_a$ for $\mathfrak{R}(s) > \wp(-W_{SBR}/2) - a$ that $\tilde{Z}$ extends meromorphically to 
$$\left\{ \mathfrak{R}(s) > \wp(-W_{SBR}/2) - \frac{\lambda \alpha}{2} \right\}.$$
In case $\wp(-\frac{W_{SBR}}{2} ) > 0$, the counting estimate follows from Theorem \ref{theo:asyz}. 
\qed

The arguments already pointed out in Remark \ref{rk:Opti1} strongly suggest that this extension may be optimal for general conformally compact manifolds, even though we could not provide a full proof of this.


\subsection{Applications}
We can apply the meromorphic extension of the dynamical zeta function to show that the entropy of the geodesic flow on a convex co-compact manifold changes analytically under an analytic perturbation of the original metric generalizing the work of A. Katok, G. Knieper, M. Pollicott and H. Weiss \cite{KKPW89}. Recall that for a real-analytic Riemannian manifold $(M,g)$, an \emph{analytic perturbation} of $g$ is a family $(g_\alpha)_{\alpha\in(-\epsilon,\epsilon)}$ for some $\epsilon>0$ such that the map $\alpha\mapsto g_\alpha$ is (real-)analytic. 

\begin{theorem}[Analyticity of the entropy]\label{theo:AnalEnt}
Let $(M,g)$ be a real-analytic convex co-compact manifold with pinched negative curvature, and $(g_\alpha)_{\alpha\in(-\epsilon,\epsilon)}$ be an analytic perturbation of $g$ by metrics of negative sectional curvatures. Then the topological entropy $h(g_\alpha)$ of the geodesic flow on $(M,g_\alpha)$ is an analytic function of $\alpha$.
\end{theorem}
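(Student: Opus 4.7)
The plan is to follow the strategy of Katok--Knieper--Pollicott--Weiss \cite{KKPW89}, combined with our extension of the dynamical zeta function to the convex co-compact setting. By Corollary \ref{cor:DZeta}, for each $\alpha$ in a small neighborhood of $0$, the dynamical zeta function $Z_\alpha(s)$ of $(M,g_\alpha)$ admits a meromorphic extension to a half plane strictly larger than $\{\mathfrak{R}(s)>h(g_\alpha)\}$, and $s=h(g_\alpha)$ is its unique simple pole in a complex neighborhood of $\{\mathfrak{R}(s)\geq h(g_\alpha)\}$. Hence it suffices to show that $(\alpha,s)\mapsto Z_\alpha(s)$ is jointly analytic in $(\alpha,s)$ near $(0,h(g_0))$, and then to apply the analytic implicit function theorem to locate this simple pole as an analytic function of $\alpha$.

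To obtain the joint analyticity I would pass to symbolic dynamics. By Proposition \ref{prop:AxiomA}, each flow $\Phi^{g_\alpha}_t$ is Axiom A with compact, topologically transitive basic set $\Omega(g_\alpha)$. Structural stability of Axiom A flows gives, for $\alpha$ small, H\"older orbit equivalences between $\Phi^{g_\alpha}|\Omega(g_\alpha)$ and $\Phi^{g_0}|\Omega(g_0)$, and Bowen's construction then codes every such flow as a suspension over one and the same subshift of finite type $(\Sigma_A,\sigma)$ with roof function $r_\alpha\in C^\beta(\Sigma_A)$ for some fixed H\"older exponent $\beta>0$. Because $g_\alpha$ is real-analytic in $\alpha$, the geodesic flow $\Phi^{g_\alpha}$ depends real-analytically on $\alpha$; applying the analytic implicit function theorem to a family of local transversal sections (chosen once and for all for $\Phi^{g_0}$ and then used to define the Markov partition) shows that the first-return times, and hence the roof functions $r_\alpha$, form a real-analytic family of elements of $C^\beta(\Sigma_A)$.

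With this in hand, one writes the zeta function via a Ruelle transfer operator:
\[
\mathcal{L}_{s,\alpha}f(\underline{x})=\sum_{\sigma(\underline{y})=\underline{x}}e^{-s\,r_\alpha(\underline{y})}f(\underline{y}),
\]
acting on a suitable Banach space of H\"older functions on $\Sigma_A$. Following Ruelle, Parry--Pollicott and Haydn, one obtains a Fredholm-type identity
\[
Z_\alpha(s)=\det\bigl(I-\mathcal{L}_{s,\alpha}\bigr),
\]
jointly analytic in $(s,\alpha)$ on its domain; by Ruelle's Perron--Frobenius theorem on the transitive basic set $\Omega(g_\alpha)$, the simple pole of $Z_\alpha$ at $s=h(g_\alpha)$ corresponds exactly to the simple, isolated eigenvalue $1$ of $\mathcal{L}_{s,\alpha}$ at that value of $s$. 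Kato's analytic perturbation theory for isolated simple eigenvalues turns the equation ``$1$ is an eigenvalue of $\mathcal{L}_{s,\alpha}$'' into a scalar equation $F(s,\alpha)=0$ with $F$ jointly analytic and $\partial_sF(h(g_0),0)\neq 0$; the analytic implicit function theorem then yields that $\alpha\mapsto h(g_\alpha)$ is real-analytic.

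The main obstacle is establishing analytic dependence of the roof function $r_\alpha$ on $\alpha$, because the orbit equivalence provided by structural stability is merely H\"older and cannot transport $r_0$ analytically. The correct route, sketched above, is not to transport $r_0$, but rather to fix a system of analytic transversal sections for $\Phi^{g_0}$ generating a Markov partition of $\Omega(g_0)$, observe that they remain transversal for $\Phi^{g_\alpha}$ for $\alpha$ small, and express $r_\alpha$ as the first-return time, whose analyticity follows from the analytic dependence of the geodesic flow on initial conditions and on the parameter $\alpha$. Making this construction precise and verifying that the resulting $r_\alpha$ lies in a single H\"older class uniformly in $\alpha$ is the delicate technical point which, once established, reduces the theorem to the purely operator-theoretic implicit function argument above.
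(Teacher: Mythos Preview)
Your approach is essentially the same as the paper's: both reduce the problem to the argument of \cite{KKPW89} by using the meromorphic extension of the zeta function from Corollary~\ref{cor:DZeta} together with the symbolic coding of the flow on the basic set, and both conclude via the analytic implicit function theorem applied to the simple leading eigenvalue of the transfer operator. You have in fact spelled out in more detail than the paper what ``reproduce the proof of \cite{KKPW89} verbatim'' means (fixed transversal sections, analyticity of the roof function, Kato perturbation of the isolated simple eigenvalue), which is helpful.

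There is, however, one preliminary geometric point you use without justification: to invoke Proposition~\ref{prop:AxiomA} and Corollary~\ref{cor:DZeta} for each $g_\alpha$, you need to know that $(M,g_\alpha)$ remains convex co-compact for all small $\alpha$, and in fact that the non-wandering sets $\Omega(g_\alpha)$ lie in a single compact subset of $SM$ independent of $\alpha$. The paper handles this by citing \cite{BriHae99}, Theorem~1.7, p.~401, to deduce that convex co-compactness persists under perturbations keeping the curvature negative, and then observes that all closed geodesics of $(M,g_\alpha)$ for $|\alpha|<\epsilon$ lie in a fixed compact $K\subset M$. Without this step, your appeal to structural stability and to a single Markov partition valid for all nearby flows is not properly grounded. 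A minor quibble: the identity $Z_\alpha(s)=\det(I-\mathcal{L}_{s,\alpha})$ is only schematic; the precise relation between the flow zeta function and the symbolic determinant involves correction factors coming from the overlaps in the Markov coding, but these are analytic and do not affect the location of the leading pole, so the implicit function argument goes through as you describe.
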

\begin{proof}
Let $(M,g)$ be a real-analytic convex co-compact manifold with negative curvatures, and $(g_\alpha)_{\alpha\in(-\epsilon,\epsilon)}$ be an analytic perturbation of $g$. As long as the sectional curvatures of $g_\alpha$ remain strictly negative, the manifold $(M,g_\alpha)$ remains convex co-compact: this follows from Theorem 1.7 p.401 of \cite{BriHae99}. 
 This implies that there exists a compact set $K\subset M$ such that all closed geodesics of $(M,g_\alpha)_{-\epsilon<\alpha<\epsilon}$ are contained in $K$. Moreover, it follows from Section 2 of \cite{Bow73} that the dynamics of the geodesic flow on the non-wandering set of a compact manifold can be encoded by the symbolic dynamics of a \emph{shift over a finite set}. By Corollary \ref{cor:DZeta}, the dynamical Zeta 
 functions of the $(M,g_\alpha)$ extend meromorphically to the half-plane $\Re s>h(g_\alpha)-K$, where $K$ depends smoothly on the metric, with a simple pole in $h(g_\alpha)$. 

Therefore, the proof of Theorem 1 of \cite{KKPW89} can be reproduced {\it verbatim} and hence the map $\alpha\mapsto h(g_\alpha)$ is analytic.
\end{proof}

A convex co-compact hyperbolic 3-manifold $M = \mathbb H^3/\Gamma$ admits a family of analytic deformations, isomorphic to the Teichm\"uller space of its visual boundary (see \cite{Mar07} p.243 and references given there). Theorem \ref{theo:AnalEnt} above can be used to show that along any analytic path of hyperbolic structures on $M$, the entropy is analytic. 

\begin{cor}
Let $M = \mathbb H^3/\Gamma$ be a convex co-compact hyperbolic 3-manifold. Let $(\rho_\alpha)_{\alpha\in(-\epsilon,\epsilon)}$ be an analytic family of convex co-compact faithful discrete representations of $\Gamma$ into $PSL_2(\mathbb C)$, with $\rho_0=id$. Let $M_{\rho_\alpha(\Gamma)} = \mathbb H^3/\rho_\alpha(\Gamma)$ and $g_\alpha$ be the hyperbolic metric on each of these manifolds induced by the covering.
Then the topological entropy $\alpha\mapsto h(g_\alpha)$
is an analytic function.
\end{cor}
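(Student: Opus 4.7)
The plan is to reduce the corollary to Theorem \ref{theo:AnalEnt} by transferring the analytic family of hyperbolic metrics $g_\alpha$ on the deformed manifolds $M_{\rho_\alpha(\Gamma)}$ back to an analytic family of metrics on the fixed manifold $M$. Concretely, I would first construct a family of $\Gamma$-equivariant real-analytic diffeomorphisms $\tilde{f}_\alpha \colon \mathbb{H}^3 \to \mathbb{H}^3$ with $\tilde{f}_0 = \mathrm{id}$ satisfying the intertwining relation $\tilde{f}_\alpha \circ \gamma = \rho_\alpha(\gamma) \circ \tilde{f}_\alpha$ for all $\gamma \in \Gamma$, and depending analytically on $\alpha$ in a sufficiently strong topology. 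For convex cocompact Kleinian groups, the existence of such an analytic family is part of the classical Ahlfors--Bers--Marden deformation theory (see \cite{Mar07} and references therein): an analytic family of equivariant quasiconformal maps on the domain of discontinuity $\Omega_{\rho_0(\Gamma)} \subset \partial\mathbb{H}^3$ is produced from $(\rho_\alpha)$, and then extended equivariantly to the interior (for example via the Douady--Earle barycentric extension), producing the desired $\tilde{f}_\alpha$.

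Each $\tilde{f}_\alpha$ descends to a diffeomorphism $f_\alpha \colon M \to M_{\rho_\alpha(\Gamma)}$, and I would set $\tilde{g}_\alpha := f_\alpha^* g_\alpha$. Since $f_\alpha$ is a diffeomorphism, $(M, \tilde{g}_\alpha)$ is isometric to $(M_{\rho_\alpha(\Gamma)}, g_\alpha)$; in particular $\tilde{g}_\alpha$ is again a hyperbolic metric on $M$ (with sectional curvatures identically equal to $-1$, hence pinched and negative), and the two geodesic flows are conjugate, so their topological entropies on the non-wandering set agree:
$$h(\tilde{g}_\alpha) = h(g_\alpha).$$
By construction $\tilde{g}_0 = g$ is the original hyperbolic metric, which is real-analytic, and the analytic dependence of $\tilde{f}_\alpha$ on $\alpha$ translates into an analytic dependence of the family $(\tilde{g}_\alpha)$ on $\alpha$ in the sense required by Theorem \ref{theo:AnalEnt}.

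Applying Theorem \ref{theo:AnalEnt} to the analytic perturbation $(M, \tilde{g}_\alpha)$ of $(M, g)$ then gives that $\alpha \mapsto h(\tilde{g}_\alpha)$ is real-analytic, and the identification $h(\tilde{g}_\alpha) = h(g_\alpha)$ yields the desired conclusion.

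The main obstacle is the first step: producing an explicit analytic family of $\Gamma$-equivariant diffeomorphisms realizing the deformation, and verifying that the pulled-back metrics depend analytically on $\alpha$ in a strong enough topology (e.g.\ as sections of $S^2 T^*M$ with analytic dependence on a real parameter) for Theorem \ref{theo:AnalEnt} to apply. Rather than redevelop this machinery, I would quote the holomorphic parametrization of the convex cocompact deformation space of $\Gamma$ by an open subset of the character variety, together with the fact that equivariant Douady--Earle extensions of holomorphic motions of the limit set depend holomorphically on parameters; the family $\rho_\alpha$ gives a real-analytic slice of this holomorphic family, producing the required real-analytic family of metrics on $M$.
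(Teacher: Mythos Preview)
Your proposal is correct and follows essentially the same strategy as the paper: pull the hyperbolic metrics $g_\alpha$ back to an analytic family $\tilde g_\alpha$ on the fixed manifold $M$ via equivariant diffeomorphisms, observe that $h(\tilde g_\alpha)=h(g_\alpha)$, and then apply Theorem~\ref{theo:AnalEnt}. The only difference is that where you sketch the Ahlfors--Bers--Marden/Douady--Earle construction of the equivariant analytic family, the paper simply invokes Theorem~4.2 of \cite{Tap10} for the existence of such a family.
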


\begin{proof}
Let $M = \mathbb H^3/\Gamma$ be a convex co-compact hyperbolic 3-manifold, and let $(\rho_\alpha)_{\alpha\in(-\epsilon,\epsilon)}$,
$M_{\rho_\alpha(\Gamma)}$ and $g_\alpha$ be as in the statement above. By Theorem 4.2 of \cite{Tap10}, there exists an analytic family of metrics $\tilde{g}_\alpha$ on $M$ such that for all $\alpha\in(-\epsilon,\epsilon)$, the Riemannian manifolds $(M,\tilde{g}_\alpha)$ and $(M_{\rho_\alpha(\Gamma)}, g_\alpha)$ are isometric. By Theorem \ref{theo:AnalEnt}, the entropy $\alpha\mapsto h(\tilde{g}_\alpha)$ is analytic, which concludes the proof of this corollary.
\end{proof}

\begin{remark}
G. Contreras has shown in \cite{c92} that for compact manifolds with a hyperbolic flow, the pressure function and metric entropy both are $C^r$ in a $C^r$ neighborhood of the flow. His proof extends to convex co-compact manifolds, but does not provide the analyticity of the entropy. The analyticity of the entropy for convex co-compact hyperbolic 3-manifolds had been previously obtained for many special cases in \cite{AndRoch97}.
\end{remark}

\section{Interactions between dynamics and the Laplace spectrum}
Recall the Laplace operator $\Delta$ on an $n+1$ dimensional Riemannian manifold $(M,g)$; with respect to local coordinates $(x_1, \ldots, x_{n+1})$ 
$$\Delta = - \sum_{i,j=1} ^n \sqrt{\det(g)} \frac{\pa}{\pa x_i} g^{ij} \sqrt{ \det(g)} \frac{\pa}{\pa x_j}.$$
Given a complete manifold $(M,g)$, there is a canonical, unique self-adjoint operator (also denoted $\Delta$) on $\cL^2 (M)$ extending the Laplacian on smooth functions with compact support \cite{sull}. 

The spectral theory of conformally compact manifolds was inspired by Lax-Phillips \cite{lap} who studied the Laplacian on convex co-compact hyperbolic manifolds. This work was fundamental to Mazzeo in \cite{m88} and \cite{m91} who proved the following important result for the spectral theory of conformally compact manifolds which we recall below. 
\begin{theorem}[Mazzeo] \label{th:Mazzeo}
Let $(M,g)$ be an $n+1$ dimensional conformally compact manifold. Define 
$\alpha_0^2 := \inf\{ \lim_{p \to \infty} \kappa(p) \},$
where the infimum is taken over all sectional curvatures. Then, the essential spectrum of the Laplacian is absolutely continuous and is $\left[ \frac{\alpha_0^2 n^2}{4}, \infty \right).$
There are no embedded eigenvalues except possibly at $\alpha_0 ^2 n^2 /4$. 
\end{theorem}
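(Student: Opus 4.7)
The plan is to prove this in three stages: locate the essential spectrum, establish its absolute continuity, and rule out embedded eigenvalues above the threshold. Throughout I work near the boundary in product coordinates for a boundary defining function $x$, where the compactified metric takes the form $\bar g = dx^2 + \bar h(x,y)$ so that $g = x^{-2}(dx^2 + \bar h(x,y))$; by Proposition \ref{prop:SecInfty}, the scalar $|dx|^2_{\bar g}\big|_{\pa M}$ controls the limits of sectional curvatures at the corresponding boundary point, and $\alpha_0^2$ is its infimum over $\pa M$.

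Stage one (identifying the essential spectrum). In the coordinates above, a direct computation gives
$$\Delta_g = -(x\pa_x)^2 + n|dx|^2_{\bar g}\, x\pa_x + x^2 \Delta_{\bar h(x)} + x\cdot(\text{lower order}),$$
so near any boundary point $p_\infty$ the operator $\Delta_g$ is a compact perturbation (in the resolvent sense) of a hyperbolic model of curvature $-|dx|^2_{\bar g}(p_\infty)$. I would decompose $M$ into a compact core $K_R = \{x \geq R\}$ and a collar $\{x < R\}$, then compare $\Delta_g$ on the collar with a product hyperbolic model built out of the limiting metric at the boundary. By the Rellich-type compactness that comes from the $0$-structure (gradients of functions in the form domain gain a factor of $x$), the difference of resolvents is compact. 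Weyl's theorem then yields that the essential spectrum of $\Delta_g$ equals that of the model. A separation of variables on the model, together with the non-negativity of $-\Delta_{\bar h}$, produces a continuous spectrum $[\alpha_0^2 n^2/4,\infty)$, the factor $n^2/4$ arising as the indicial polynomial of $-(x\pa_x)^2 + \alpha_0^2 n (x\pa_x)$ after the substitution $x = e^{-t}$.

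Stage two (absolute continuity). Following the Froese--Hislop / Perry strategy, I introduce the conjugate operator $A = \tfrac12(x\pa_x + \pa_x x)$, cut off smoothly to a collar neighborhood of $\pa M$. Using the normal form of $\Delta_g$ from stage one, the commutator $i[\Delta_g,A]$ equals $2(\Delta_g - \alpha_0^2 n^2/4)$ plus terms that are relatively compact with respect to $\Delta_g$ once one accounts for the $x$-gain in the remainder. On every compact interval $J \subset (\alpha_0^2 n^2/4,\infty)$ this yields a Mourre estimate of the form
$$E_J(\Delta_g)\, i[\Delta_g, A]\, E_J(\Delta_g) \geq c\, E_J(\Delta_g) + K,$$
with $c>0$ and $K$ compact. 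Standard Mourre theory then delivers absolute continuity on $J$ and the limiting absorption principle.

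Stage three (no embedded eigenvalues above threshold). If $u \in \cL^2(M)$ satisfied $\Delta_g u = \lambda u$ with $\lambda > \alpha_0^2 n^2/4$, then at any boundary point where $|dx|_{\bar g}^2 = \alpha_0^2$ the indicial equation $\mu(\mu - n\alpha_0^2) = \lambda$ has complex conjugate roots with real part $n\alpha_0^2/2$, so no $\cL^2$-admissible asymptotic expansion of $u$ exists unless $u$ vanishes to infinite order at $\pa M$. A boundary Carleman/Agmon estimate, of the type developed in the $0$-calculus, propagates this vanishing inward, and classical unique continuation forces $u \equiv 0$. At $\lambda = \alpha_0^2 n^2/4$ the two indicial roots coalesce, an $\cL^2$ decaying solution becomes formally possible, and this is exactly why the theorem only excludes embedded eigenvalues strictly above the threshold.

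The main obstacle is stage two. When $|dx|_{\bar g}$ is constant on $\pa M$ (the asymptotically hyperbolic case) the Mourre estimate is essentially the model calculation plus a compact remainder and goes through cleanly. In the fully conformally compact setting the leading coefficient in the radial part varies along $\pa M$, so the commutator $i[\Delta_g,A]$ picks up a multiplicative error $|dx|^2_{\bar g}(y) - \alpha_0^2 \geq 0$ that is not compact; one must exploit this \emph{sign} to absorb it into the positive part of the estimate while isolating the compact remainder via a partition of unity on $\pa M$ that refines on the locus where the infimum is attained. Making this refinement quantitative, so that the Mourre constant $c$ can be taken strictly positive on every compact spectral interval above $\alpha_0^2 n^2/4$, is the delicate technical point.
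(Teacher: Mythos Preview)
The paper does not prove this theorem: it is stated as a known result due to Mazzeo and attributed to \cite{m88} and \cite{m91}, with no proof given in the text. There is therefore no ``paper's own proof'' to compare your proposal against.

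For what it is worth, your outline is broadly in the spirit of Mazzeo's original arguments. Stage one (the Weyl-type identification of the essential spectrum via a model comparison in the $0$-calculus) and stage three (infinite-order vanishing from the indicial analysis followed by unique continuation at infinity) are indeed the ingredients of \cite{m88} and \cite{m91} respectively. Your stage two, however, departs from Mazzeo's method: he does not use a Mourre/commutator estimate but rather obtains absolute continuity from the limiting absorption principle proved via the parametrix construction for the resolvent in the $0$-pseudodifferential calculus. The obstacle you flag---the non-compact multiplicative error $|dx|^2_{\bar g}(y)-\alpha_0^2$ in the commutator when the boundary curvature is variable---is real, and it is not clear that the sign trick you propose can be made uniform on all compact spectral intervals; this is presumably one reason the original proof goes through the microlocal parametrix instead.
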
 

A connection between pure point spectrum and entropy of the geodesic flow was established by D. Sullivan in \cite{Sul79} who showed that, for hyperbolic $(n+1)$-manifolds, the pure-point spectrum $\sigma_{pp}(\Delta)$ is non-empty if and only if the entropy of the geodesic flow $h(g)$ satisfies $h(g)>n/2$. This result was further improved by Guillarmou-Naud, who demonstrated the following \cite{gn}. 

\begin{theorem}[Guillarmou-Naud, 2006]
Let $M = \bbH^{n+1}/\Gamma$ be a convex co-compact hyperbolic manifold whose entropy satisfies $h>n/2$. Then
$$\#\left\{\gamma\in\pgL : l(\gamma)\leq T\right\} = \li(e^{h T}) + \sum_{\beta_{n}(h)<\alpha_i<h}\li(e^{\alpha_i t}) + \calO\left(\frac{e^{\beta_n(h)T}}{T}\right),$$
where $\li(x) = \int_2^xdt/\log(t)$ and $\beta_n(h) = \frac{n}{n+1}(\frac{1}{2}+h)$. The coefficients $\alpha_i$ are in bijection with $\sigma_{pp}(\Delta)$ by $\alpha_i(n-\alpha_i) = \lambda_i\in\sigma_{pp}(\Delta)$.
\end{theorem}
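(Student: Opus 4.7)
The plan is to reduce the question to the Selberg zeta function of $M$ and then extract the asymptotics from an explicit formula of prime-number-theorem type. First, on a real hyperbolic $(n+1)$-manifold, the Poincar\'e map around a primitive closed geodesic $\gamma$ is the linearization of the holonomy of $\gamma$ in $\bbH^{n+1}$; its expanding and contracting eigenvalues are $e^{\pm l(\gamma)}$ with multiplicity $n$. A direct computation therefore identifies, up to elementary analytic factors, the weighted zeta function $\tilde Z(s)$ of Theorem \ref{theo:WZeta2} with a finite product of shifts of the classical Selberg zeta function
\[
Z_S(s) \;=\; \prod_{\gamma\in\pgL}\prod_{m\ge 0}\bigl(1 - e^{-(s+m)l(\gamma)}\bigr)^{c_{n,m}},
\]
where $c_{n,m}$ is an explicit combinatorial multiplicity. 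This reduces the problem to the analysis of the zeros of $Z_S(s)$.

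Second, I would appeal to the spectral interpretation of $Z_S$ on convex co-compact hyperbolic manifolds, established by Patterson--Perry in low dimensions and Bunke--Olbrich more generally: $Z_S$ admits a meromorphic extension to $\C$, and its divisor consists of explicit ``topological'' zeros/poles coming from the Euler factors at $m\ge 1$, together with a ``spectral'' part supported at the resonances of the meromorphically continued resolvent $R(s)=(\Delta-s(n-s))^{-1}$. Resonances in $\{\Re s>n/2\}$ coincide with $L^2$-eigenvalues of $\Delta$ via $\lambda=\alpha(n-\alpha)$, while the Patterson--Sullivan theorem identifies $s=h$ as a simple zero of $Z_S$. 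Consequently, in $\{\Re s\ge n/2\}$ the only non-trivial zeros are $s=h$ and the $\alpha_i$ corresponding to $\lambda_i\in\sigma_{pp}(\Delta)$.

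Third, I would apply the Perron / explicit formula machinery to
\[
\psi(T) \;:=\; \sum_{\gamma\in\gL,\; l(\gamma)\le T} l_p(\gamma) \;=\; \frac{1}{2\pi i}\int_{c-i\infty}^{c+i\infty} \Bigl(-\frac{Z_S'(s)}{Z_S(s)}\Bigr)\frac{e^{sT}}{s}\,ds, \qquad c>h,
\]
and shift the contour of integration to the vertical line $\{\Re s=\beta_n(h)\}$. The residue at the simple zero $s=h$ of $Z_S$ produces the main term $e^{hT}/h$, while the residues at the $\alpha_i$ produce the secondary terms $e^{\alpha_i T}/\alpha_i$. Standard partial summation then converts $\psi$ into the geodesic counting function and replaces each $e^{\sigma T}/\sigma$ by $\li(e^{\sigma T})$, modulo an error of the size of the shifted vertical integral.

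The main obstacle, and the genuinely new analytic input, is the control of this remaining integral: one must prove a polynomial-in-$|\Im s|$ upper bound for $|Z_S'(s)/Z_S(s)|$ along $\{\Re s=\beta_n(h)\}$. Such a bound rests on sharp counting estimates for resonances in vertical boxes, which on convex co-compact hyperbolic manifolds grow like $|\Im s|^{n+1}$, combined with a Hadamard three-lines / Borel--Carath\'eodory interpolation between this growth and the known zero-free region $\{\Re s\ge h\}$. The optimum of this interpolation is precisely $\beta_n(h)=\tfrac{n}{n+1}(\tfrac12+h)$, which explains the shape of the exponent. This resonance-counting step lies beyond the purely dynamical arguments behind Theorem \ref{theo:WZeta2}: it requires scattering-theoretic input (semiclassical analysis of the resolvent on $\bbH^{n+1}/\Gamma$), and I expect it to be by far the hardest part of the proof.
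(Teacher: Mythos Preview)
The paper does not prove this theorem at all: it is quoted in \S 5 as a result of Guillarmou and Naud \cite{gn}, serving as motivation and context for the authors' own Theorem \ref{th:Eigenvalue} and the concluding remarks. There is therefore no ``paper's own proof'' to compare your proposal against.

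That said, your outline is a faithful sketch of the strategy in \cite{gn} itself: the reduction to the Selberg zeta function, the identification of its spectral zeros in $\{\Re s > n/2\}$ with the pure point spectrum via Patterson--Perry, the explicit formula and contour shift, and the crucial polynomial bound on $Z_S'/Z_S$ along vertical lines obtained from resonance counting in boxes of order $|\Im s|^{n+1}$. Your explanation of why the interpolation exponent comes out to $\beta_n(h) = \tfrac{n}{n+1}(\tfrac12 + h)$ is also correct in spirit. The present paper, by contrast, explicitly emphasizes that its own results (Theorems \ref{theo:WZeta}--\ref{theo:WZeta2}) are obtained by purely dynamical methods and do \emph{not} use the scattering-theoretic input you rightly identify as essential for the Guillarmou--Naud refinement; indeed, the authors raise as an open direction whether such spectral information can be recovered dynamically.
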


In the more general case we consider, convex co-compact manifolds with variable curvature, there are few known results for the spectral theory. Our final result is inspired by \cite{Sul79} and the above result of Mazzeo. 

\subsection*{Proof of Theorem \ref{th:Eigenvalue}} 
Let $(M, g)$ be a convex co-compact manifold of dimension $n+1$ whose sectional curvatures $\kappa$ satisfy $-b^2 \leq K_g \leq -a^2<0$. Let $\Gamma$ be the fundamental group of $M$ acting on the universal cover $(\tilde M,g)$ by isometries, and $\Lambda_\Gamma\subset \pa_v \tilde M$ its limit set. Since the fundamental group of $M$ is not Abelian, the Patterson-Sullivan construction (cf. \cite{Ot-Pei}, p. 20) shows that there exists a family of finite positive measures $(\sigma_x)_{x\in \tilde M}$, supported by $\Lambda_\Gamma$, satisfying the following properties:
\begin{enumerate}
\item $(\sigma_x)$ is $\Gamma$-equivariant: for all $\gamma\in \Gamma$ and all $x\in \tilde M$, $\sigma_{\gamma^{-1} x} = \gamma^*\sigma_x$;
\item for all $x,y\in \tilde M$, the measures $\sigma_x$ and $\sigma_y$ are absolutely continuous with respect to each other, and satisfy for all $\xi \in \Lambda_\Gamma$
$$\frac{d\sigma_x}{d\sigma_y}(\xi) = e^{-h_g \mathcal B_\xi(x,y)},$$
where $\mathcal B_\xi(.,.)$ is the \emph{Busemann function} in $\xi$ and $h_g$ the topological entropy of the geodesic flow of $(M,g)$ restricted to its non-wandering set.
\end{enumerate}
Let $o\in \tilde M$ be fixed; we define the map $\tilde \phi : \tilde M\rightarrow(0,\infty)$ by
\begin{equation}\label{eq:phiPS}
\tilde \phi(x) = \int_{\Lambda_\Gamma} d\sigma_x(\xi) = \int_{\Lambda_\Gamma} e^{-h_g \mathcal B_\xi(x,0)}d\sigma_0(\xi).
\end{equation}

It follows from the $\Gamma$-equivariance of $(\sigma_x)$ that $\tilde \phi$ is $\Gamma$ equivariant. Therefore, it induced a well-defined positive map $\phi : M = \tilde M/\Gamma\rightarrow (0,\infty)$. We could not find a complete proof of the following classical lemma.
\begin{lemma}
If the sectional curvatures of $g$ satisfy $-b^2\leq K_g\leq -a^2<0$, then for all $\xi\in \pa_v \tilde M$ and $o\in \tilde M$, the Busemann function $x\mapsto \mathcal B_\xi(x,o)$ satisfies
$$na \leq \Delta_g \mathcal B_\xi(.,x)\leq nb.$$
\end{lemma}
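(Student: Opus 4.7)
The plan is to identify $\Delta_g\mathcal B_\xi(\cdot,x)$ with (up to a sign) the trace of the second fundamental form of the horospheres centered at $\xi$, and then to bound the eigenvalues of this second fundamental form by Riccati comparison with the constant-curvature model spaces of curvatures $-a^2$ and $-b^2$.

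First, I would recall that on a Hadamard manifold with strictly negative sectional curvatures the Busemann function $\mathcal B_\xi(\cdot,x)$ is of class $C^2$ (Heintze-Imhof, Eberlein), with gradient $\nabla_y\mathcal B_\xi(y,x)$ the unit vector field tangent to the geodesics asymptotic to $\xi$. Since $|\nabla \mathcal B_\xi|\equiv 1$, $\mathrm{Hess}(\mathcal B_\xi)$ vanishes in the direction of $\nabla \mathcal B_\xi$ and restricts on its orthogonal complement, which is the tangent space to the horosphere through the point, to a symmetric operator $U$ equal (up to sign, depending on normal conventions) to the second fundamental form of the horosphere. Hence $\Delta_g \mathcal B_\xi(\cdot,x) = \mathrm{tr}(U) = \sum_{i=1}^n \kappa_i$, the sum of the principal curvatures of the horosphere.

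Next, along a geodesic $\gamma(t)$ asymptotic to $\xi$, the operator $U(t) := U|_{\gamma(t)}$ acting on $\gamma'(t)^\perp$ will satisfy the matrix Riccati equation
$$U'(t) + U(t)^2 + R(t) = 0,$$
where $R(t)\colon \gamma'(t)^\perp \to \gamma'(t)^\perp$ is the symmetric Jacobi curvature operator $\eta \mapsto R(\eta,\gamma'(t))\gamma'(t)$. This is obtained by writing $U = J'J^{-1}$ for a basis of stable Jacobi fields along $\gamma$ tangent to horospheres and using the Jacobi equation $J'' + R J = 0$. The pinching $-b^2\le K_g \le -a^2$ translates into $a^2 I \le R(t) \le b^2 I$ for all $t$.

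Finally, I would apply Riccati comparison on the stable branch. The constant-curvature models of curvatures $-a^2$ and $-b^2$ admit the explicit stable solutions $U \equiv aI$ and $U \equiv bI$ respectively, and a matrix Gronwall-type argument applied to the difference of two symmetric stable solutions of the Riccati equation --- see Eberlein's \emph{Geometry of Nonpositively Curved Manifolds} or Heintze-Karcher --- would give $aI \le U(t) \le bI$ along $\gamma$. Taking the trace then yields $na \le \Delta_g \mathcal B_\xi(\cdot,x) \le nb$. The main technical hurdle is the identification and global boundedness of the stable Riccati solution, i.e.\ ensuring that $U(t)$ exists and remains finite for all $t\in\mathbb R$; this is guaranteed by the upper curvature bound $K_g\le -a^2<0$ together with the standard monotone limit construction of the stable Riccati solution, but it does demand some care. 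The remaining steps are essentially bookkeeping.
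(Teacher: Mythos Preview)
Your proposal is correct and follows essentially the same route as the paper: both identify the Hessian of the Busemann function with the covariant derivative of stable Jacobi fields (equivalently, the shape operator of horospheres) and then compare with the constant-curvature models. The only difference is packaging---the paper cites Heintze--Im Hof for the Jacobi-field identification and invokes the Rauch comparison theorem directly, whereas you rewrite the same content as a Riccati equation $U'+U^2+R=0$ for $U=J'J^{-1}$ and apply Riccati comparison; these are two formulations of the same argument.
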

\begin{proof}
Let $\xi\in \pa_v \tilde M$ and $o\in \tilde M$ be fixed. For all $x\in \tilde M$, we denote by $(\gamma_{x,\xi}(t))_{t\in(0,\infty)}$ the geodesic ray (parameterized with unit speed) which starts in $x$ and ends in $\xi$. For all $v\in T_xM$, we write $Y^s_v(t)$ the \emph{stable} Jacobi vector field along $\gamma_{x,\xi}$ with $Y^s_v(0) = v$. A detailed definition of stable Jacobi field can be found in \cite{HeiImHof}, p. 482. It follows from Proposition 3.1 of \cite{HeiImHof} that 
$$\nabla \mathcal B_\xi(.,o)(x) = - \gamma_{x,\xi}'(0) \mbox{ and } \nabla_v \nabla \mathcal B_\xi(.,o)(x) = -(Y^s_v)'(0)$$
for all $v\in T_xM$. Since 
$$\Delta_g\mathcal B_\xi(.,o)(x) = -\mbox{Trace}(v\mapsto \nabla_v \nabla \mathcal B_\xi(.,o))(x),$$
the Rauch Comparison Theorem (cf \cite{Kli82} p. 216) implies:
$na \leq \Delta_g \mathcal B_\xi(.,x)\leq nb.$ \end{proof}
By a straightforward computation, this lemma implies that for all $x\in \tilde M$ and $\xi\in \pa_v \tilde M$, we have
$$h_g(na-h_g) e^{-h_g \mathcal B_{\xi}(x,o)}\leq \Delta_g(e^{-h_g \mathcal B_{\xi}(.,o)})(x)\leq h_g(nb-h_g)e^{-h_g \mathcal B_{\xi}(x,o)}.$$
This implies in particular that on $M$,
$$\Delta \phi(x)\geq h_g(na-h_g) \phi(x).$$ 
Since $\phi$ is positive, Theorem 2.1 of \cite{sull} implies that the bottom of the spectrum of $\Delta_g$ satisfies $\lambda_0(\Delta_g)\geq h_g(na-h_g).$ 

When $h_g\leq \frac{na}{2}$, this lower bound can be improved as follows. For any $\delta\geq h_g$, it is shown in \cite{Ro11} p. 100 that there exists a family of finite positive measures $(\mu^\delta_x)_{x\in \tilde M}$, supported by $\tilde M\cup \pa_v \tilde M$, satisfying the following properties:
\begin{enumerate}
\item $(\mu^\delta_x)$ is $\Gamma$-equivariant: for all $\gamma\in \Gamma$ and all $x\in \tilde M$, $\mu^\delta_{\gamma^{-1} x} = \gamma^*\mu^\delta_x$;
\item for all $x,y\in \tilde M$, the measures $\mu^\delta_x$ and $\mu^\delta_y$ are absolutely continuous with respect to each other, and satisfy for all $\xi\in\tilde M\cup \pa_v \tilde M$
$$\frac{d\mu^\delta_x}{d\mu^\delta_y}(\xi) = e^{-\delta \mathcal B_\xi(x,y)},$$
where $\mathcal B_\xi(x,y) = d_g(x,\xi) - d_g(y,\xi)$if $\xi\in \tilde M$, and $\mathcal B_\xi(.,.)$ is the Busemann function in $\xi$ when $\xi\in \pa_v \tilde M$. 
\end{enumerate}

Moreover, it also follows from the proof of Proposition 3.1 of \cite{HeiImHof} that for all $\xi\in\tilde M\cup \pa_v \tilde M$, we have 
$$\Delta_g \mathcal B_\xi\geq na\mathcal B_\xi.$$
Therefore, the map $\phi^\delta : \tilde M\rightarrow (0,\infty)$ defined by
$$\phi^\delta(x) = \int_{\tilde M\cup \pa_v \tilde M}d\mu^\delta_x(\xi)$$
is positive, $G$-equivariant and satisfies 
$$\Delta_g \phi^\delta\geq \delta(na-\delta) \phi^\delta.$$
Since the map $\delta\mapsto \delta(na-\delta)$ is increasing on $[0,\frac{na}{2}]$, Theorem 2.1 of \cite{sull} implies that we have
$$\lambda_0(\Delta_g)\geq \frac{(na)^2}{4}.$$

The upper bound for $\lambda_0(\Delta_g)$ purely comes from the essential spectrum. It follows from \cite{Eich} that when $(M,g)$ is a convex co-compact $(n+1)$-manifold with infinite volume, with $-b^2\leq -a^2<0$, then the essential spectrum of $M$ contains the half line $(-\frac{(nb)^2}{4}, \infty)$. \qed

Since the upper bound on the bottom of the spectrum only depends on the essential spectrum, it can be improved to $\lambda_0(\Delta_g)\leq \frac{(n\beta)^2}{4}$ if the sectional curvatures satisfy $K_g\geq -\beta^2>-b^2$ on the complement of a compact set. When $M$ is conformally compact, Theorem \ref{th:Mazzeo} of Mazzeo improves this bound.


\subsection{Concluding Remarks}

Let us first remark that Theorem \ref{th:Eigenvalue} is by no mean optimal. In particular, it does not give a dynamical criterion for the existence of an isolated eigenvalue. Such a criterion may come from a further application of the study of the weighted zeta function given in Theorem \ref{theo:WZeta2}

\medskip

The counting estimates for the number of closed geodesics given in the prime orbit theorems can presumably be refined in a similar way to Theorem 1 of \cite{PoSh98}, where M. Pollicott and R. Sharp adapt the work of D. Dolgopiat to negatively curved \emph{compact surfaces}. The results announced in \cite{st} extend this work of Pollicott and Sharp to Axiom A flows which satisfy some non-integrability conditions. These conditions are satisfied by the geodesic flow on complete convex co-compact surfaces and compact manifolds with pinched negative curvature. However, it is still unknown whether the geodesic flow on \emph{convex co-compact manifolds of dimension at least 3} satisfies Stoyanov's non-integrability condition. Stoyanov's work gives an asymptotic expansion for the number of closed geodesics as a sum of exponential terms, which shall be compared to the result of Guillarmou-Naud quoted in our introduction. If the counting estimates for weighted geodesics could be refined, it would presumably lead to conditions for the existence of \emph{several distinct eigenvalues} in the pure point spectrum of the Laplacian.

\end{document}